\newtheorem{theorem}{Theorem}[section]
\newtheorem{proposition}[theorem]{Proposition}
\newtheorem{corollary}[theorem]{Corollary}
\theoremstyle{definition}
\newtheorem{definition}[theorem]{Definition}
\newtheorem{example}[theorem]{Example}
\newtheorem{openproblem}[theorem]{Open problem}
\theoremstyle{remark}
\newtheorem{remark}[theorem]{Remark}
\numberwithin{equation}{section}
\begin{document}
\setcounter{page}{1}

\title[Schatten properties for H\"ormander classes on compact Lie groups]{Schatten-von Neumann properties for H\"ormander classes on compact Lie groups}

\author[D. Cardona]{Duv\'an Cardona}
\address{
  Duv\'an Cardona:
  \endgraf
  Department of Mathematics: Analysis, Logic and Discrete Mathematics
  \endgraf
  Ghent University, Belgium
  \endgraf
  {\it E-mail address} {\rm duvan.cardonasanchez@ugent.be}
  }
  
  \author[M. Chatzakou]{Marianna Chatzakou}
\address{
  Marianna Chatzakou:
  \endgraf
  Department of Mathematics: Analysis, Logic and Discrete Mathematics
  \endgraf
  Ghent University, Belgium
  \endgraf
  {\it E-mail address} {\rm Marianna.Chatzakou@UGent.be}
  }

\author[M. Ruzhansky]{Michael Ruzhansky}
\address{
  Michael Ruzhansky:
  \endgraf
  Department of Mathematics: Analysis, Logic and Discrete Mathematics
  \endgraf
  Ghent University, Belgium
  \endgraf
 and
  \endgraf
  School of Mathematical Sciences
  \endgraf
  Queen Mary University of London
  \endgraf
  United Kingdom
  \endgraf
  {\it E-mail address} {\rm michael.ruzhansky@ugent.be}
  }

\author[J. Toft]{Joachim Toft}
\address{
  Joachim Toft:
  \endgraf
   Department of Mathematics
  \endgraf
  Linnæus University
  \endgraf
  V\"axj\"o-Sweden
  \endgraf
    {\it E-mail address} {\rm joachim.toft@lnu.se }}

\thanks{The authors were supported  by the FWO  O\textnormal{d}ysseus  1  grant  G.0H94.18N:  Analysis  and  Partial Differential Equations, by the Methusalem programme of the Ghent University Special Research Fund (BOF)
(Grant number 01M01021) and by the  \textnormal{d}yCon Project 2015 H2020-694126. Marianna Chatzakou is also supported by the FWO Fellowship grant No 12B1223N.  Michael Ruzhansky is also supported  by EPSRC grant 
EP/R003025/2.
}

     \keywords{Schatten von-Neumann classes, H\"ormander classes, Compact Lie groups, global symbols}
     \subjclass[2020]{35S30, 42B20; Secondary 42B37, 42B35}

\begin{abstract}
Let $G$ be a compact Lie group of dimension $n.$ In this work we characterise the membership of classical pseudo-differential operators on $G$ in the trace class ideal $S_{1}(L^2(G)),$ as well as in the setting of the Schatten ideals $S_{r}(L^2(G)),$  for all $r>0.$ In particular, we deduce Schatten characterisations of
elliptic pseudo-differential operators of $(\rho,\delta)$-type
for the large range $0\leq \delta<\rho\leq 1.$ Additional necessary and sufficient conditions are given in terms of the matrix-valued symbols of the operators, which are global functions on the phase space $G\times \widehat{G},$ with the momentum variables belonging to the unitary dual $\widehat{G}$ of $G$.
%
%
%
%
In terms of the parameters $(\rho,\delta),$ on the torus $\mathbb{T}^n,$ we demonstrate the sharpness of our results showing the existence of atypical operators in the exotic class $\Psi^{-\varkappa}_{0,0}(\mathbb{T}^n),$ $\varkappa>0,$ belonging to all the Schatten ideals. Additional order criteria are given in the setting of classical pseudo-differential operators. We present also some open problems in this setting.
\end{abstract} 

\maketitle

\tableofcontents
\allowdisplaybreaks

\section{Introduction}

\subsection{Outline}

Let $M$ be an orientable compact manifold without boundary with volume element $\textnormal{d}x,$ and let us consider the Hilbert space $L^2(M)=L^2(M,\textnormal{d}x)$. For any $m\in \mathbb{R}$ and for $0\leq \delta<\rho\leq 1,$ let $\Psi^m_{\rho,\delta}(M)$ be the H\"ormander class of continuous linear operators on $C^\infty(M),$ in local coordinates having the form
\begin{equation}
    Af(x)=\smallint_{\mathbb{R}^n}\smallint_{\mathbb{R}^n}e^{2\pi i(x-y,\theta)}a(x,\theta)f(y)\textnormal{d}y\textnormal{d}\theta,
\end{equation}and defined by those symbols $a:=a(x,\theta)\in C^\infty(\mathbb{R}^n\times \mathbb{R}^n)$ satisfying the estimates
\begin{equation}
    |\partial_x^\beta\partial_\xi^\alpha a(x,\theta)|\lesssim_{\alpha,\beta,K}  (1+|\theta|)^{m-\rho|\alpha|+\delta|\beta|}
\end{equation} uniformly in $x$ over compact subsets $K\subseteq \mathbb{R}^n.
$ It is
%
%
%
well-known that the class $\Psi^m_{\rho,\delta}(M)$ is invariant under changes of coordinates if $\rho>1-\delta.$ On the other hand, when $M$ has a group structure compatible with its differential structure, namely, when $M=G$ is a compact Lie group, in \cite{Ruz,RuzhanskyTurunenWirth2014} one  introduced the notion of a global symbol allowing the construction of new classes of pseudo-differential operators $\Psi^m_{\rho,\delta}(G)$ also when $0\leq\rho\leq 1-\delta,$ and providing a new description of the H\"ormander classes in the case where $\rho>1-\delta.$ In this work we investigate necessary and sufficient conditions in order to guarantee the inclusion of the H\"ormander classes on compact Lie groups in the Schatten von-Neumann classes $\mathscr{S}_{r}(L^2(G)),$ namely, we will investigate sharp conditions allowing for  the  inclusion 
\begin{equation}\label{main:problem}
 \Psi^m_{\rho,\delta}(G)\subseteq \mathscr{S}_{r}(L^2(G)).   
\end{equation}
We recall that for any $r>0,$ a compact operator $T :L^2(G)\rightarrow L^2(G) $ belongs to the Schatten von-Neumann ideal $\mathscr{S}_{r}(L^2(G)),$ if the sequence of its singular values $\{s_{n}(T)\}_{n\in \mathbb{N}}$ (formed by the eigenvalues of the operator $\sqrt{T^*T}$) belongs to $\ell^r(\mathbb{N}),$ that is, if  $\sum_{n=1}^\infty s_{n}(T)^r<\infty.$

\subsection{Historical aspects}
%
%
It is well known that an elliptic pseudo-differential operator $A\in \Psi^m_{1,0}(M)$  of order $m\in \mathbb{R},$  belongs to the ideal $S_{r}(L^2(M)),$ $r>0,$ if and only if $m<-n/r.$ Once removed the ellipticity condition the problem of finding  order criteria for classifying pseudo-differential operators on the ideal $S_{r}(L^2(M))$ is still an open problem. However, in the literature, if one considers the problem of classifying pseudo-differential in the Schatten von-Neumann classes $\mathscr{S}_{r}(L^2(\mathbb{R}^n)),$ whose symbols belong to the H\"ormander classes $S^{m}_{\rho,\delta}(\mathbb{R}^n),$ the Beals-Fefferman classes $S_{\Phi,\phi}^{M_1,M_2}(\mathbb{R}^n),$ or the H\"ormander classes $S(m,g)$  the subject becomes more classical. Indeed, in 
\cite{Hormander1979}, H\"ormander observed that the  distribution of the eigenvalues (and then the Schatten properties) of  an elliptic pseudo-differential operator $A=\textnormal{Op}^{w}(a)$ is  encoded in terms of the level sets of the symbol $a.$  Indeed, he showed that the spectral formula
\begin{equation*}
    N(\lambda)\sim \smallint_{a(x,\xi)<\lambda}\textnormal{d}x\textnormal{d}\xi,
\end{equation*}
holds for any $\lambda>0.$ Here, $N(\lambda)=\#\{j:|\lambda_j|\leq \lambda\}$ denotes the spectral function of the operator $A.$ The first results of this type can be traced back to H. Weyl for second order differential operators,
and to R. Courant (see \cite[Page 297]{Hormander1979}). Then, in Theorem 3.9 of  \cite{Hormander1979}, H\"ormander proved the following sufficient condition
\begin{equation}
    m\in L^1(\mathbb{R}^{2n}),\, a\in S(m,g)\Longrightarrow
    \textnormal{Op}^{w}(a) \in \mathscr{S}_{1}(L^2(\mathbb{R}^n)),
\end{equation}
with the metric $g$ and the weight function $m$ satisfying suitable conditions.
In \cite{Hormander1979B}, H{\"o}rmander also
characterised the $L^2$ continuity of Weyl operators
with the symbols in $S(m,g)$ as
\begin{equation}\label{Lp:Condition:RnCont}
\{ \textnormal{Op}^{w}(a) \, : \, a\in S(m,g)\, \} \subseteq
\mathscr{S}_{\infty}(L^2(\mathbb{R}^n))
\ \Longleftrightarrow \ 
m \in L^\infty .
\end{equation}

\par

By adding some additional conditions on $m$ and $g$,
Buzano and Nicola in \cite{BuzanoNicola2004}, extended \eqref{Lp:Condition:RnCont} into
\begin{equation}\label{Lp:Condition:Rn}
\{ \textnormal{Op}^{w}(a) \, : \, a\in S(m,g)\, \} \subseteq
\mathscr{S}_{p}(L^2(\mathbb{R}^n))
\ \Longleftrightarrow \ 
m \in L^p ,
\end{equation}
for every $p\in [1,\infty]$. In \cite{Toft2006}, it is shown that
\eqref{Lp:Condition:RnCont} still holds true without the
additional assumptions on $m$ and $g$ in \cite{BuzanoNicola2004}.

\par

In \cite{BuzanoToft2010} the Schatten characterization
\begin{equation}\label{Lp:Condition:RnContII}
\textnormal{Op}^{w}(a) \in
\mathscr{S}_{p}(L^2(\mathbb{R}^n))
\ \Longleftrightarrow \ 
a \in L^p ,
\end{equation}
provided $a\in S(m,g)$ and $h_g^Nm\in L^p$ for some
$N\ge 0$. Here $h_g\le 1$ is the Planck's function.
For further Schatten properties of pseudo-differential operators
on $\mathbb R^n$, see e.g.
\cite{Robert1978,Daubechis1980,GrochenigHeil,
Simon1992,Sjostrand1994,Toft2007,Toft2019} and for Schatten properties on compact manifolds we refer the reader to the works \cite{Delgado2013,DelgadoRuzhansky2014,DelRuzTrace1111,DelRuzTrace111,Delgado2015,DelgadoRuzhanskyWang,DelgadoRuzhanskyWang2,DelgadoRuzhanskyTokmagambetov2017,DelRuzTrace11,DelgadoRuzhansky2018MS,DelRuzTrace1,ChatzakouDelgadoRuzhansky2021,DelgadoRuzhansky2021,Delgado2022,ChatzakouDelgadoRuzhansky2022,Chatzakou2022,CardonaKumar2021,CardonaDelCorral2020,CardonaDelCorral2020K,CardonaKumar2019,Cardona2019,CardonaDelgadoRuzhanksyLOcalWeyl,CardonaDelgadoRuzhansky2022}.

\subsection{Exotic examples and the main result}
On the other hand, necessary and sufficient conditions of the type \eqref{Lp:Condition:Rn} for non-elliptic operators on compact Lie groups are still  an open problem, as in the case of classical pseudo-differential operators (operators with polyhomogeneous symbols) as well as in the modern setting of the $(\rho,\delta)$-classes on $G,$ (see \cite{Ruz}) allowing the complete range $0\leq \delta<\rho\leq 1.$ Although when $0\leq \rho<\delta\leq 1,$ as we will show, the order condition $m<-n/r$  assuring the membership of an elliptic operator in the class $\mathscr{S}_r(L^2(G))$ is a sharp criterion, the situation changes dramatically  if one considers the borderline $\delta=\rho=0.$ Indeed, in the case of the torus $G=\mathbb{T}^n$ with arbitrary dimension $n$ we have discovered the following strongly atypical situation:
\\

\noindent {\bf $\bullet$} {\it For any $\varkappa>0,$ there exists a {non-elliptic} pseudo-differential operator $A$ in the exotic class  $\Psi^{-\varkappa}_{0,0}(\mathbb{T}^n
)\setminus \Psi^{-\varkappa-\varepsilon}_{0,0}(\mathbb{T}^n
), $ for all $\varepsilon>0,$ that belongs to all the Schatten ideals $\mathscr{S}_r(L^2(\mathbb{T}^n)),$ with $0<r<\infty.$ }
\\[2ex]
We present later this statement  in the form of Theorem \ref{The:atypical:case}.
Here we observe that the classes $\Psi_{0,0}^{m}(G)$ on a compact Lie group $G$ are of interest in PDE when computing inverses of real vector fields $X+c$, where the constant term $c$ belongs to an exceptional set $\mathscr{C}\subseteq i\mathbb{R},$ see \cite[Page 627]{RuzhanskyWirth2015} for details.
%
%

Now, we are going to discuss our main results  and we also will propose some conjectures related to the inclusion in \eqref{main:problem} which is the central  question of this manuscript. 
To continue  let us fix the notation and let us introduce the notion of a (full/global) matrix-valued symbol as developed by the third author and Turunen in \cite{Ruz}. One reason for this is that our criteria will be addressed in terms of such matrix-valued symbols. 

Let us consider the unitary dual $\widehat{G}$ of the compact Lie group $G$, which is formed by all the equivalent classes $[\xi]$ of continuous, unitary, and irreducible representations $\xi:G\mapsto \textnormal{U}(\mathbb{C}^{\ell}),$ and let $\ell=d_\xi$ be the dimension of the representation space. 
To any continuous linear operator on $C^\infty(G)$ and then to any pseudo-differential operator $A$ in the class $\Psi^m_{\rho,\delta}(G):=\Psi^m_{\rho,\delta}(G\times \widehat{G}),$ $0\leq \delta\leq \rho\leq 1,$ one can associate a matrix-valued global symbol $$ a:G\times \widehat{G}\rightarrow\bigcup_{[\xi]\in \widehat{G} }\mathbb{C}^{d_\xi\times d_\xi}, \,\,(x,[\xi])\mapsto a(x,[\xi])\in \mathbb{C}^{d_\xi\times d_\xi}, $$ allowing the global quantisation formula
\begin{equation}
    Af(x)=\sum_{[\xi]\in \widehat{G}}\smallint\limits_{G}d_\xi \textnormal{Tr}[\xi(y^{-1}x)a(x,\xi)]f(y)\textnormal{d}y,\,\,\forall f\in C^\infty(G),\,\forall x\in G.
\end{equation}
The problem of finding criteria  to assure the membership of a pseudo-differential operator $A$ in the Schatten classes $\mathscr{S}_{r}(L^2(G)),$ in terms of its  matrix-valued symbol $a:=a(x,[\xi])$  has been a source of intensive mathematical activity for around 10 years, see e.g. \cite{Delgado2013,DelgadoRuzhansky2014,DelRuzTrace1111,DelRuzTrace111,Delgado2015,DelgadoRuzhanskyWang,DelgadoRuzhanskyWang2,DelgadoRuzhanskyTokmagambetov2017,DelRuzTrace11,DelgadoRuzhansky2018MS,DelRuzTrace1,ChatzakouDelgadoRuzhansky2021,DelgadoRuzhansky2021,Delgado2022,ChatzakouDelgadoRuzhansky2022,Chatzakou2022,CardonaKumar2021,CardonaDelCorral2020,CardonaDelCorral2020K,CardonaKumar2019,Cardona2019,CardonaDelgadoRuzhanksyLOcalWeyl,CardonaDelgadoRuzhansky2022}.  Contributing to the previous references, the main results of this work can be summarised in  Theorems \ref{CardonaRuzhanskyChatzakouToft2022Th1}, \ref{CardonaRuzhanskyChatzakouToft2022Th2} and \ref{CardonaRuzhanskyChatzakouToft2022Th3} below where we will use the following notations.
\begin{itemize}
\item We denote by $ \mathfrak{g}$   the Lie algebra  of a compact Lie group $G.$ The mapping  $B(X,Y)=-\textnormal{Tr}[\textnormal{ad}(X)\textnormal{ad}(Y)],$  $X,Y\in\mathfrak{g}, $ is the  Killing form on $\mathfrak{g}\times \mathfrak{g}$  and we denote by  $|| X ||_{g}:=\sqrt{-B(X,X)}$ the corresponding norm on $\mathfrak{g}$ associated to $-B.$\footnote{Since $G$ is a compact Lie group the positive Killing form  $-B:\mathfrak{g}\times \mathfrak{g}\rightarrow \mathbb{C},$ is positive semi-definite, i.e. $-B(X,X)>0,$ $\forall X\in \mathfrak{g}\setminus \{0\}.$}
    \item We denote by $\mathcal{L}_G$  the positive Laplace Beltrami operator on $G,$ and  under the identification $\mathfrak{g}^*\cong \mathfrak{g}$, $\eta\mapsto ||\eta ||^2_g,$ $\eta\in \mathfrak{g}^*\setminus \{0\},  $ denotes its principal symbol.
    
    \item The family $\mathscr{S}_r(L^2(G)),$ $0<r<\infty,$ is formed by the Schatten von Neumann ideals on a compact Lie group $G,$ if $0<r<\infty,$ and for $r=\infty,$ $\mathscr{S}_r(L^2(G))=\mathscr{B}(L^2(G))$ denotes the algebra of all bounded linear operators  on $L^2(G).$
    \item For every $r\in (0,\infty),$ the Schatten norm of a symbol $a(x,[\xi])$ is given by $\Vert a(x,[\xi])\Vert_{\mathscr{S}_r}=\textnormal{Tr}[|a(x,[\xi])|^{r}]^{\frac{1}{r}},$ where $|a(x,[\xi])|:=\sqrt{a(x,[\xi])^*a(x,[\xi])}$ is defined in terms of the functional calculus of matrices. Note that $\Vert a(x,[\xi])\Vert_{\mathscr{S}_2}= \Vert a(x,[\xi])\Vert_{\textnormal{HS}}$ is the standard Hilbert-Schmidt norm of matrices.
    \item For any $1\leq p_1,p_2<\infty,$ the space $L^{p_1}(G,\mathscr{S}_{p_2}(\widehat{G}))$ is defined by those symbols $a:=a(x,[\xi])$ such that 
    $$  \Vert a(\cdot ,\cdot)\Vert_{L^{p_1}(G,\mathscr{S}_{p_2}(\widehat{G}))}=\left(\smallint\limits_G\Vert a(x,\cdot)\Vert_{\mathscr{S}_{p_2}(\widehat{G})}^{p_1}\textnormal{d}x\right)^{\frac{1}{p_1}}<\infty,$$ 
    where $$  \Vert a(x,\cdot)\Vert_{\mathscr{S}_{p_2}(\widehat{G})}= \left(\sum_{[\xi]\in \widehat{G}}d_{\xi}\Vert a(x,[\xi])\Vert_{\mathscr{S}_{p_2}}^{p_2} \right)^{\frac{1}{p_2}}. $$
    \item In terms of the $\ell^p(\widehat{G})$ norm
    \begin{equation}
        \Vert a(x,[\xi])\Vert_{\ell^p(\widehat{G})}=\left(\sum_{[\xi]\in \widehat{G}}d_{\xi}^{p\left(\frac{2}{p}-\frac{1}{2}\right)}\Vert a(x,[\xi])\Vert_{\textnormal{HS}}^{p} \right)^{\frac{1}{p}},\,\,1\leq p<\infty,
    \end{equation} for any $1\leq p_1,p_2<\infty,$ the space $L^{p_1}(G,\ell^{p_2}(\widehat{G}))$ is defined by those symbols $a:=a(x,[\xi])$ such that 
    $$  \Vert a(\cdot,\cdot)\Vert_{L^{p_1}(G,\ell^{p_2}(\widehat{G}))}=\left(\smallint\limits_G\Vert a(x,\cdot)\Vert_{\ell^{p_2}(\widehat{G})}^{p_1}\textnormal{d}x\right)^{\frac{1}{p_1}}<\infty,$$
\end{itemize}we refer the reader to \cite{FischerRuzhanskyBook} for the embeddings between these two classes of spaces. The following three theorems summarise our main results. We start with our characterisation of elliptic operators in Schatten classes.
\begin{theorem}[General symbols]\label{CardonaRuzhanskyChatzakouToft2022Th1} Let $G$ be a compact Lie group of dimension $n,$ let $m\in \mathbb{R},$ $r>0, $ and let $0\leq \delta<\rho\leq 1.$  Consider an elliptic pseudo-differential operator $A\in \Psi^m_{\rho,\delta}(G\times \widehat{G})$. The following conditions are equivalent:
    \begin{itemize}
    \item[(1)] $m<0$ and  $A$ belongs to the Schatten class of order $r>0,$ that is $A\in \mathscr{S}_r(L^2(G));$
    \item[(2)] The Bessel potential of order $m$ belongs to the Schatten class of order $r>0:$ $B_{m}:=(1+\mathcal{L}_G)^{\frac{m}{2}}\in \mathscr{S}_r(L^2(G)). $
    \item[(3)] The matrix-valued symbol of $|A|^{\frac{r}{2}}$ satisfies the following summability condition
    \begin{equation}
        \sum_{[\xi]\in \widehat{G}}d_\xi\smallint\limits_G\|\sigma_{|A|^{\frac{r}{2}}}(x,[\xi])\|_{\textnormal{HS} }^2\textnormal{d}x<\infty;
    \end{equation}
    \item[(4)] The matrix-valued symbol of $A$ satisfies the following summability condition
    \begin{equation}
        \smallint\limits_G\sum_{[\xi]\in \widehat{G}}d_\xi\|\sigma_{A}(x,[\xi])\|_{\mathscr{S}_r }^r\textnormal{d}x<\infty;
    \end{equation}
    \item[(5)] $m<-n/r.$
\end{itemize} 
     Moreover, if $A\in \Psi^m_{\rho,\delta}(G\times \widehat{G})$ is not elliptic and $m<-n/r,$ then we have that  $A\in \mathscr{S}_{r}(L^2(G)).$
\end{theorem}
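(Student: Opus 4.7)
The plan is to prove the chain $(5)\Leftrightarrow (2)\Leftrightarrow (1)\Leftrightarrow (3)$ first, then the equivalence $(5)\Leftrightarrow (4)$ separately, and finally to derive the ``moreover'' statement. The base step $(2)\Leftrightarrow (5)$ comes from an explicit spectral computation. Since $B_m=(1+\mathcal{L}_G)^{m/2}$ acts as the scalar $(1+\lambda_\xi)^{m/2}$ on the $[\xi]$-isotypic piece of $L^{2}(G)$ (total multiplicity $d_\xi^{2}$), one has
\begin{equation*}
\|B_m\|_{\mathscr{S}_r}^{r}=\sum_{[\xi]\in\widehat G}d_\xi^{2}(1+\lambda_\xi)^{mr/2}.
\end{equation*}
Weyl's law for $\mathcal L_G$ gives $\sum_{\lambda_\xi\le\Lambda}d_\xi^{2}\sim c_G\Lambda^{n/2}$, and an Abel summation reduces the series to $\int_1^\infty t^{mr/2+n/2-1}dt$, convergent iff $m<-n/r$.

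For $(1)\Leftrightarrow (2)$, I would use the symbolic calculus of $\Psi^m_{\rho,\delta}(G\times\widehat G)$, closed under composition for $0\le\delta<\rho\le 1$, together with the ideal property of the Schatten classes. The direction $(2)\Rightarrow (1)$ follows from the factorisation $A=(AB_{-m})B_m$: the factor $AB_{-m}\in\Psi^0_{\rho,\delta}(G\times\widehat G)$ is $L^2$-bounded by the standard calculus, so $\mathscr{S}_r$-membership of $B_m$ transfers to $A$. This same argument, requiring no ellipticity, establishes the final ``moreover'' statement. For $(1)\Rightarrow (2)$ I would invoke ellipticity to produce a parametrix $B\in\Psi^{-m}_{\rho,\delta}(G\times\widehat G)$ with $BA=I+R$, $R\in\Psi^{-\infty}$; then $B_m=(B_mB)A-B_mR$, where $B_mB\in\Psi^0_{\rho,\delta}(G\times\widehat G)$ is bounded and $B_mR$ is smoothing, hence in every Schatten ideal.

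The equivalence $(1)\Leftrightarrow (3)$ is Plancherel applied to $T=|A|^{r/2}$: since $T$ is positive self-adjoint, $\mathrm{Tr}(T^{2})=\mathrm{Tr}(|A|^{r})=\|A\|_{\mathscr{S}_r}^{r}$, while Plancherel on $G$ in matrix-symbol form reads $\|T\|_{\mathrm{HS}}^{2}=\sum_{[\xi]}d_\xi\int_G\|\sigma_{T}(x,[\xi])\|_{\mathrm{HS}}^{2}dx$, which is exactly condition (3). Finally, $(5)\Leftrightarrow (4)$ reduces to matching two-sided bounds for the matrix Schatten norm of the symbol: the defining estimates for $\Psi^m_{\rho,\delta}(G\times\widehat G)$ yield $\|\sigma_A(x,[\xi])\|_{\mathrm{op}}\lesssim\langle\xi\rangle^{m}$ with $\langle\xi\rangle=(1+\lambda_\xi)^{1/2}$, so $\|\sigma_A(x,[\xi])\|_{\mathscr{S}_r}^{r}\le d_\xi\langle\xi\rangle^{mr}$; integrating over $G$ and summing against $d_\xi$ gives $(5)\Rightarrow (4)$ via the first paragraph. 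Conversely, ellipticity produces $\|\sigma_A(x,[\xi])^{-1}\|_{\mathrm{op}}\lesssim\langle\xi\rangle^{-m}$ for $\langle\xi\rangle$ large, so applying this to $I_{d_\xi}=\sigma_A(x,[\xi])^{-1}\sigma_A(x,[\xi])$ and taking matrix Schatten norms forces $\|\sigma_A(x,[\xi])\|_{\mathscr{S}_r}\gtrsim d_\xi^{1/r}\langle\xi\rangle^{m}$; (4) then implies $\sum d_\xi^{2}\langle\xi\rangle^{mr}<\infty$, i.e.\ (5).

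The step I expect to be most delicate is the bookkeeping of the dimensional weights $d_\xi$ when passing between scalar operator Schatten norms on $L^{2}(G)$ and matrix Schatten norms of symbols, particularly in the lower bound $\|\sigma_A(x,[\xi])\|_{\mathscr{S}_r}\gtrsim d_\xi^{1/r}\langle\xi\rangle^{m}$ needed for $(4)\Rightarrow (5)$, together with verifying that the parametrix construction and $L^{2}$-boundedness theorem function uniformly across the entire range $0\le\delta<\rho\le 1$.
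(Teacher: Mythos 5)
Your proposal is correct, and its overall architecture (factorisation against Bessel potentials, the ideal property of $\mathscr{S}_r$, and Plancherel for $|A|^{r/2}$) matches the paper's; the steps $(2)\Leftrightarrow(5)$, $(2)\Rightarrow(1)$, $(1)\Leftrightarrow(3)$ and the ``moreover'' part are essentially identical to what the paper does. You differ in two places. First, for $(1)\Rightarrow(2)$ you build a parametrix $B\in\Psi^{-m}_{\rho,\delta}$ and write $B_m=(B_mB)A-B_mR$; the paper instead invokes the global functional calculus to place $(1+|A|^{1/|m|})^{\pm m}$ in $\Psi^{\pm m}_{\rho,\delta}$, factors $B_m=B_m(1+|A|^{1/|m|})^{-m}(1+|A|^{1/|m|})^{m}$, and uses the singular-value comparison $s_j((1+|A|^{1/|m|})^{m})\le s_j(A)$ (which is where the hypothesis $m<0$ in (1) actually enters). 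Your parametrix route is equally valid and does not need $m<0$, but it relies on the existence of parametrices in the exotic range $0\le\delta<\rho\le1$, which is available in the Ruzhansky--Turunen--Wirth calculus. Second, and more substantially, you wire condition (4) into the graph via a direct two-sided estimate $(4)\Leftrightarrow(5)$: the upper bound $\|\sigma_A(x,[\xi])\|_{\mathscr{S}_r}^{r}\le d_\xi\|\sigma_A(x,[\xi])\|_{\mathrm{op}}^{r}\lesssim d_\xi\langle\xi\rangle^{mr}$ and, from ellipticity, $d_\xi^{1/r}=\|I_{d_\xi}\|_{\mathscr{S}_r}\le\|\sigma_A(x,[\xi])^{-1}\|_{\mathrm{op}}\|\sigma_A(x,[\xi])\|_{\mathscr{S}_r}$, which reduces both directions to the convergence of $\sum d_\xi^{2}\langle\xi\rangle^{mr}$. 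The paper instead proves $(3)\Leftrightarrow(4)$ by expanding $\sigma_{|A|^{r/2}}(x,[\xi])=|\sigma_A(x,[\xi])|^{r/2}+r_A(x,[\xi])$ with a remainder of order $\tfrac{mr}{2}-(\rho-\delta)$ coming from the functional calculus. Your version is more elementary in that it avoids the asymptotic expansion for $\sigma_{|A|^{r/2}}$ altogether, at the cost of using ellipticity of the matrix symbol pointwise in $x$; both are legitimate, and your dimensional bookkeeping ($d_\xi^{1/r}$ from the identity matrix, the extra $d_\xi$ from summing against $d_\xi$) is exactly right.
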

As for classical operators on compact Lie groups we have the following result.
\begin{theorem}[Classical symbols]\label{CardonaRuzhanskyChatzakouToft2022Th2}
Let $G$ be a compact Lie group of dimension $n,$ let $m\in \mathbb{R},$ $r>0, $ and let $0\leq \delta<\rho\leq 1.$    Let $A\in \Psi^{m}_{1,0}(G)$ be a classical  pseudo-differential operator of order $m$.
\begin{itemize}
    \item[(6)] If $r\in [1,\infty)\cap \mathbb{Z},$ then
$A\in \mathscr{S}_r(L^2(G))$ if and only if $m<-n/r.$ For $r=\infty,$     $A\in \mathscr{B}(L^2(G))$ if and only if $m\leq 0.$ 
\item[(7)] If $r\in (1,\infty)\setminus \mathbb{Z},$ and   $A\in \mathscr{S}_{r}(L^2(G)),$ then $m\leq -n/r.$ Moreover, if $A$ is elliptic then one has the strict inequality $m<-n/r.$
\end{itemize}
Additionally, consider the subclass $\Psi_0^m(G)$ in  $\Psi_{cl}^m(G)$  of
operators with homogeneous symbols of
order $m.$ Then
the following conditions are equivalent:
\begin{itemize}
\item[(8)] $\Psi ^m_{cl}(G)\subseteq \mathscr{S}_r(L^2(G)),$ $r\in (0,\infty]$;\

\item[(9)] $\Psi ^m_0(G)\subseteq \mathscr{S}_r(L^2(G)),$ $r\in (0,\infty]$.
\end{itemize} 
\end{theorem}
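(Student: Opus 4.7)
The plan is to treat items (6)--(9) separately, reducing each to the elliptic characterisation in Theorem \ref{CardonaRuzhanskyChatzakouToft2022Th1} combined with a single spectral ``key lemma'' for classical operators. The sufficient direction $m<-n/r\Rightarrow A\in\mathscr{S}_r(L^2(G))$ in (6) is immediate from the last assertion of Theorem \ref{CardonaRuzhanskyChatzakouToft2022Th1} applied to $\Psi^m_{cl}(G)\subseteq \Psi^m_{1,0}(G)$. For the bounded case $r=\infty$, $m\leq 0$ forces $L^2$-boundedness by Calder\'on--Vaillancourt, while $m>0$ is ruled out by testing on a high-frequency wave packet microlocalised at a point where $a_m\neq 0$.

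The technical heart of the proof is the following classical trace-class criterion, which I will call the \textbf{key lemma}: if $B\in \Psi^{\mu}_{cl}(G)$ has principal symbol $b_\mu\not\equiv 0$ and $B\in \mathscr{S}_1(L^2(G))$, then $\mu<-n$. I would prove it via the Birman--Solomyak singular-value asymptotic for classical operators on a compact $n$-manifold: one has $s_j(B)\asymp j^{\mu/n}$, with the Weyl constant strictly positive whenever the integral $\int_{S^*G}|b_\mu|^{n/|\mu|}\,d\sigma$ is positive, which is automatic here since $b_\mu$ is smooth and $b_\mu\not\equiv 0$ implies $b_\mu\neq 0$ on a nonempty open subset of $S^*G$. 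Consequently $\sum_j s_j(B)$ diverges unless $\mu<-n$. Granted this lemma, the necessary direction of (6) for integer $r\in [1,\infty)$ is a one-line consequence of H\"older's inequality for Schatten ideals: $A\in \mathscr{S}_r(L^2(G))\Rightarrow A^r=A\circ\cdots\circ A\in \mathscr{S}_1(L^2(G))$; since classical operators are closed under composition, $A^r\in \Psi^{rm}_{cl}(G)$ with principal symbol $(a_m)^r\not\equiv 0$, and the key lemma forces $rm<-n$, i.e.\ $m<-n/r$.

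For the non-integer case (7), the elliptic subcase follows at once from the equivalence (1)$\Leftrightarrow$(5) of Theorem \ref{CardonaRuzhanskyChatzakouToft2022Th1}. For a possibly non-elliptic $A\in \Psi^m_{cl}(G)\cap \mathscr{S}_r(L^2(G))$, I would argue by H\"older duality: set $r'=r/(r-1)$ and, for each $\varepsilon>0$, fix an elliptic $E_\varepsilon\in \Psi^{-n/r'-\varepsilon}_{1,0}(G)$, e.g.\ a fractional power of $I+\mathcal{L}_G$. By Theorem \ref{CardonaRuzhanskyChatzakouToft2022Th1}, $E_\varepsilon\in \mathscr{S}_{r'}(L^2(G))$, so H\"older gives $AE_\varepsilon\in \mathscr{S}_1(L^2(G))$. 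Since $AE_\varepsilon$ is classical of order $m-n/r'-\varepsilon$ with principal symbol $a_m\cdot e_\varepsilon\not\equiv 0$ (because $e_\varepsilon$ is nowhere vanishing on $T^*G\setminus 0$ and $a_m\not\equiv 0$), the key lemma yields $m<-n/r+\varepsilon$, and letting $\varepsilon\to 0^+$ gives $m\leq -n/r$.

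Finally, for (8)$\Leftrightarrow$(9), the implication (8)$\Rightarrow$(9) is trivial because $\Psi^m_0(G)\subseteq \Psi^m_{cl}(G)$. For the converse I would test the hypothesis $\Psi^m_0(G)\subseteq \mathscr{S}_r(L^2(G))$ on an elliptic homogeneous operator $E_0\in \Psi^m_0(G)$ (existence e.g.\ via a homogeneous symbol that is nowhere vanishing on $T^*G\setminus 0$, modelled on a suitably truncated fractional Laplacian); Theorem \ref{CardonaRuzhanskyChatzakouToft2022Th1} applied to this elliptic $E_0\in \mathscr{S}_r(L^2(G))$ forces $m<-n/r$ (respectively $m\leq 0$ when $r=\infty$), and the last assertion of Theorem \ref{CardonaRuzhanskyChatzakouToft2022Th1} then yields the whole inclusion $\Psi^m_{cl}(G)\subseteq \mathscr{S}_r(L^2(G))$. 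The only substantive obstacle in the whole plan is the key lemma, since it demands a Weyl-type singular-value asymptotic in the non-elliptic classical setting; the delicate point is to verify that the Birman--Solomyak constant is genuinely positive under the mild hypothesis $b_\mu\not\equiv 0$, which is where the smoothness of the principal symbol is crucially used.
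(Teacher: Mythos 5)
Your proposal is correct in substance, but it replaces the paper's key technical ingredient by a different (and stronger) external theorem, so the two arguments are worth contrasting. The paper's route to the trace-class obstruction is Proposition \ref{Invariance:trace}: using the authors' local Weyl formula, the partial traces $\sum_{|\xi|\le\lambda}d_\xi\smallint_G\operatorname{Tr}[\sigma_{\tilde A}(x,\xi)]\,\textnormal{d}x$ grow like $C_{n,\tilde A}\lambda^n$ where the leading constant is the \emph{average} $\textnormal{Av}[\sigma_{loc,A}]$ of the principal symbol over $T^*\mathbb{S}(G)$. Since that average can vanish for a nonzero symbol, the paper must add the hypothesis \eqref{Average:symbol} and then remove it afterwards (Theorem \ref{Invariance:trace2}) by perturbing $A$ with a negative power of $\mathcal{L}_G$ so as to make the average nonzero. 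Your key lemma instead invokes the Birman--Solomyak singular-value asymptotics $s_j(B)\sim c\,j^{\mu/n}$ with $c^{n/|\mu|}\asymp\smallint_{S^*G}|b_\mu|^{n/|\mu|}$, whose leading constant is automatically positive once $b_\mu\not\equiv 0$; this collapses Proposition \ref{Invariance:trace} and Theorem \ref{Invariance:trace2} into one step and needs no averaging or perturbation, at the price of importing a deeper theorem that the paper deliberately avoids (it only uses a first-order trace asymptotic from \cite{CardonaDelgadoRuzhanksyLOcalWeyl}). From there your deductions coincide with the paper's: the $A^r\in\mathscr{S}_1$ composition trick for integer $r$ is exactly Corollary \ref{Classificaton:r}, and the H\"older-duality argument with $E_\varepsilon\in\Psi^{-n/r'-\varepsilon}$ for non-integer $r$ parallels Proposition \ref{Invariance:trace:2} and Theorem \ref{Invariance:trace2r}(B). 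For $(8)\Leftrightarrow(9)$ you also diverge: the paper gives a soft decomposition $\Psi^m_{cl}=\sum_{k<N}\Psi_0^mB^k+\Psi^{m-N}_{cl}$ modulo smoothing operators (Proposition \ref{Prop:RedToHom}), which never determines the critical order, whereas you test $(9)$ on an elliptic homogeneous representative and pass through the threshold $m<-n/r$ of Theorem \ref{CardonaRuzhanskyChatzakouToft2022Th1}; both are valid, yours being shorter but less self-contained in structure. Two small points to make explicit: your key lemma as used for $A^r$ when $m\ge 0$ requires the (standard, but separate) fact that a compact classical operator of nonnegative order must have vanishing principal symbol, since Birman--Solomyak applies to negative orders; and, like the paper, your necessity arguments implicitly assume $a_m\not\equiv 0$, i.e.\ that $A$ is genuinely of order $m$ rather than of lower order.
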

In the next theorem we consider conditions of limited regularity.
\begin{theorem}[Symbols of low regularity]\label{CardonaRuzhanskyChatzakouToft2022Th3}
Let $G$ be a compact Lie group of dimension $n.$   Let us assume that for any $[\xi],$ the symbol $a(\cdot,[\xi])$ is Haar measurable. Then:
\begin{itemize}
    \item[(10)] Assume that a symbol $a\in L^p(G,\ell^p(\widehat{G}))$ for some $1<p<2.$ Then, the corresponding pseudo-differential operator satisfies $A\in \mathscr{S}_{p'}(L^2(G)),$ where $p'=p/(p-1).$
    \item[(11)] Assume that the matrix-valued symbol $a=a(x,[\xi])$ satisfies the regularity condition
\begin{equation}
    \Vert (1+\mathcal{L}_G)^{\frac{N}{2}}\sigma_A(x,\cdot)\Vert_{L^1(G,\mathscr{S}_p(\widehat{G}))}=\smallint\limits_G\Vert (1+\mathcal{L}_G)^{\frac{N}{2}}\sigma_A(x,\cdot)\Vert_{\mathscr{S}_p(\widehat{G})}\textnormal{d}x<\infty
\end{equation}where $N>n.$ Then $A\in \mathscr{S}_{p}(L^2(G))$ provided that $1\leq p<\infty.$ 
\end{itemize}
\end{theorem}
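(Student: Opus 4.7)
For part (10), the plan is to realise the quantization map $a \mapsto A = T_a$ as a linear operator on symbols and prove the statement by complex interpolation between two endpoint bounds. At $p = 2$, the identification $\ell^2(\widehat{G}) = \mathscr{S}_2(\widehat{G})$ together with the Plancherel theorem on $G$ yields the isometry
\[
\|A\|_{\mathscr{S}_2(L^2(G))}^2 \;=\; \int_G \sum_{[\xi] \in \widehat{G}} d_\xi \, \|a(x,[\xi])\|_{\textnormal{HS}}^2 \, \textnormal{d}x \;=\; \|a\|_{L^2(G,\,\ell^2(\widehat{G}))}^2.
\]
At the endpoint $p = 1$, I would establish the operator-norm estimate $\|A\|_{\mathscr{B}(L^2(G))} \lesssim \|a\|_{L^1(G,\,\ell^1(\widehat{G}))}$ by a kernel estimate on $K_A(x,y) = \sum_{[\xi]} d_\xi \textnormal{Tr}[\xi(y^{-1}x) a(x,[\xi])]$: the trace inequality $|\textnormal{Tr}[U M]| \le \|M\|_{\mathscr{S}_1}$ for unitary $U$, combined with the matrix norm comparison $\|M\|_{\mathscr{S}_1} \le d_\xi^{1/2} \|M\|_{\textnormal{HS}}$, produces exactly the weight $d_\xi^{p(2/p-1/2)}|_{p=1} = d_\xi^{3/2}$ appearing in the definition of $\ell^1(\widehat{G})$. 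Complex interpolation between these endpoints then gives $A \in \mathscr{S}_{p'}(L^2(G))$ whenever $a \in L^p(G, \ell^p(\widehat{G}))$ for $1 < p < 2$.

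For part (11), I would set $\tilde a(x,[\xi]) := (1+\lambda_\xi)^{N/2} \sigma_A(x,[\xi])$, with $\lambda_\xi$ the eigenvalue of $\mathcal{L}_G$ on the representation space of $\xi$, and factor $A = T_{\tilde a} \circ B_{-N}$ where $B_{-N} := (1+\mathcal{L}_G)^{-N/2}$ is the Bessel potential. By Weyl's law for the Laplace--Beltrami operator on the compact Lie group $G$, one has $B_{-N} \in \mathscr{S}_p(L^2(G))$ precisely when $Np > n$, and the hypothesis $N > n$ ensures this for every $p \ge 1$. The Schatten H\"older inequality then gives $\|A\|_{\mathscr{S}_p} \le \|T_{\tilde a}\|_{\mathscr{B}(L^2(G))} \, \|B_{-N}\|_{\mathscr{S}_p}$, reducing the proof to showing that $T_{\tilde a}$ is bounded on $L^2(G)$ under the hypothesis $\tilde a \in L^1(G, \mathscr{S}_p(\widehat{G}))$.

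The main obstacle is precisely this boundedness step: the hypothesis provides only $L^1$-integrability in $x$, whereas the naive Hilbert--Schmidt bound $\|T_{\tilde a}\|_{\mathscr{S}_2} = \|\tilde a\|_{L^2(G, \mathscr{S}_2(\widehat{G}))}$ demands $L^2$-integrability. The plan to close the gap is a pointwise H\"older estimate $|T_{\tilde a} f(x)| \le \|\widehat f\|_{\mathscr{S}_{p'}(\widehat{G})} \, \|\tilde a(x,\cdot)\|_{\mathscr{S}_p(\widehat{G})}$ combined with the Hausdorff--Young inequality on $G$ and the embeddings between the spaces $L^{p_1}(G,\mathscr{S}_{p_2}(\widehat{G}))$ and $L^{p_1}(G,\ell^{p_2}(\widehat{G}))$ recorded in \cite{FischerRuzhanskyBook}. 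Should this direct route prove insufficient across the whole range $1 \le p < \infty$, a natural alternative is to split the weight as $N = N_1 + N_2$ with $N_1, N_2 > n/2$ and write $A = B_{-N_1} \circ T_b \circ B_{-N_2}$ for a suitable intermediate symbol $b$, so that the two Bessel factors are each Hilbert--Schmidt and, through two applications of Schatten H\"older, absorb the deficient $x$-regularity of $\tilde a$.
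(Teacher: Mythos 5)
Your proposal for (10) collapses at the claimed $p=1$ endpoint. The kernel bound $|K_A(x,y)|\le \sum_{[\xi]}d_\xi^{3/2}\|a(x,[\xi])\|_{\textnormal{HS}}=\|a(x,\cdot)\|_{\ell^1(\widehat{G})}$ is correct, but it only yields $\int_G|K_A(x,y)|\,\textnormal{d}y\le \|a(x,\cdot)\|_{\ell^1(\widehat{G})}$, i.e.\ an $L^1_xL^\infty_y$ control of the kernel; a one-sided Schur estimate of this kind does not give $\|A\|_{\mathscr{B}(L^2)}\lesssim \|a\|_{L^1(G,\ell^1(\widehat{G}))}$. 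A rank-one counterexample: take $a(x,[\xi])=f(x)$ supported only on the trivial representation, so that $Ag=f\cdot\smallint_G g$; then $\|A\|_{\mathscr{B}(L^2)}=\|f\|_{L^2}$ while $\|a\|_{L^1(G,\ell^1(\widehat{G}))}=\|f\|_{L^1}$, and these are not comparable on a probability space. The paper avoids this by invoking Russo's criterion $\|A\|_{\mathscr{S}_{p'}}\le(\|K\|_{p,p'}\|K^*\|_{p,p'})^{1/2}$, whose hidden $\mathscr{S}_\infty$ endpoint is the \emph{two-sided} Schur test involving both $K$ and $K^*$; the symmetrisation (implemented there by reducing to self-adjoint $A$ via $\textnormal{Re}(A)$ and $\textnormal{Im}(A)$, then applying Hausdorff--Young in $y$ and Minkowski's integral inequality) is exactly the ingredient your one-sided kernel estimate is missing. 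Your $p=2$ endpoint is fine, but without a valid companion endpoint the interpolation scheme does not produce the theorem.

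For (11) you have misread the hypothesis: $(1+\mathcal{L}_G)^{N/2}$ acts on the \emph{spatial} variable of the symbol, i.e.\ on the function $x\mapsto\sigma_A(x,[\xi])$ (the paper introduces the theorem as assuming ``more regularity in the spatial variable,'' and its proof uses that the Fourier coefficients of $\sigma_A(\cdot,[\xi])$ in the dual variable $\eta$ satisfy $\widehat{\sigma}_{A,sr}([\eta],[\xi])=\langle\eta\rangle^{-N}\widehat{\sigma}_{N,rs}([\eta],[\xi])$). It is not the frequency weight $(1+\lambda_\xi)^{N/2}$, so the factorisation $A=T_{\tilde a}\circ B_{-N}$ with $\tilde a(x,[\xi])=(1+\lambda_\xi)^{N/2}\sigma_A(x,[\xi])$ is not what the hypothesis provides. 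Even granting your reading, the reduction founders on the same obstruction as in (10): $L^1$-integrability of $\tilde a$ in $x$ cannot yield $L^2(G)$-boundedness of $T_{\tilde a}$, and the pointwise H\"older estimate you propose gives at best an $L^2\to L^1$ bound --- a gap you acknowledge but do not close. The paper's actual argument is different in kind: it expands $\sigma_A(x,\cdot)$ in its Fourier series in $x$, writes $A=\sum_{[\eta]}\sum_{r,s}M_{\eta_{rs}}\textnormal{Op}(\widehat{\sigma}_{A,sr}([\eta],\cdot))$ as a sum of multiplication operators composed with left-invariant (Fourier multiplier) operators, bounds the $\mathscr{S}_p$ norm of each invariant piece by the $\mathscr{S}_p(\widehat{G})$ norm of its symbol, and uses the spatial regularity to gain the factor $\langle\eta\rangle^{-N}$ that makes the sum over $[\eta]$ converge via $\sum_{[\eta]}d_\eta^2\langle\eta\rangle^{-N}<\infty$ for $N>n$. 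You would need to adopt this mode-by-mode decomposition (or something equivalent) rather than a single Bessel-potential factorisation.
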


\subsection{Open problems}
In view of the open questions that have arisen in our approach, and based on the active research on this field in the last 10 years, we propose the following open problems.
\begin{openproblem} Let $0\leq \delta<\rho\leq 1.$ Prove (or disprove) that if  $A\in \Psi^m_{\rho,\delta}(G\times \widehat{G})$ is a non-elliptic pseudo-differential operator that belongs to the Schatten class  $ \mathscr{S}_{r}(L^2(G))$ then $m<-n/r.$
\end{openproblem}
\begin{openproblem}Prove (or disprove) that for  $r\in (1,\infty)\setminus \mathbb{Z},$ and with  $A\in \mathscr{S}_{r}(L^2(G))$ being a non-elliptic operator, then the inequality $m\leq -n/r$ in  $\textnormal{(7)}$ of Theorem \ref{CardonaRuzhanskyChatzakouToft2022Th2} can be improved to the strict order estimate $m<-n/r.$    
\end{openproblem}
\begin{remark}We observe that the equivalence $\textnormal{(1)}\Longleftrightarrow \textnormal{(3)} $ in Theorem \ref{CardonaRuzhanskyChatzakouToft2022Th1} was proved by the third author and J. Delgado in \cite{DelgadoRuzhansky2021}.     In particular, in \cite{DelgadoRuzhansky2014} the relation between the spectral trace and the nuclear trace of operators has been investigated for the more general notion of nuclear operators and Grothendieck-Lidskii type formulas.  Further analysis involving criteria in terms of matrix-valued symbols  was also carried out  on compact Lie groups and on arbitrary compact manifolds in  \cite{CardonaRuzhanksySubellipticCalculus,DelgadoRuzhansky2014,DelRuzTrace1111,DelRuzTrace111,DelgadoRuzhanskyWang,DelgadoRuzhanskyWang2,DelgadoRuzhanskyTokmagambetov2017,DelRuzTrace11,DelgadoRuzhansky2018MS,DelRuzTrace1,DelgadoRuzhansky2021}.
\end{remark}
\subsection{Organisation of the manuscript}
This paper is organised as follows.
\begin{itemize}
    \item In Section \ref{Preliminaries} we present the basics of the Fourier analysis on compact Lie groups used here as well as the preliminaries about the pseudo-differential calculus on compact Lie groups in terms of the matrix-valued symbols as developed in \cite{Ruz}. 
    \item Section \ref{Schatten:Properties} will be dedicated to the proof of our main Theorems. More precisely:
\begin{itemize}
    \item[1.] Theorem \ref{CardonaRuzhanskyChatzakouToft2022Th1} is presented later  as Theorem \ref{Schatten:properties:}  of Subsection \ref{rhodeltasection}.
     \item[2.] (6) and (7) of Theorem \ref{CardonaRuzhanskyChatzakouToft2022Th2}  are  proved in Theorem \ref{Invariance:trace2r}  of Subsection \ref{ClassicalSection}.
     \item[3.] The equivalence $\textnormal{(8)}\Longleftrightarrow \textnormal{(9)} $ of   Theorem \ref{CardonaRuzhanskyChatzakouToft2022Th2} is proved in Proposition \ref{Prop:RedToHom} of Subsection \ref{Classical:symbols:section}.
     \item[4.]   Theorem \ref{CardonaRuzhanskyChatzakouToft2022Th3} is proved in Subsection \ref{limited:regularity:section} (see Theorem \ref{Michael:Criterion} and Theorem \ref{Michael:criterion:2}, respectively). 
\end{itemize}
\item Finally, in Subsection \ref{Last:Subsection} we prove that the order of a matrix-valued symbol associated to a classical pseudo-differential operator classifies its operator order. For the proof we use the approach developed in this work for the analysis of Schatten operators that involves the average of their principal symbols  on the co-sphere (see Figure \ref{Average}).
\end{itemize}

\section{Preliminaries}\label{Preliminaries}

\subsection{The Fourier analysis of a compact Lie group} Let $\textnormal{d}x$ be the Haar measure on a compact Lie group $G.$  
  The Hilbert space $L^{2}(G):=L^2(G,\textnormal{d}x)$ will be endowed with
   the inner product $$ (f,g)=\smallint\limits_{G}f(x)\overline{g(x)}\textnormal{d}x.$$   The Peter-Weyl theorem gives a spectral decomposition of $L^2(G)$  in terms of the entries of unitary representations of  $G$. In order to present such a result we will give some preliminaries.
\begin{definition}[Unitary representation of $G$]
    A continuous and unitary representation of  $G$ on $\mathbb{C}^{\ell}$ is any continuous mapping $\xi\in\textnormal{Hom}(G,\textnormal{U}(\ell)) ,$ where $\textnormal{U}(\ell)$ is the Lie group of unitary matrices of order $\ell\times \ell.$ The integer number $\ell=d_{\xi}$ is called the dimension of the representation $\xi.$ 
\end{definition}

\begin{remark}[Irreducible representations]  We recall that:
\begin{itemize} 
    \item  a subspace $L\subseteq \mathbb{C}^{d_\xi}$ is called $\xi$-invariant if for any $x\in G,$ $\xi(x)(L)\subseteq L,$ where $\xi(x)(L):=\{\xi(x)v:v\in L\}.$
    \item The representation $\xi$ is irreducible if its only invariant subspaces are $L=\emptyset$ and $L=\mathbb{C}^{d_\xi},$ the trivial ones.
    \item Any unitary representation $\xi$ is a direct sum of unitary irreducible representations. We denote it by $\xi=\xi_1\otimes \cdots\otimes \xi_k,$ with $\xi_i,$ $1\leq i\leq k,$ being irreducible representations on factors $\mathbb{C}^{d_{\xi_i}}$ that decompose the representation space $$ \mathbb{C}^{d_{\xi}}=\mathbb{C}^{d_{\xi_1}}\otimes \cdots \otimes \mathbb{C}^{d_{\xi_k}} .$$    
\end{itemize}  The notion of {\it equivalent representations} allows us to define an equivalence relation in the family of unitary representations. We recall it in the following definition.
\end{remark}
\begin{definition}[Equivalent representations]
    Two unitary representations $$ \xi\in \textnormal{Hom}(G,\textnormal{U}(d_\xi)) \textnormal{ and  }\eta\in \textnormal{Hom}(G,\textnormal{U}(d_\eta))$$  are equivalent if there exists a linear mapping $S:\mathbb{C}^{d_\xi}\rightarrow \mathbb{C}^{d_\eta}$ such that for any $x\in G,$ $S\xi(x)=\eta(x)S.$ The mapping $S$ is called an intertwining operator between $\xi$ and $\eta.$ The set of all the intertwining operators between $\xi$ and $\eta$ is denoted by $\textnormal{Hom}(\xi,\eta).$
\end{definition}
\begin{remark}[Schur Lemma, 1905]
If $\xi\in \textnormal{Hom}(G,\textnormal{U}(d_\xi)) $ is irreducible, then $\textnormal{Hom}(\xi,\xi)=\mathbb{C}I_{d_\xi}$ is formed by scalar multiples of the identity matrix  $I_{d_\xi}$ of order $d_\xi.$

\end{remark}
\begin{definition}[The unitary dual]
    The relation $\sim$ on the set of unitary representations, which we denote by $\textnormal{Rep}(G),$ and defined by: {\it $\xi\sim \eta$ if and only if $\xi$ and $\eta$ are equivalent representations,} is an equivalence relation. The quotient set
$$
    \widehat{G}:={\textnormal{Rep}(G)}/{\sim}
$$is called the unitary dual of $G.$ Since $G$ is a compact Lie group, $\widehat{G}$ is a discrete set.
\end{definition}
The unitary dual encodes all the Fourier analysis on the group. The Fourier transform is defined as follows.
\begin{definition}[Group Fourier transform]
    If $\xi\in \textnormal{Rep}(G),$ the Fourier transform $\mathscr{F}_{G}$ associates to any $f\in C^\infty(G)$ a matrix-valued function $\mathscr{F}_{G}f$ defined on $\textnormal{Rep}(G)$ as follows
$$ (\mathscr{F}_{G}f)(\xi) \equiv   \widehat{f}(\xi)=\int\limits_Gf(x)\xi(x)^{*}\textnormal{d}x,\,\,\xi\in \textnormal{Rep}(G). $$ 
\end{definition}
\begin{remark}[The Fourier inversion formula on a compact Lie group]
The discrete Schwartz space $\mathscr{S}(\widehat{G}):=\mathscr{F}_{G}(C^\infty(G))$ is the image of the Fourier transform on the class of smooth functions. This operator admits a unitary extension from $L^2(G)$ into $\ell^2(\widehat{G}),$ with 
\begin{equation}
 \ell^2(\widehat{G})=\left\{\phi:\forall [\xi]\in \widehat{G},\,\phi(\xi)\in \mathbb{C}^{d_\xi\times d_\xi}\textnormal{ and }\Vert \phi\Vert_{\ell^2(\widehat{G})}<\infty \right\},  
\end{equation} where
$$  \Vert \phi\Vert_{\ell^2(\widehat{G})}:=\left(\sum_{[\xi]\in \widehat{G}}d_{\xi}\Vert\phi(\xi)\Vert_{\textnormal{HS}}^2\right)^{\frac{1}{2}}. $$
The norm $\Vert\phi(\xi)\Vert_{\textnormal{HS}}=(\textnormal{Tr}(\phi(\xi)^*\phi(\xi)))^{\frac{1}{2}}$ is the standard Hilbert-Schmidt norm of matrices. The Fourier inversion formula takes the form
\begin{equation}
    f(x)=\sum_{[\xi]\in \widehat{G}} d_{\xi}\textnormal{Tr}[\xi(x)\widehat{f}(\xi)],\forall f\in L^1(G), \forall x\in G,
\end{equation}where the summation is understood in the sense that from any equivalence class $[\xi]$ we choose one (any)  unitary representation.      
\end{remark}

\subsection{The quantisation formula} Let  $A:C^\infty(G)\rightarrow C^\infty(G)$ be a continuous linear operator with respect to the standard Fr\'echet structure on $C^\infty(G).$ There is a way of associating to the operator $A$ a matrix-valued function $\sigma_A$ defined on the non-commutative phase space $G\times \widehat{G}$ to rewrite the operator $A$ in terms of the Fourier inversion formula and in terms of the Fourier transform. Such a expression is called the quantisation formula. To introduce it we require the following definition.
\begin{definition}[Right convolution kernel of an operator]
 The Schwartz kernel theorem associates to $A$ a kernel/distribution $K_A\in \mathscr{D}'(G\times G)$ such that
$$   Af(x)=\int\limits_{G}K_{A}(x,y)f(y)\textnormal{d}y,\,\,f\in C^\infty(G).$$ The distribution defined via $R_{A}(x,xy^{-1}):=K_A(x,y)$ that provides the convolution identity
$$   Af(x)=\int\limits_{G}R_{A}(x,xy^{-1})f(y)\textnormal{d}y,\,\,f\in C^\infty(G),$$
is called the right-convolution kernel of $A.$   
\end{definition}

\begin{remark}[The quantisation formula]
 Now, we will associate  a global symbol $\sigma_A:G\times \widehat{G}\rightarrow \cup_{\ell\in \mathbb{N}}\mathbb{C}^{\ell\times \ell}$ to $A.$ Indeed,  in view of the identity $Af(x)=(f\ast R_{A}(x,\cdot))(x),$  we get 
 $$ \widehat{Af}(\xi)= \widehat{R}_{A}(x,\xi)\widehat{f}(\xi). $$ Then we have
 that
 \begin{equation}\label{Quantisation:formula}
     Af(x)=\sum_{[\xi]\in \widehat{G}}d_\xi\textnormal{Tr}[\xi(x)\widehat{R}_{A}(x,\xi)\widehat{f}(\xi)],\,f\in C^\infty(G).
 \end{equation} In view of the identity \eqref{Quantisation:formula}, from any equivalence class $[\xi]\in \widehat{G},$ we can choose one and only one irreducible unitary representation $\xi_0\in [\xi],$ such that the matrix-valued function
 \begin{equation}
    \sigma_{A}(x,[\xi])\equiv \sigma_A(x,\xi_0):=\widehat{R}_{A}(x,\xi_0),\,(x,[\xi])\in G\times\widehat{G},
 \end{equation} satisfies that
\begin{equation}\label{Quantisation:formula2:}
     Af(x)=\sum_{[\xi]\in \widehat{G}}d_\xi\textnormal{Tr}[\xi_0(x)\sigma_{A}(x,[\xi])\widehat{f}(\xi_0)],\,f\in C^\infty(G).
 \end{equation}
 The representation in \eqref{Quantisation:formula2:} is independent of the choice of the representation $\xi_0\in \textnormal{Rep}(G)$ from any equivalent class $[\xi]\in \widehat{G}.$ This is a consequence of the Fourier inversion formula.   
 So, we can simply write
 \begin{equation}\label{Quantisation:formula2}
     Af(x)=\sum_{[\xi]\in \widehat{G}}d_\xi\textnormal{Tr}[\xi(x)\sigma_{A}(x,[\xi])\widehat{f}(\xi)],\,\forall f\in C^\infty(G).
 \end{equation}  
\end{remark}
In the following quantisation theorem we observe that the distribution $\sigma_A$ in \eqref{Quantisation:formula2} defined on $G\times \widehat{G}$ is unique and can be written in terms of the operator $A,$ see Theorems  10.4.4 and 10.4.6 of \cite{Ruz}.
\begin{theorem}\label{The:quantisation:thm}
    Let $A:C^\infty(G)\rightarrow C^\infty(G) $ be a continuous linear operator. The following statements are equivalent.
    \begin{itemize}
        \item The matrix-valued distribution $\sigma_A(x,[\xi]):G\times \widehat{G}\rightarrow \cup_{\ell\in \mathbb{N}}\mathbb{C}^{\ell\times \ell}$ satisfies that
        \begin{equation}\label{Quantisation:3}
            \forall f\in C^\infty(G),\,\forall x\in G,\,\, Af(x)=\sum_{[\xi]\in \widehat{G}}d_\xi\textnormal{Tr}[\xi(x)\sigma_{A}(x,[\xi])\widehat{f}(\xi)].
        \end{equation}
        \item We have that $ 
            \forall (x,[\xi])\in G\times \widehat{G},\, \sigma_{A}(x,\xi)=\widehat{R}_A(x,\xi).
        $ \\
        \item  The following identity holds: $ 
          \forall (x,[\xi])\in G\times \widehat{G},\, \sigma_A(x,\xi)=\xi(x)^{*}A\xi(x),$ where $ A\xi(x):=[A\xi_{ij}(x)]_{i,j=1}^{d_\xi}.  
        $ 
    \end{itemize}
\end{theorem}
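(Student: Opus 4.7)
The plan is to establish the cycle of implications $(ii) \Rightarrow (i) \Rightarrow (iii) \Rightarrow (ii)$. The proof that $(ii) \Rightarrow (i)$ is essentially the computation already sketched in the Remark preceding the statement: starting from the right-convolution representation $Af(x)=(f\ast R_A(x,\cdot))(x)$, one applies the group Fourier transform in the convolution variable (using the non-commutative identity $\widehat{f\ast g}(\xi)=\widehat{g}(\xi)\widehat{f}(\xi)$) together with the Fourier inversion formula, and obtains $Af(x)=\sum_{[\xi]}d_\xi\textnormal{Tr}[\xi(x)\widehat{R}_A(x,\xi)\widehat{f}(\xi)]$, which is exactly $(i)$ with $\sigma_A(x,\xi)=\widehat{R}_A(x,\xi)$. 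One should verify here that the summand depends only on $[\xi]$ and not on the representative $\xi_0\in[\xi]$, which I would do by observing that an intertwiner $S\in\textnormal{Hom}(\xi,\eta)$ conjugates both $\widehat{f}$ and $\widehat{R}_A(x,\cdot)$ compatibly, so the trace is unchanged.

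For $(i)\Rightarrow(iii)$ the key step is to test the identity in $(i)$ against the matrix coefficients $f=\xi_{ij}$ of a fixed $[\xi]\in\widehat{G}$. By Schur orthogonality one computes $\widehat{\xi_{ij}}(\eta)=0$ whenever $[\eta]\neq[\xi]$, and $\widehat{\xi_{ij}}(\xi)=d_\xi^{-1}E_{ji}$ where $E_{ji}$ is the standard matrix unit. Substituting into $(i)$ collapses the sum over $\widehat{G}$ to a single term and, after a short trace manipulation using $\textnormal{Tr}[\xi(x)\sigma_A(x,\xi)E_{ji}]=[\xi(x)\sigma_A(x,\xi)]_{ij}$, yields the entrywise identity $A\xi_{ij}(x)=[\xi(x)\sigma_A(x,\xi)]_{ij}$. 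Packaging the indices $i,j$ back into a matrix equation gives $A\xi(x)=\xi(x)\sigma_A(x,\xi)$, and multiplying on the left by $\xi(x)^*$ produces $\sigma_A(x,\xi)=\xi(x)^*A\xi(x)$, which is $(iii)$.

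The remaining implication $(iii)\Rightarrow(ii)$ I would handle via uniqueness: from the derivation of $(ii)\Rightarrow(i)$ already established, the symbol $\widetilde{\sigma}_A(x,\xi):=\widehat{R}_A(x,\xi)$ also satisfies $(i)$, and hence by the argument of the previous paragraph it also satisfies $(iii)$. Thus both $\sigma_A$ from $(iii)$ and $\widetilde{\sigma}_A$ from $(ii)$ equal $\xi(x)^*A\xi(x)$, forcing $\sigma_A(x,\xi)=\widehat{R}_A(x,\xi)$. Alternatively, one can deduce uniqueness independently by noting that if two candidate symbols both satisfy $(i)$, their difference $\tau(x,\xi)$ produces the zero operator; applied to $\xi_{ij}$ and evaluated via the Schur computation above, this forces each entry of $\tau(x,\xi)$ to vanish pointwise in $x$ and for every $[\xi]\in\widehat{G}$.

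The main obstacle is essentially bookkeeping: getting the order of matrix multiplication correct in the non-commutative convolution identity and carefully tracking indices in the Schur orthogonality computation. A secondary subtlety is the independence of the formula from the chosen representative $\xi_0\in[\xi]$; once the covariance of $\widehat{R}_A$ and $\widehat{f}$ under intertwiners is used, this becomes routine, but it must be checked at least once to justify the slight notational abuse of writing $\sigma_A(x,[\xi])$ as $\sigma_A(x,\xi)$.
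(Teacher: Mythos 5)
Your proof is correct and complete; note that the paper itself gives no proof of this statement, deferring to Theorems 10.4.4 and 10.4.6 of \cite{Ruz}, and your argument (the convolution identity $\widehat{f\ast g}=\widehat{g}\,\widehat{f}$ for the implication $(ii)\Rightarrow(i)$, testing against matrix coefficients $\xi_{ij}$ with Schur orthogonality for $(i)\Rightarrow(iii)$, and uniqueness for $(iii)\Rightarrow(ii)$) is essentially the standard route taken there. The index bookkeeping checks out: $\widehat{\xi_{ij}}(\xi)=d_\xi^{-1}E_{ji}$ and $\textnormal{Tr}[ME_{ji}]=M_{ij}$ give $A\xi(x)=\xi(x)\sigma_A(x,\xi)$ as you claim.
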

\begin{remark}
    In view of the quantisations formulae \eqref{Quantisation:formula2} and \eqref{Quantisation:3}, a symbol $\sigma_A$ can be considered as a mapping defined on $G\times \widehat{G}$ or as a mapping  defined on $ G\times \textnormal{Rep}(G)$ by identifying all the values $\sigma_A(x,\xi)=\sigma_A(x,\xi')=\sigma(x,[\xi])$ when $\xi',\xi\in [\xi].$
\end{remark}
\begin{example}[The symbol of a measurable function of the Laplacian] Let $\mathbb{X}=\{X_1,\cdots,X_n\}$ be an orthonormal basis of the Lie algebra $\mathfrak{g}.$ The positive Laplacian on $G$ is the second order differential operator 
\begin{equation}
    \mathcal{L}_G=-\sum_{j=1}^nX_j^2.
\end{equation}The operator $ \mathcal{L}_G$ is independent of the choice of the orthonormal basis $\mathbb{X}$ of $\mathfrak{g},$ see e.g. \cite{Ruz}. The $L^2$-spectrum of $\mathcal{L}_G$ is a discrete set that can be enumerated in terms of the unitary dual $\widehat{G}$ as
\begin{equation}
    \textnormal{Spectrum}(\mathcal{L}_G)=\{\lambda_{[\xi]}:[\xi]\in \widehat{G}\}.
\end{equation}For a Borel function $f:\mathbb{R}^+_0\rightarrow \mathbb{C},$ the right-convolution kernel $R_{f(\mathcal{L}_G)}$ of the operator $f(\mathcal{L}_G)$ (defined by the spectral calculus) is determined by the identity
\begin{equation}
    f(\mathcal{L}_G)\phi(x)=\phi\ast R_{f(\mathcal{L}_G)}(x),\,x\in G,
\end{equation}where
\begin{equation}
  \forall [\xi]\in \widehat{G},\,\,  \widehat{R}_{f(\mathcal{L}_G)}([\xi])=f(\lambda_{[\xi]})I_{d_\xi}.
\end{equation}Then the matrix-valued  symbol of $f(\mathcal{L}_G)$ can be determined e.g. using  Theorem \ref{The:quantisation:thm} as follows
\begin{equation}
    \sigma_{f(\mathcal{L}_G)}(x,\xi)=\widehat{R}_{f(\mathcal{L}_G)}([\xi]).
\end{equation}Since the operator $f(\mathcal{L}_G)$ is left-invariant the symbol  $\sigma_{f(\mathcal{L}_G)}(\xi)=\sigma_{f(\mathcal{L}_G)}(x,\xi)$ does not depend of  $x\in G.$ Of particular interest for the definition of the global H\"ormander classes on $G,$  will be the Japanese bracket function
\begin{equation}
    \langle t\rangle:=(1+t)^{\frac{1}{2}},\,t\geq -1.
\end{equation}In particular the symbol of the operator $ \langle \mathcal{L}_G\rangle=(1+\mathcal{L}_G)^{\frac{1}{2}}$ is given by
\begin{equation}\label{Japanne:bracket:G}
     \sigma_{\langle \mathcal{L}_G\rangle}([\xi]):= \langle \xi \rangle I_{d_\xi}, \,\,\,\langle \xi \rangle:=\langle \lambda_{[\xi]} \rangle.  
\end{equation}

\end{example}
\subsection{Global H\"ormander classes on compact Lie groups} In this section we denote for any linear mapping $U$ on $\mathbb{C}^n$ by $\Vert U\Vert_{\textnormal{op}}$ the standard operator norm
$$ \Vert U\Vert_{\textnormal{op}}= \Vert U\Vert_{\textnormal{End}(\mathbb{C}^n)}:=\sup_{l\neq 0}\|Ul\|_{e}/\|l\|_{e} , $$ where $\|l\|_{e}=(l_1^2+\cdots +l_n^2)^{\frac{1}{2}} $ is the Euclidean norm.

For introducing the H\"ormander classes on compact Lie groups we have to measure the growth of derivatives of symbols in the group variable, for this we use vector fields $X\in T(G).$ To derivate symbols with respect to the discrete variable $[\xi]\in \widehat{G}$ we use difference operators. Before introducing the H\"ormander classes on compact Lie groups we have to define these differential/difference operators. 

\begin{definition}[Left-invariant canonical differential operators] If $\{X_{1},\cdots, X_{n}\}$ is an arbitrary family of left-invariant vector fields, we will denote by
$$  X_{x}^{\alpha}:=X_{1,x}^{\alpha_1}\cdots X_{n,x}^{\alpha_n} $$
an arbitrary canonical differential operator of order $m=|\alpha|.$    
\end{definition}

Also, we have to take derivatives with respect to the ``discrete'' frequency variable $\xi\in \textnormal{Rep}(G).$ To do this, we will use the notion of difference operators introduced in \cite{RuzhanskyWirth2015}. Indeed,  the frequency variable in the symbol $\sigma_A(x,[\xi])$ of a continuous and linear operator $A$ on $C^\infty(G)$ is discrete. This is since $\widehat{G}$ is a discrete space.

\begin{definition}[Canonical difference operators $\mathbb{D}^\alpha$ on the dual $\widehat{G}$] If $\xi_{1},\xi_2,\cdots, \xi_{k},$ are  fixed irreducible and unitary  representations of $G$, which not necessarily belong to the same equivalence class, then each coefficient of the matrix
\begin{equation}
 \xi_{\ell}(g)-I_{d_{\xi_{\ell}}}=[\xi_{\ell}(g)_{ij}-\delta_{ij}]_{i,j=1}^{d_{\xi_\ell}},\, \quad g\in G, \,\,1\leq \ell\leq k,
\end{equation} 
that is each function 
$q^{\ell}_{ij}(g):=\xi_{\ell}(g)_{ij}-\delta_{ij}$, $ g\in G,$ defines a difference operator
\begin{equation}\label{Difference:op:rep}
    \mathbb{D}_{\xi_\ell,i,j}:=\mathscr{F}_G(\xi_{\ell}(g)_{ij}-\delta_{ij})\mathscr{F}^{-1}_G.
\end{equation}
We can fix $k\geq \mathrm{dim}(G)$ of these representations in such a way that the corresponding  family of difference operators is admissible, that is, 
\begin{equation*}
    \textnormal{rank}\{\nabla q^{\ell}_{i,j}(e):1\leqslant \ell\leqslant k \}=\textnormal{dim}(G).
\end{equation*}
To define higher order difference operators of this kind, let us fix a unitary irreducible representation $\xi_\ell$.
Since the representation is fixed we omit the index $\ell$ of the representations $\xi_\ell$ in the notation that will follow.
Then, for any given multi-index $\alpha\in \mathbb{N}_0^{d_{\xi_\ell}^2}$, with 
$|\alpha|=\sum_{i,j=1}^{d_{\xi_\ell}}\alpha_{i,j}$, we write
$$\mathbb{D}^{\alpha}:=\mathbb{D}_{1,1}^{\alpha_{11}}\cdots \mathbb{D}^{\alpha_{d_{\xi_\ell},d_{\xi_\ell}}}_{d_{\xi_\ell}d_{\xi_\ell}}
$$ 
for a difference operator of order $m=|\alpha|$.    
\end{definition}
Now, we are rea\textnormal{d}y for introducing the global H\"ormander classes on compact Lie groups.
\begin{definition}[Global $(\rho,\delta)$-H\"ormander classes in the whole range $0\leq \delta,\rho\leq 1$]
    We say that $\sigma \in {S}^{m}_{\rho,\delta}(G\times \widehat{G})$ if the following symbol inequalities 
\begin{equation}\label{HormanderSymbolMatrix}
   \Vert {X}^\beta_x \mathbb{D}^{\alpha} \sigma(x,\xi)\Vert_{\textnormal{op}}\leqslant C_{\alpha,\beta}
    \langle \xi \rangle^{m-\rho|\gamma|+\delta|\beta|},
\end{equation} are satisfied for all multi-indices $\beta$ and  $\gamma ,$  and for all $(x,[\xi])\in G\times \widehat{G},$ where $ \langle \xi \rangle$ denotes the Japanese bracket function at $\lambda_{[\xi]}\in \textnormal{Spectrum}[\mathcal{L}_G]
$ defined in \ref{Japanne:bracket:G}.
\end{definition}
The class $\Psi^m_{\rho,\delta}(G\times \widehat{G})\equiv\textnormal{Op}({S}^m_{\rho,\delta}(G\times \widehat{G}))$ is defined by those continuous and linear operators on $C^\infty(G)$ such that $\sigma_A\in {S}^m_{\rho,\delta}(G\times \widehat{G}).$ 

In the next theorem we describe some fundamental properties of the global H\"ormander  classes of pseudo-differential operators  \cite{Ruz}.
\begin{theorem}\label{RTcalculus:Group} Let $\rho,\delta\in [0,1]$ be such that $0\leqslant \delta\leqslant \rho\leqslant 1,$ $\rho\neq 1.$  Then  $$\Psi^\infty_{\rho,\delta}(G):=\cup_{m\in \mathbb{R}} \Psi^m_{\rho,\delta}(G) $$   is an algebra of operators stable under compositions and adjoints, that is:
\begin{itemize}
    \item [1.] the mapping $$A\mapsto A^{*}:\Psi^{m}_{\rho,\delta}(G\times \widehat{G})\rightarrow \Psi^{m}_{\rho,\delta}(G\times \widehat{G})$$  is a continuous linear mapping between Fr\'echet spaces. 
\item [2.] The mapping $$ (A_1,A_2)\mapsto A_1\circ A_2: \Psi^{m_1}_{\rho,\delta}(G\times \widehat{G})\times \Psi^{m_2}_{\rho,\delta}(G\times \widehat{G})\rightarrow \Psi^{m_1+m_2}_{\rho,\delta}(G\times \widehat{G})$$ is a continuous bilinear mapping between Fr\'echet spaces.
\end{itemize}Moreover, any operator in the class   $ \Psi^{0}_{\rho,\delta}(G\times \widehat{G})$ admits a  bounded extension from $L^2(G)$ to  $L^2(G).$
\end{theorem}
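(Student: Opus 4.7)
The plan is to derive asymptotic symbolic formulas for the adjoint and composition by Taylor-expanding on the group, and then to obtain the $L^2$-boundedness of $\Psi^0_{\rho,\delta}(G\times\widehat{G})$ by a Calder\'on-Vaillancourt type argument via Cotlar-Stein.

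For the adjoint, I would start from $K_{A^*}(x,y)=\overline{K_A(y,x)}$, translate it into a relation between the right-convolution kernels $R_{A^*}$ and $R_A$, and Taylor-expand in exponential coordinates about $z=e$. Under the Fourier transform, monomials in $\log z$ correspond to the canonical difference operators $\mathbb{D}^\alpha$ of \eqref{Difference:op:rep}, while the associated $x$-derivatives are encoded by the left-invariant operators $X^\alpha_x$. This yields the formal expansion
\begin{equation*}
\sigma_{A^*}(x,\xi)\sim \sum_{\alpha} \frac{1}{\alpha!}\,\mathbb{D}^\alpha X^\alpha_x \bigl[\sigma_A(x,\xi)^*\bigr],
\end{equation*}
with the analogous formula for composition
\begin{equation*}
\sigma_{A_1 A_2}(x,\xi)\sim \sum_{\alpha} \frac{1}{\alpha!}\,\bigl(\mathbb{D}^\alpha \sigma_{A_1}(x,\xi)\bigr)\, X^\alpha_x\sigma_{A_2}(x,\xi).
\end{equation*}
When $\rho>\delta$, the estimates \eqref{HormanderSymbolMatrix} show that the $N$-th partial sum lies in $S^{m_1+m_2-(\rho-\delta)N}_{\rho,\delta}$, so Borel summation in the order produces a genuine symbol in the correct class modulo a smoothing remainder. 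In the borderline case $\rho=\delta<1$, this clean telescoping fails and I would argue directly from the kernel representation, integrating by parts in the group variable encoded by $\mathbb{D}^\alpha$, to show that the composition nonetheless lies in $S^{m_1+m_2}_{\rho,\rho}$. Continuity between Fr\'echet spaces is then automatic since each seminorm of the output is controlled by finitely many seminorms of the inputs.

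For the $L^2$-boundedness of $\Psi^0_{\rho,\delta}(G\times\widehat{G})$ when $\rho\neq 1$, I would perform a dyadic decomposition in frequency: using the functional calculus of $\langle\mathcal{L}_G\rangle$, choose a partition of unity on $\widehat{G}$ and write $A=\sum_{j\geq 0}A_j$ with each $\sigma_{A_j}$ supported where $\langle\xi\rangle\sim 2^j$. The composition calculus applied to $A_jA_k^*$ and $A_j^*A_k$, exploiting the gain $\langle\xi\rangle^{-(\rho-\delta)}$ per iterated $\mathbb{D}^\alpha X^\alpha_x$ when $\rho>\delta$ (and, when $\rho=\delta$, the gain extracted from $\rho<1$ via kernel estimates), produces almost-orthogonality bounds of the form $\|A_jA_k^*\|+\|A_j^*A_k\|\leq C\,2^{-\varepsilon|j-k|}$ for some $\varepsilon>0$. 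The Cotlar-Stein lemma then delivers $\|A\|_{L^2\to L^2}\leq C$ with $C$ depending only on finitely many seminorms of $\sigma_A$.

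The main obstacle is making the asymptotic expansions rigorous: one must bound the Taylor remainder on $G$ uniformly in the frequency variable so that it genuinely defines a symbol in $S^{-\infty}_{\rho,\delta}$, and this demands a Leibniz-type rule for the matrix-valued difference operators $\mathbb{D}^\alpha$ that interacts correctly with the symbol estimates \eqref{HormanderSymbolMatrix}. A secondary difficulty is the exotic endpoint $\rho=\delta$, where the automatic order gain per iteration disappears, forcing one to extract the almost-orthogonality from compactness of $G$ combined with the strict inequality $\rho\neq 1$ rather than from a clean asymptotic formula.
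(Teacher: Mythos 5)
The paper does not prove this theorem at all: it is quoted as background from \cite{Ruz} (with the $L^2$-boundedness coming from \cite{RuzhanskyTurunen2011,RuzhanskyWirth2014}), so there is no in-paper argument to compare yours against. Your plan does reproduce the standard strategy of those references --- the asymptotic formulas $\sigma_{A^*}\sim\sum_\alpha\frac{1}{\alpha!}\mathbb{D}^\alpha X^\alpha_x[\sigma_A^*]$ and $\sigma_{A_1A_2}\sim\sum_\alpha\frac{1}{\alpha!}(\mathbb{D}^\alpha\sigma_{A_1})X^\alpha_x\sigma_{A_2}$ obtained by Taylor expansion at the identity, followed by a Calder\'on--Vaillancourt argument --- and the two expansions you write down are the correct ones.

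However, as a proof the proposal has genuine gaps, and they sit exactly where you yourself locate the ``obstacles.'' First, the theorem is stated for the full range $0\leq\delta\leq\rho\leq1$, $\rho\neq1$, which includes $\rho=\delta$; your order-gain argument (each term of the expansion improving by $(\rho-\delta)N$) closes the calculus only when $\rho>\delta$, and for the endpoint you offer only the phrase ``argue directly from the kernel representation,'' which is not an argument --- for $\rho=\delta$ the stability under composition is itself a nontrivial theorem requiring uniform remainder estimates in the oscillatory-integral/kernel picture. Second, the $L^2$-boundedness step asserts the almost-orthogonality bounds $\|A_jA_k^*\|+\|A_j^*A_k\|\lesssim 2^{-\varepsilon|j-k|}$ without deriving them; when $\rho=\delta$ these bounds cannot come from the composition calculus (there is no order gain), and extracting them from ``$\rho<1$ via kernel estimates'' is precisely the content of the Calder\'on--Vaillancourt theorem being proved, so the plan is circular at its hardest point. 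A complete write-up would need either the quantitative remainder bounds for the Taylor expansion on $G$ (uniform in $\xi$, with the Leibniz rule for $\mathbb{D}^\alpha$ you mention) in the full range $\delta\leq\rho<1$, or an appeal to the finite-regularity $L^2$ theorems of \cite{RuzhanskyTurunen2011,RuzhanskyWirth2014} --- which is in effect what the paper does by citation.
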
 
\begin{remark}With $0\leqslant \delta<\rho\leqslant 1$ such that $\rho\geq 1-\delta,$ the condition $A\in \Psi^m_{\rho,\delta}(G\times \widehat{G})$ where $m\in \mathbb{R},$ is equivalent to the fact that, when microlocalising the operator $A$ into a local coordinate system $U,$ the operator $A$ takes the form
\begin{equation*}
   Af(x)=\smallint\limits_{\mathbb{R}^n}\smallint\limits_{\mathbb{R}^n}e^{2\pi i (x-y)\cdot \xi}a(x,\xi)f(y)\textnormal{d}y\textnormal{d}\xi,\,\, \forall f\in C^\infty_0(U),\,\forall x\in \mathbb{R}^n,
\end{equation*} 
where the function $a=a_U,$ is  such that  for every compact subset $K\subseteq U$ and for all $\alpha,\beta\in \mathbb{N}_0^n,$  the inequalities\begin{equation}\label{seminorms}
  |\partial_{x}^\beta\partial_{\xi}^\alpha a(x,\xi)|\leqslant C_{\alpha,\beta,K}(1+|\xi|)^{m-\rho|\alpha|+\delta|\beta|},
\end{equation} hold uniformly in $(x,\xi)\in K\times \mathbb{R}^n.$  This characterisation of the H\"ormander classes on $G$  was proved in \cite{RuzhanskyTurunenWirth2014}. So, for any compact Lie group $G,$ the classes $\Psi^m_{\rho,\delta}(G\times \widehat{G})$ agree with the ones introduced by H\"ormander \cite{Hormander1985III} when $0\leqslant \delta<\rho\leqslant 1$ and  $\rho\geq 1-\delta.$   
\end{remark}

\section{Schatten Properties}\label{Schatten:Properties}
In this section we analyse the membership of pseudo-differential operators in the Schatten classes on $L^2(G).$

\subsection{Schatten properties for operators. Limited regularity symbols}\label{limited:regularity:section}
In this section we stu\textnormal{d}y the Schatten properties of operators with symbols of limited regularity. 
Without assumptions of regularity we start with the following criterion.

\begin{theorem}\label{Michael:Criterion}
    Assume that for a symbol $\sigma_A$ we have  $\sigma_A\in L^p(G,\ell^p(\widehat{G}))$ for some $1<p<2.$ Then, the corresponding pseudo-differential operator $A$ satisfies that $A\in \mathscr{S}_{p'}(L^2(G)),$ where $p'=p/(p-1).$
\end{theorem}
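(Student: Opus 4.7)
The plan is to realize $A$ as an integral operator on $L^2(G)$ and then to invoke the classical Russo kernel criterion for Schatten membership, after extracting the required mixed-norm information on the kernel from the Hausdorff--Young inequality on the compact Lie group $G$. First I would write $Af(x)=\int_G K_A(x,y)f(y)\,dy$ with kernel $K_A(x,y)=R_A(x,xy^{-1})$, where $R_A(x,\cdot)=\mathscr{F}_G^{-1}[\sigma_A(x,\cdot)]$ is the right-convolution kernel. For each fixed $x\in G$, the Hausdorff--Young inequality on $G$, valid for $1\leq p\leq 2$ with the weighted $\ell^p(\widehat{G})$ norm as defined in the Introduction, gives the pointwise bound
\begin{equation*}
\|R_A(x,\cdot)\|_{L^{p'}(G)}\leq \|\sigma_A(x,\cdot)\|_{\ell^p(\widehat{G})}.
\end{equation*}

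Raising this estimate to the $p$-th power and integrating in $x$, and using the unimodularity of Haar measure to identify $\int_G |K_A(x,y)|^{p'}dy$ with $\|R_A(x,\cdot)\|_{L^{p'}}^{p'}$, Fubini yields the first mixed-norm bound
\begin{equation*}
\left(\int_G\left(\int_G |K_A(x,y)|^{p'}\,dy\right)^{p/p'}dx\right)^{1/p}\leq \|\sigma_A\|_{L^p(G,\ell^p(\widehat{G}))}.
\end{equation*}
I would then establish the symmetric companion bound, with the roles of $x$ and $y$ exchanged, by applying the analogous reasoning to the adjoint $A^*$: its integral kernel is $\overline{K_A(y,x)}$ and, by the pseudo-differential calculus of Theorem \ref{RTcalculus:Group}, it admits a global matrix-valued symbol to which the same Hausdorff--Young/Fubini procedure can be deployed, producing a control on $\left(\int_G(\int_G|K_A(x,y)|^{p'}dx)^{p/p'}dy\right)^{1/p}$ in terms of $\|\sigma_A\|_{L^p(G,\ell^p(\widehat{G}))}$ up to a multiplicative constant.

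With both mixed-norm estimates in hand, I would invoke Russo's classical theorem on integral operators: if a kernel $K$ on $G\times G$ satisfies both symmetric $L^p_xL^{p'}_y$ and $L^p_yL^{p'}_x$ bounds for $1<p<2$, then the associated integral operator belongs to $\mathscr{S}_{p'}(L^2(G))$ with norm controlled by the maximum of the two mixed norms. Applied to $K_A$, this yields $A\in\mathscr{S}_{p'}(L^2(G))$ together with the quantitative estimate $\|A\|_{\mathscr{S}_{p'}}\lesssim \|\sigma_A\|_{L^p(G,\ell^p(\widehat{G}))}$.

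The main obstacle is the companion mixed-norm bound. A direct change of variable $x=zy$ inside the inner integral recasts it as $\int_G|R_A(zy,z)|^{p'}\,dz$, whose dependence on $y$ does not decouple, so Hausdorff--Young cannot be applied to $\sigma_A$ alone; this is what forces the detour through the adjoint calculus and the verification that the symbol of $A^*$ sits in a space comparable in $L^p\ell^p$ norm to that of $\sigma_A$. Once this is cleared, the Schatten conclusion is routine, since Russo's criterion itself is proved by complex interpolation between the isometric Hilbert--Schmidt case ($p=2$, $\mathscr{S}_2$) and a suitable endpoint at $p=1$, both of which are well understood on $L^2(G)$.
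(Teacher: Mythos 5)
Your overall strategy coincides with the paper's: both proofs run through Russo's kernel criterion, obtain the basic mixed-norm bound from the Hausdorff--Young inequality $\Vert\mathscr{F}_G^{-1}\sigma_A(x,\cdot)\Vert_{L^{p'}(G)}\leq\Vert\sigma_A(x,\cdot)\Vert_{\ell^p(\widehat{G})}$ applied fibrewise in $x$, and pass between the two orderings of the mixed norm with Minkowski's integral inequality (legitimate since $p<2<p'$). The divergence is in how the second factor $\Vert K^{*}\Vert_{p,p'}$ in Russo's bound is handled, and this is precisely where your argument has a genuine gap. You propose to control the kernel of $A^{*}$ by running the same Hausdorff--Young argument on the symbol of $A^{*}$, invoking Theorem \ref{RTcalculus:Group} to guarantee that $\Vert\sigma_{A^{*}}\Vert_{L^p(G,\ell^p(\widehat{G}))}$ is comparable to $\Vert\sigma_{A}\Vert_{L^p(G,\ell^p(\widehat{G}))}$. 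But Theorem \ref{RTcalculus:Group} is a statement about the smooth H\"ormander classes $\Psi^m_{\rho,\delta}$; in Theorem \ref{Michael:Criterion} the symbol is only assumed to lie in $L^p(G,\ell^p(\widehat{G}))$ (Theorem \ref{CardonaRuzhanskyChatzakouToft2022Th3} requires nothing beyond Haar measurability in $x$), so no adjoint calculus is available. The difficulty is not cosmetic: from $R_A(x,xy^{-1})=K_A(x,y)$ one computes $R_{A^{*}}(x,z)=\overline{R_A(z^{-1}x,z^{-1})}$, so $\sigma_{A^{*}}(x,\cdot)$ depends on the values of $\sigma_A$ at \emph{all} points of $G$, not just at $x$; without regularity in $x$ there is no reason for the fibrewise $\ell^p(\widehat{G})$ norms, hence the $L^p(G,\ell^p(\widehat{G}))$ norms, to be comparable. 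You correctly flag this as ``the main obstacle'' but leave it unverified, and the tool you cite does not apply at this level of regularity.

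The paper sidesteps the adjoint by a different reduction: first prove the result for self-adjoint $A$, where $K=K^{*}$ and Russo's inequality collapses to $\Vert A\Vert_{\mathscr{S}_{p'}}\leq\Vert K\Vert_{p,p'}$, so a single Hausdorff--Young/Minkowski estimate suffices; then treat general $A$ by writing $A=\textnormal{Re}(A)+i\,\textnormal{Im}(A)$ with $\textnormal{Re}(A)=(A+A^{*})/2$ and $\textnormal{Im}(A)=(A-A^{*})/(2i)$ self-adjoint, and using $\Vert A\Vert_{\mathscr{S}_{p'}}\lesssim\Vert\textnormal{Re}(A)\Vert_{\mathscr{S}_{p'}}+\Vert\textnormal{Im}(A)\Vert_{\mathscr{S}_{p'}}$ together with the corresponding comparison of symbol norms. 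To complete your version you should either adopt this decomposition or supply an actual proof that $\sigma_{A^{*}}$ (equivalently, the symbols of $\textnormal{Re}(A)$ and $\textnormal{Im}(A)$) remains in $L^p(G,\ell^p(\widehat{G}))$ with controlled norm; as written, that claim is asserted rather than proved, and it is the entire content of the step.
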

\begin{proof}
    Let us consider the following criterion due to Russo (see \cite{Russo77}):
    \begin{equation}
        \Vert A\Vert_{\mathscr{S}_{p'}}\leq (\Vert K \Vert_{p,p'}\times \Vert K^{*} \Vert_{p,p'})^{1/2},
    \end{equation}where $K$ is the kernel of $A$ and $K^*$ is the kernel of the adjoint operator $A^*,$ $1<p<2,$ and 
\begin{equation}
    \Vert K\Vert_{p,p'}=\left(\smallint\limits_{G} \left( \smallint\limits_{G}|K(x,y)|^p\textnormal{d}x      \right)^{\frac{p'}{p}}\textnormal{d}y\right)^{\frac{1}{p'}}.
\end{equation}   
For a moment, let us assume that $A$ is self-adjoint. Then, $K=K^*$ and then
\begin{equation}
        \Vert A\Vert_{\mathscr{S}_{p'}}\leq \Vert K \Vert_{p,p'}.
    \end{equation}
Observe that the Hausdorff-Young inequality (see e.g. \cite[Page 69]{FischerRuzhanskyBook}) gives
\begin{align*} \smallint\limits_{G}|K(x,y)|^{p'}\textnormal{d}y &=\Vert y\mapsto \mathscr{F}_{G}^{-1}(\sigma_A(x,\cdot))(xy^{-1})\Vert_{L^{p'}(G;\textnormal{d}y)}^{p'}\\
&=\Vert z\mapsto \mathscr{F}_{G}^{-1}(\sigma_A(x,\cdot))(z)\Vert_{L^{p'}(G;\textnormal{d}z)}^{p'}\leq \Vert \sigma_A(x,\cdot)\Vert_{\ell^{p}(\widehat{G})}^{p'}.     
\end{align*}
Since $p'>2>p$ the Minkowski integral inequality implies that
\begin{align*}
    \Vert K\Vert_{p,p'} &\leq \Vert K\Vert_{p',p}=\left(\smallint\limits_{G} \left( \smallint\limits_{G}|K(x,y)|^{p'}\textnormal{d}y      \right)^{\frac{p}{p'}}\textnormal{d}x\right)^{\frac{1}{p}}\leq \left(\smallint\limits_{G}      \Vert \sigma_A(x,\cdot)\Vert_{\ell^{p}(\widehat{G})}^{p}\textnormal{d}x\right)^{\frac{1}{p}}\\
    &=\Vert\sigma_A\Vert_{L^p(G,\ell^{p}(\widehat{G}))}.
\end{align*} So, we have proved the statement in  Theorem \ref{Michael:Criterion} if $A$ is self-adjoint. Now, in the general case consider the decomposition of $A$ into its real and imaginary part
\begin{equation}
    A=\textnormal{Re}(A)+i\textnormal{Im}(A).
\end{equation}Note that
\begin{equation}
  \forall(x,[\xi])\in G\times \widehat{G},\,  \sigma_A(x,[\xi])=\sigma_{\textnormal{Re}(A)}(x,[\xi])+i\sigma_{\textnormal{Im}(A)}(x,[\xi]),
\end{equation}
that 
\begin{equation}
\forall  x\in G,\,  \|\sigma_A(x,[\xi])\|_{ \ell^{p}(\widehat{G}) }\asymp \|\sigma_{\textnormal{Re}(A)}(x,[\xi])\|_{ \ell^{p}(\widehat{G}) }+\|\sigma_{\textnormal{Im}(A)}(x,[\xi])\|_{ \ell^{p}(\widehat{G}) },
\end{equation} and using that
\begin{equation}
    \| A\|_{\mathscr{S}_{p'}}\asymp  \| \textnormal{Re}(A)\|_{\mathscr{S}_{p'}}+ \| \textnormal{Im}(A)\|_{\mathscr{S}_{p'}},
\end{equation}and the following inequalities (in view of the self-adjointness of $\textnormal{Re}(A)$ and $\textnormal{Im}(A)$) we have that 
\begin{align*}
  \| A\|_{\mathscr{S}_{p'}} &\asymp  \| \textnormal{Re}(A)\|_{\mathscr{S}_{p'}}+ \| \textnormal{Im}(A)\|_{\mathscr{S}_{p'}}\\
  &\lesssim \|\sigma_{\textnormal{Re}(A)}(x,[\xi])\|_{L^p(G,\ell^{p}(\widehat{G})) }+\|\sigma_{\textnormal{Im}(A)}(x,[\xi])\|_{L^p(G,\ell^{p}(\widehat{G}))}\\
 & \asymp   \| \sigma_A\|_{L^p(G,\ell^{p}(\widehat{G})) }.
\end{align*}
The proof of Theorem \ref{Michael:Criterion} is complete.
\end{proof}
\begin{remark} In terms of the $\ell^p$-Schatten norm on $\widehat{G}$ one has the Hausdorff-Young inequality (see e.g. \cite[Page 67]{FischerRuzhanskyBook}) 
\begin{equation}\label{HY:in}
\Vert \widehat{f}\Vert_{\mathscr{S}_{p'}(\widehat{G})}\leq \Vert f\Vert_{L^p(G)}, \,\,1\leq  p\leq 2,    
\end{equation}
where $p'$ is the conjugate exponent of $p.$ However, since the group $G$ is compact one has the following refined versions for this inequality
\begin{equation}\label{HY:in:2}
    \Vert\mathscr{F}_G^{-1}\sigma\Vert_{L^{p'}(G)}\leq \Vert \sigma\Vert_{\ell^{p}(\widehat{G})},\,\,\Vert\widehat{f}\Vert_{\ell^{p'}(\widehat{G})}\leq \Vert f\Vert_{L^p(G)},\,1\leq p\leq 2.
\end{equation}In particular, the second inequality in \eqref{HY:in:2} is sharper than  \eqref{HY:in} because of the embedding $\mathscr{S}_{p'}(\widehat{G})\subset \ell^{p'}(\widehat{G})$ for $2\leq p'\leq \infty,$ see \cite[Page 70]{FischerRuzhanskyBook}. We have used the first inequality in \eqref{HY:in:2}, which is the Hausdorff-Young inequality for the inverse Fourier transform to estimate the inequality $$  \Vert K\Vert_{p,p'} \leq \Vert\sigma_A\Vert_{L^p(G,\ell^{p}(\widehat{G}))}$$ in the first part of the proof of Theorem \ref{Michael:Criterion}.
\end{remark}

In the next result, we assume more regularity in the spatial variable to deduce a criterion for pseudo-differential operators to belong to the Schatten classes.
\begin{theorem}\label{Michael:criterion:2}Let $G$ be a compact Lie group of dimension $n.$ Let $A:C^\infty(G)\rightarrow C^\infty(G) $ be a continuous linear operator and assume that its matrix-valued symbol satisfies the regularity condition
\begin{equation}
    \Vert (1+\mathcal{L}_G)^{\frac{N}{2}}\sigma_A(x,\cdot)\Vert_{L^1(G,\mathscr{S}_p(\widehat{G}))}=\smallint\limits_G\Vert (1+\mathcal{L}_G)^{\frac{N}{2}}\sigma_A(x,\cdot)\Vert_{\mathscr{S}_p(\widehat{G})}\textnormal{d}x<\infty
\end{equation}where $N>n.$ Then $A\in \mathscr{S}_{p}(L^2(G))$ for all $1\leq p<\infty.$
\end{theorem}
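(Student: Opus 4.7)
The plan is to factor $A$ as a composition $A = B\circ T$, where $T := (1+\mathcal{L}_G)^{-N/2}$ is the Bessel potential of order $-N$ and $B$ is the pseudo-differential operator whose matrix-valued symbol equals $\tau(x,[\xi]) := (1+\mathcal{L}_G)^{N/2}\sigma_A(x,\cdot)([\xi]) = \langle\xi\rangle^N\sigma_A(x,[\xi])$. Since $T$ is left-invariant with scalar symbol $\sigma_T([\xi]) = \langle\xi\rangle^{-N}I_{d_\xi}$, the product rule for composition with a left-invariant factor on the right gives $\sigma_{B\circ T}(x,[\xi]) = \sigma_B(x,[\xi])\sigma_T([\xi]) = \sigma_A(x,[\xi])$, so that indeed $A = B\circ T$ as operators on $C^\infty(G)$. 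Since $T\in\Psi^{-N}_{1,0}(G\times\widehat{G})$ is elliptic of order $-N$, Theorem~\ref{CardonaRuzhanskyChatzakouToft2022Th1} applied to $T$ gives $T\in\mathscr{S}_r(L^2(G))$ iff $-N<-n/r$, i.e.\ $r>n/N$; the assumption $N>n$ yields $n/N<1\le p$ for every $p\in[1,\infty)$, so in particular $T\in\mathscr{S}_p(L^2(G))$ for all such $p$.

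The main technical step is to show that $B$ extends to a bounded operator on $L^2(G)$. From the hypothesis $\tau\in L^1(G,\mathscr{S}_p(\widehat{G}))$, together with the continuity of $B = A\circ (1+\mathcal{L}_G)^{N/2}$ as a map $C^\infty(G)\to C^\infty(G)$, I would analyse the right-convolution kernel $R_B(x,z) = \sum_{[\xi]\in\widehat{G}} d_\xi\,\mathrm{Tr}[\xi(z)\tau(x,[\xi])] = \mathscr{F}_G^{-1}(\tau(x,\cdot))(z)$ using the non-commutative Hausdorff--Young inequality on $\widehat{G}$ recalled just after Theorem~\ref{Michael:Criterion}, which, for a suitable index, controls the $L^{p'}(G,dz)$-norm of $R_B(x,\cdot)$ by a Schatten-type norm of $\tau(x,\cdot)$. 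Feeding the resulting pointwise-in-$x$ kernel estimate on $K_B(x,y) = R_B(x,y^{-1}x)$ into a Schur-type $L^2$-boundedness criterion should then yield $\|B\|_{L^2(G)\to L^2(G)}<\infty$.

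Once $B$ is bounded on $L^2(G)$, the ideal property of Schatten classes closes the argument via
$$\|A\|_{\mathscr{S}_p(L^2(G))} = \|B\circ T\|_{\mathscr{S}_p(L^2(G))} \le \|B\|_{\mathrm{op}}\cdot\|T\|_{\mathscr{S}_p(L^2(G))} < \infty,$$
giving $A\in\mathscr{S}_p(L^2(G))$. The hard part is the $L^2$-boundedness of $B$: the hypothesis is an $L^1$-in-$x$ averaged condition on a Schatten-$p$ norm in the $\xi$ variable, which does not immediately produce the uniform (in $x$) kernel bound one would want for a direct Schur test, so the Hausdorff--Young inequality on $\widehat{G}$ and the a priori smoothness coming from the $C^\infty(G)$-continuity of $A$ have to be combined carefully to bridge this gap.
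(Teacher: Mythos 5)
There is a genuine gap, and it starts with a misreading of the hypothesis. In the condition $\Vert (1+\mathcal{L}_G)^{\frac{N}{2}}\sigma_A(x,\cdot)\Vert_{L^1(G,\mathscr{S}_p(\widehat{G}))}<\infty$ the operator $(1+\mathcal{L}_G)^{\frac{N}{2}}$ acts on the symbol in the \emph{spatial} variable $x$, not in the frequency variable: the theorem is explicitly introduced as assuming ``more regularity in the spatial variable,'' and the paper's proof uses the resulting Fourier identity $\widehat{\sigma}_{A,sr}([\eta],[\xi])=\langle\eta\rangle^{-N}\widehat{\sigma}_{N,rs}([\eta],[\xi])$, where $\eta$ is dual to $x$. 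Your $\tau(x,[\xi]):=\langle\xi\rangle^{N}\sigma_A(x,[\xi])$ is therefore not the object the hypothesis controls, so the factorization $A=B\circ T$ with $T=(1+\mathcal{L}_G)^{-N/2}$ acting on the right is built on the wrong quantity. (The pieces of your argument that are correct --- $\sigma_{B\circ T}=\sigma_B\sigma_T$ for a right left-invariant factor, and $T\in\mathscr{S}_p(L^2(G))$ for all $p\geq 1$ when $N>n$ --- do not rescue this.)

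Even setting the misreading aside, the step you flag as ``the hard part'' is not a gap that can be bridged: an $L^1$-in-$x$ bound on a Schatten norm of the symbol cannot yield $L^2(G)$-boundedness of the associated operator, since averaged control in $x$ says nothing about the operator norm (think of symbols concentrated near a single point of $G$). The paper's actual mechanism is different and is precisely designed to exploit the $x$-regularity: one expands $\sigma_A(\cdot,[\xi])$ in the Peter--Weyl basis in $x$, writes $A=\sum_{[\eta]}\sum_{r,s}M_{\eta_{rs}}\,\mathrm{Op}(\widehat{\sigma}_{A,sr}([\eta],\cdot))$ as a sum of multiplication operators composed with Fourier multipliers, estimates each multiplier's $\mathscr{S}_p$ norm by the $\mathscr{S}_p(\widehat{G})$ norm of its symbol via the triangle and Minkowski inequalities, and uses the decay $\langle\eta\rangle^{-N}$ produced by the spatial regularity together with $\sum_{[\eta]}d_\eta^2\langle\eta\rangle^{-N}<\infty$ for $N>n$ to sum the series. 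You would need to replace your factorization by a decomposition of this type (or supply some other mechanism converting $x$-regularity into summable operator-norm or Schatten-norm blocks) for the argument to go through.
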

\begin{proof}Let us consider the matrix-valued symbol $\sigma_A(x,[\xi])=(\sigma_{A,ij}(x,[\xi]))_{i,j=1}^{d_\xi}.$ The Fourier inversion formula allows one to write
\begin{equation}
    \sigma_{A,ij}=\sum_{[\eta]\in \widehat{G}}d_\eta\eta(x)_{rs}
    \widehat{\sigma}_{A,\,ij,sr}(\eta,\xi),
\end{equation} where $\widehat{\sigma}_{A,\,ij,sr}([\eta],[\xi])$ denotes the $(s,r)$-Fourier coefficient of the function $\sigma_{A,ij}(\cdot,[\xi])$ at $\eta\in [\eta]\in \widehat{G}.$ For any $f\in   C^{\infty}(G),$
the quantisation formula gives the identity
\begin{align*}
    Af(x) &=\sum_{[\xi]\in \widehat{G}}d_\xi\textnormal{Tr}[\xi(x)\sigma_A(x,[\xi])\widehat{f}(\xi)]=\sum_{[\xi]\in \widehat{G}}\sum_{i,j,\ell=1}^{d_\xi}d_\xi\xi_{ij}(x)\sigma_{A,\,ji }(x,[\xi])\widehat{f}(\xi)_{i\ell}\\
    &=\sum_{[\eta]\in \widehat{G}}\sum_{r,s=1}^{d_\eta}\sum_{[\xi]\in \widehat{G}}\sum_{i,j,\ell=1}^{d_\xi}d_\xi \xi_{ij}(x)\eta_{rs}(x)\widehat{\sigma}_{A,\,ji,\,sr }([\eta],[\xi])\widehat{f}(\xi)_{i\ell}\\
    &=\sum_{[\eta]\in \widehat{G}}\sum_{r,s=1}^{d_\eta}\sum_{[\xi]\in \widehat{G}}\eta_{rs}(x)d_\xi\textnormal{Tr}[ \xi(x)\widehat{\sigma}_{A,\,sr }([\eta],[\xi])\widehat{f}(\xi)],
\end{align*}where $$ \widehat{\sigma}_{A,\,sr }([\eta],[\xi]):=(\widehat{\sigma}_{A,\,ji,\,sr }([\eta],[\xi]))_{j,i=1}^{d_\xi}. $$ Let us define the  operator $\textnormal{Op}(\widehat{\sigma}_{A,\,sr }([\eta],\cdot))$ corresponding to the matrix $\widehat{\sigma}_{A,\,sr }([\eta],\cdot)$ . Let $M_{\eta_{rs}}$ be the multiplication operator associated to the function $\eta_{rs}.$ Note that $M_{\eta_{rs}}$ is a bounded operator on $L^2(G)$ with operator norm
\begin{equation}
    \Vert M_{\eta_{rs}}\Vert_{\mathscr{B}(L^2(G))}=\Vert\eta_{rs}\Vert_{L^\infty(G)}.
\end{equation}
Then, we have that
\begin{equation}
 \forall f\in C^{\infty}(G),\,\,\forall x\in G,\,
 Af(x) =\sum_{[\eta]\in \widehat{G}}\sum_{r,s=1}^{d_\eta}M_{\eta_{rs}}[\textnormal{Op}(\widehat{\sigma}_{A,\,sr }([\eta],\cdot ))f](x).
\end{equation}
Note that for all $N\in \mathbb{N},$ the symbol $$(x,[\xi])\mapsto \sigma_{N,rs}(x,[\xi]):=(1+\mathcal{L}_G)^{\frac{N}{2}}\sigma_{A,\,rs}(x,[\xi])$$  satisfies the Fourier transform identity
\begin{equation}
 \widehat{\sigma}_{A,\,sr }([\eta],[\xi]) =  \langle\eta\rangle^{-N}\widehat{\sigma}_{N,rs}([\eta],[\xi]).
\end{equation}This identity allows us to write
\begin{align*}
    \Vert A \Vert_{\mathscr{S}_p(L^2(G))} &\leq \sum_{[\eta]\in \widehat{G}}\sum_{r,s=1}^{d_\eta}\Vert M_{\eta_{rs}}\textnormal{Op}(\widehat{\sigma}_{A,\,sr }([\eta],\cdot))\Vert_{\mathscr{S}_p(L^2(G))}\\
    &=\sum_{[\eta]\in \widehat{G}}\sum_{r,s=1}^{d_\eta}\langle \eta\rangle^{-N}\Vert M_{\eta_{rs}}\textnormal{Op}(\widehat{\sigma}_{N,\,sr }([\eta],\cdot))\Vert_{\mathscr{S}_p(L^2(G))}\\
    &=\sum_{[\eta]\in \widehat{G}}\sum_{r,s=1}^{d_\eta}\langle \eta\rangle^{-N}\Vert M_{\eta_{rs}}\Vert_{\mathscr{B}(L^2)}\Vert\textnormal{Op}(\widehat{\sigma}_{N,\,sr }([\eta],\cdot ))\Vert_{\mathscr{S}_p(L^2(G))}\\
    &=\sum_{[\eta]\in \widehat{G}}\sum_{r,s=1}^{d_\eta}\langle \eta\rangle^{-N}\Vert \eta_{rs}\Vert_{L^\infty(G)}\Vert\textnormal{Op}(\widehat{\sigma}_{N,\,sr }([\eta],\cdot ))\Vert_{\mathscr{S}_p(L^2(G))}.
\end{align*}
Since
\begin{equation}\label{estimate:eta:sr}
 \forall x\in G,\,  |\eta_{rs}(x)|\leq \left(\sum_{r',s'=1}^{d_{\eta}}|\eta_{r's'}(x)|^2\right)^{\frac{1}{2}}=\sqrt{\textnormal{Tr}[\eta(x)\eta(x)^{*}]}=\sqrt{d}_\eta,
\end{equation}
we also have
\begin{equation}
   \forall x\in G,\,\sum_{r,s=1}^{d_\eta} |\eta_{rs}(x)|\leq \left(\sum_{r',s'=1}^{d_{\eta}}|\eta_{r's'}(x)|^2\right)^{\frac{1}{2}}d_\eta=\sqrt{\textnormal{Tr}[\eta(x)\eta(x)^{*}]}d_\eta=d_\eta\sqrt{d}_\eta,
\end{equation} where the last implies
\begin{equation}
  \,\sum_{r,s=1}^{d_\eta} \|\eta_{rs}\|_{L^\infty}\leq d_\eta\sqrt{d}_\eta.
\end{equation}
On the other hand, using the triangle inequality for the $\mathscr{S}_p$-norm we have that
\begin{align*}
  \Vert\textnormal{Op}(\widehat{\sigma}_{N,\,sr }([\eta],\cdot ))\Vert^{p}_{\mathscr{S}_p(L^2(G))}
  &\lesssim\sum_{[\xi]\in \widehat{G}}d_\xi  \Vert \widehat{\sigma}_{N,\,sr }([\eta],[\xi])\Vert_{\mathscr{S}_p(\widehat{G})}^{p}\\
  &=\sum_{[\xi]\in \widehat{G}}d_\xi  \Vert\smallint\limits_{G} {\sigma}_{N}(x,[\xi])\eta_{rs}(x)^{*}\textnormal{d}x\Vert_{\mathscr{S}_p(\widehat{G})}^{p}\\
  &\leq  \sum_{[\xi]\in \widehat{G}}d_\xi \left(\smallint\limits_{G}\Vert\eta_{rs}^{*}\Vert_{L^{\infty}(G)}  \Vert {\sigma}_{N}(x,[\xi])\Vert_{\mathscr{S}_p(\widehat{G})}\textnormal{d}x\right)^{p}\\
  &\leq  \sum_{[\xi]\in \widehat{G}}d_\xi\sqrt{d}_{\eta}^{p}\left( \smallint\limits_{G}  \Vert {\sigma}_{N}(x,[\xi])\Vert_{\mathscr{S}_p(\widehat{G})}\textnormal{d}x\right)^p,
\end{align*}
where, in the last line, we have used \eqref{estimate:eta:sr} to estimate
\begin{equation*}
  \Vert\eta_{rs}^{*}\Vert_{L^{\infty}(G)}^p=\Vert\overline{\eta}_{rs}\Vert_{L^{\infty}(G)}^p  =\Vert\eta_{rs}\Vert_{L^{\infty}(G)}^p  \leq \sqrt{d}_\eta^p.
\end{equation*}
The Minkowski integral inequality for $p\geq 1$ implies that
\begin{align*}
\Vert\textnormal{Op}(\widehat{\sigma}_{N,\,sr }([\eta],\cdot ))\Vert_{\mathscr{S}_p} &\lesssim\left( \sum_{[\xi]\in \widehat{G}}d_\xi\sqrt{d}_{\eta}^{p}\left( \smallint\limits_{G}  \Vert {\sigma}_{N }(x,[\xi])\Vert_{\mathscr{S}_p}\textnormal{d}x\right)^p  \right)^{\frac{1}{p}} \\
&\leq \sqrt{d}_\eta \smallint\limits_G\left(\sum_{[\xi]\in \widehat{G}}d_\xi \Vert {\sigma}_{N }(x,[\xi])\Vert_{\mathscr{S}_p}^p \right)^{\frac{1}{p}}\textnormal{d}x\\
&= \sqrt{d}_\eta \smallint\limits_G\Vert {\sigma}_{N }(x,\cdot)\Vert_{\mathscr{S}_p(\widehat{G})} \textnormal{d}x\\
&= \sqrt{d}_\eta \Vert {\sigma}_{N }(x,\cdot)\Vert_{L^1(G,\mathscr{S}_p(\widehat{G}))}.
\end{align*}
All the analysis above allows the following estimate  for the $p$-Schatten norm of $A$ as follows:
\begin{align*}
  \Vert Af \Vert_{\mathscr{S}_p} &\lesssim \sum_{[\eta]\in \widehat{G}}\sum_{r,s=1}^{d_\eta}\langle \eta\rangle^{-N}\Vert \eta_{rs}\Vert_{L^\infty(G)}\Vert\textnormal{Op}(\widehat{\sigma}_{N,\,sr }([\eta],\cdot ))\Vert_{\mathscr{S}_p}  \\
  &\lesssim \sum_{[\eta]\in \widehat{G}}\sum_{r,s=1}^{d_\eta}\langle \eta\rangle^{-N}\Vert \eta_{rs}\Vert_{L^\infty(G)}\sqrt{d}_\eta \Vert {\sigma}_{N }(x,\cdot)\Vert_{L^1(G,\mathscr{S}_p(\widehat{G}))}  \\
  &= \Vert {\sigma}_{N }(x,\cdot)\Vert_{L^1(G,\mathscr{S}_p(\widehat{G}))}  \sum_{[\eta]\in \widehat{G}}\langle \eta\rangle^{-N}\sqrt{d}_\eta \sum_{r,s=1}^{d_\eta}\Vert \eta_{rs}\Vert_{L^\infty(G)}  \\
   &\leq  \Vert {\sigma}_{N }(x,\cdot)\Vert_{L^1(G,\mathscr{S}_p(\widehat{G}))}  \sum_{[\eta]\in \widehat{G}}\langle \eta\rangle^{-N}\sqrt{d}_\eta d_\eta \sqrt{d}_\eta.
\end{align*} Thus, for $N>n$ where $n=\dim(G)$ we have that 
\begin{equation}
  \Vert Af \Vert_{\mathscr{S}_p} \lesssim    \Vert {\sigma}_{N }(x,\cdot)\Vert_{L^1(G,\mathscr{S}_p(\widehat{G}))} \sum_{[\eta]\in \widehat{G}}d_\eta^2\langle \eta\rangle^{-N}<\infty,
\end{equation} where we have use that (see Lemma 3.8 of \cite{DelRuzTrace11})
$$ \sum_{[\eta]\in \widehat{G}}d_\eta^2\langle \eta\rangle^{-s}<\infty \Longleftrightarrow s>n. $$
The proof of Theorem \ref{Michael:criterion:2} is complete.
\end{proof}
The following is a consequence of Theorem \ref{Michael:criterion:2}.
\begin{corollary}Let $G$ be a compact Lie group of dimension $n.$ Let $A:C^\infty(G)\rightarrow C^\infty(G) $ be a continuous linear operator and assume that its matrix-valued symbol satisfies the regularity condition
\begin{equation}
    \Vert (1+\mathcal{L}_G)^{\frac{N}{2}}\sigma_A(x,\cdot)\Vert_{L^p(G,\mathscr{S}_p(\widehat{G}))}=\left(\smallint\limits_G\Vert (1+\mathcal{L}_G)^{\frac{N}{2}}\sigma_A(x,\cdot)\Vert_{\mathscr{S}_p(\widehat{G})}^p\textnormal{d}x\right)^{\frac{1}{p}}<\infty
\end{equation}where $N>n.$ Then $A\in \mathscr{S}_{p}(L^2(G))$ provided that $1\leq p<\infty.$
\end{corollary}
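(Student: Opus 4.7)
The plan is to reduce the corollary to Theorem \ref{Michael:criterion:2} by exploiting the compactness of $G$. Since the Haar measure on $G$ is finite (indeed, normalised so that $|G|=1$), we have the continuous embedding $L^p(G)\hookrightarrow L^1(G)$ for every $p\geq 1$. The key point is that this embedding applies to the scalar function $x\mapsto \Vert (1+\mathcal{L}_G)^{N/2}\sigma_A(x,\cdot)\Vert_{\mathscr{S}_p(\widehat{G})}$ which appears inside both norms.

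First I would apply H\"older's inequality with exponents $p$ and $p'=p/(p-1)$ to the nonnegative function $h(x):=\Vert (1+\mathcal{L}_G)^{N/2}\sigma_A(x,\cdot)\Vert_{\mathscr{S}_p(\widehat{G})}$, yielding
\begin{equation*}
\smallint\limits_G h(x)\,\textnormal{d}x \;\leq\; |G|^{1/p'}\left(\smallint\limits_G h(x)^p\,\textnormal{d}x\right)^{1/p}.
\end{equation*}
Rewriting this bound in terms of the mixed-norm quantities used in the statement gives
\begin{equation*}
\Vert (1+\mathcal{L}_G)^{N/2}\sigma_A\Vert_{L^1(G,\mathscr{S}_p(\widehat{G}))} \;\leq\; |G|^{1/p'}\Vert (1+\mathcal{L}_G)^{N/2}\sigma_A\Vert_{L^p(G,\mathscr{S}_p(\widehat{G}))}.
\end{equation*}
The right-hand side is finite by hypothesis, so the $L^1(G,\mathscr{S}_p(\widehat{G}))$-norm of $(1+\mathcal{L}_G)^{N/2}\sigma_A$ is finite as well.

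At this point the hypotheses of Theorem \ref{Michael:criterion:2} are verified with the same parameters $N>n$ and $p\in[1,\infty)$, so the conclusion $A\in \mathscr{S}_p(L^2(G))$ follows at once. There is no real obstacle here since the entire argument is a one-line application of H\"older's inequality together with the already-established Theorem \ref{Michael:criterion:2}; the only point worth flagging is that compactness of $G$ is essential, as the embedding $L^p\hookrightarrow L^1$ would fail on non-compact groups, and for those the corollary would require a genuinely different argument.
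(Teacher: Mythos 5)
Your argument is correct and is essentially identical to the paper's own proof: both apply H\"older's inequality to the scalar function $x\mapsto \Vert (1+\mathcal{L}_G)^{N/2}\sigma_A(x,\cdot)\Vert_{\mathscr{S}_p(\widehat{G})}$ on the normalised (hence finite) Haar measure to pass from the $L^p(G,\mathscr{S}_p(\widehat{G}))$ hypothesis to the $L^1(G,\mathscr{S}_p(\widehat{G}))$ condition, and then invoke Theorem \ref{Michael:criterion:2}. Your added remark that compactness is what makes the embedding $L^p\hookrightarrow L^1$ available is accurate and consistent with the paper's use of the normalised Haar measure.
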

\begin{proof}Using the H\"older inequality and that the Haar measure of $G$ is normalised we have that
\begin{align}
   \smallint\limits_G\Vert (1+\mathcal{L}_G)^{\frac{N}{2}}\sigma_A(x,\cdot)\Vert_{\mathscr{S}_p(\widehat{G})}\textnormal{d}x\leq \left( \smallint\limits_G\Vert (1+\mathcal{L}_G)^{\frac{N}{2}}\sigma_A(x,\cdot)\Vert_{\mathscr{S}_p(\widehat{G})}^p\textnormal{d}x  \right)^{\frac{1}{p}}<\infty,
\end{align}for all $N>n.$ Then, in view of Theorem \ref{Michael:criterion:2} follows the membership of $A$ in the Schatten class $\mathscr{S}_p(L^2(G)).$    
\end{proof}

\subsection{Schatten properties of elliptic operators. The $(\rho,\delta)$-case}\label{rhodeltasection}

Now we characterise the membership of elliptic pseudo-differential operators to the Schatten classes on $L^2(G).$
\begin{theorem}\label{Schatten:properties:}  Let $m\in \mathbb{R},$ $r>0, $ and let $0\leq \delta<\rho\leq 1.$ Consider an elliptic pseudo-differential operator $A\in \Psi^m_{\rho,\delta}(G\times \widehat{G})$. The following conditions are equivalent:
\begin{itemize}
    \item[(1)] $m\in (-\infty,0)$ and  $A$ belongs to the Schatten class of order $r>0:$ $A\in \mathscr{S}_r(L^2(G));$
    \item[(2)] The Bessel potential of order $m$ belongs to the Schatten class of order $r>0:$ $B_{m}:=(1+\mathcal{L}_G)^{\frac{m}{2}}\in \mathscr{S}_r(L^2(G)). $
    \item[(3)] The following summability condition holds
    \begin{equation}
        \sum_{[\xi]\in \widehat{G}}d_\xi\smallint\limits_G\|\sigma_{|A|^{\frac{r}{2}}}(x,[\xi])\|_{\textnormal{HS} }^2\textnormal{d}x<\infty;
    \end{equation}
    \item[(4)] The following summability condition holds
    \begin{equation}
        \smallint\limits_G\sum_{[\xi]\in \widehat{G}}d_\xi\|\sigma_{A}(x,[\xi])\|_{\mathscr{S}_r }^r\textnormal{d}x<\infty;
    \end{equation}
    \item[(5)] $m<-n/r.$
\end{itemize}
\end{theorem}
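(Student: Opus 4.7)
The plan is to prove the five equivalences by establishing the chain $(5) \Longleftrightarrow (2) \Longleftrightarrow (1) \Longleftrightarrow (3)$ from standard arguments (parametrix, ideal property, Plancherel), and then closing the loop with $(1) \Longleftrightarrow (4)$, which is the point at which the ellipticity hypothesis is genuinely needed through a lower bound on the singular values of the matrix-valued symbol.

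For $(5) \Longleftrightarrow (2)$, I would compute directly on the Bessel potential. Since the symbol of $B_m$ is $\langle \xi\rangle^m I_{d_\xi}$ (see \eqref{Japanne:bracket:G}), the operator $B_m$ is diagonal in the Peter--Weyl decomposition, the eigenvalue $\langle \xi\rangle^m$ occurring with multiplicity $d_\xi^2$. Hence
\begin{equation*}
\|B_m\|_{\mathscr{S}_r}^r = \sum_{[\xi]\in\widehat{G}} d_\xi^2\, \langle \xi\rangle^{mr},
\end{equation*}
and the summability criterion $\sum_{[\xi]} d_\xi^2\langle \xi\rangle^{-s} < \infty \Longleftrightarrow s > n$ (Lemma 3.8 of \cite{DelRuzTrace11}, already invoked in the proof of Theorem \ref{Michael:criterion:2}) gives $B_m \in \mathscr{S}_r \Longleftrightarrow m < -n/r$. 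Next, $(2) \Longleftrightarrow (1)$ follows by a parametrix argument. Membership in $\mathscr{S}_r$ forces $A$ to be compact; by ellipticity this rules out $m\geq 0$ (for $m>0$ the operator fails to be bounded on $L^2$, while for $m=0$ ellipticity provides a Fredholm parametrix incompatible with compactness), so we may assume $m<0$ and then $B_m$ is bounded. Writing $A = (A B_{-m})\, B_m$ with $AB_{-m} \in \Psi^0_{\rho,\delta}(G\times\widehat{G})$ bounded on $L^2(G)$ by Theorem \ref{RTcalculus:Group}, the ideal property yields $(2)\Rightarrow(1)$. Conversely, taking a parametrix $P\in\Psi^{-m}_{\rho,\delta}$ with $PA = I+R_1$ and $R_1$ smoothing, the identity $B_m = (B_m P) A - B_m R_1$ expresses $B_m$ as a sum of a Schatten class operator (since $B_m P \in \Psi^0$ is bounded and $A\in\mathscr{S}_r$) and a Schatten class operator (since $R_1$ is smoothing, hence in every $\mathscr{S}_r$, and $B_m$ is bounded), establishing $(1)\Rightarrow(2)$.

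The equivalence $(1) \Longleftrightarrow (3)$ reduces to the classical identity $A \in \mathscr{S}_r \Longleftrightarrow |A|^{r/2} \in \mathscr{S}_2$ combined with Plancherel's formula on $G\times\widehat{G}$: for any Hilbert--Schmidt operator $T$ on $L^2(G)$,
\begin{equation*}
\|T\|_{\mathscr{S}_2}^2 = \sum_{[\xi]\in\widehat{G}} d_\xi \smallint_G \|\sigma_T(x,[\xi])\|_{\textnormal{HS}}^2\,\textnormal{d}x,
\end{equation*}
applied to $T = |A|^{r/2}$. For $(1) \Longleftrightarrow (4)$, the implication $(5)\Rightarrow(4)$ uses the crude matrix inequality $\|M\|_{\mathscr{S}_r} \leq d_\xi^{1/r}\|M\|_{\mathrm{op}}$ for $M\in\mathbb{C}^{d_\xi\times d_\xi}$ together with the symbol bound $\|\sigma_A(x,[\xi])\|_{\mathrm{op}} \lesssim \langle \xi\rangle^m$, which yields $d_\xi\|\sigma_A(x,[\xi])\|_{\mathscr{S}_r}^r \lesssim d_\xi^2 \langle \xi\rangle^{mr}$; summing and integrating over the compact group $G$ gives finiteness under $m<-n/r$. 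The converse $(4)\Rightarrow(5)$ is where ellipticity is indispensable: ellipticity forces the inverse bound $\|\sigma_A(x,[\xi])^{-1}\|_{\mathrm{op}} \lesssim \langle \xi\rangle^{-m}$ for $\langle \xi\rangle$ large, which translates to the lower bound $s_j(\sigma_A(x,[\xi])) \gtrsim \langle \xi\rangle^m$ for every singular value. Consequently $\|\sigma_A(x,[\xi])\|_{\mathscr{S}_r}^r \gtrsim d_\xi \langle \xi\rangle^{mr}$ uniformly in $x\in G$, and finiteness of the expression in (4) forces $\sum_{[\xi]} d_\xi^2 \langle \xi\rangle^{mr} < \infty$, i.e.\ $m<-n/r$ by the same summability criterion.

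The step I expect to be the main obstacle is the direction $(4)\Rightarrow(5)$, and specifically the passage from the scalar ellipticity estimate (invertibility of the symbol with a controlled inverse operator norm) to a uniform lower bound on every singular value of the $d_\xi \times d_\xi$ matrix $\sigma_A(x,[\xi])$. This requires tracking the ellipticity constants uniformly in both the base variable $x\in G$ and the dimension $d_\xi$ of the representation, and checking that the bound $s_{\min}(\sigma_A(x,[\xi]))\gtrsim \langle \xi\rangle^m$ can be upgraded to $s_j(\sigma_A(x,[\xi]))\gtrsim \langle \xi\rangle^m$ for every $1\leq j\leq d_\xi$, which is what produces the factor $d_\xi$ that matches the upper bound and makes the criterion sharp. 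All other steps are either computational or routine consequences of Theorem \ref{RTcalculus:Group} and the Plancherel identity.
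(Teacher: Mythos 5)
Your proposal is correct, but it departs from the paper's proof in two substantive places, so a comparison is in order. For $(1)\Rightarrow(2)$ the paper does not use a parametrix identity $PA=I+R_1$; it invokes the global functional calculus of \cite{RuzhanskyWirth2014} to place $(1+|A|^{1/|m|})^{\pm m}$ in $\Psi^{\pm m}_{\rho,\delta}(G\times\widehat G)$, writes $B_m=B_m(1+|A|^{1/|m|})^{-m}(1+|A|^{1/|m|})^{m}$, and controls the last factor by the singular-value comparison $s_j\bigl((1+|A|^{1/|m|})^{m}\bigr)\le s_j(A)$. Your parametrix route is the more classical one and is equally valid here, since the existence of a parametrix for elliptic operators in the global classes over the full range $0\le\delta<\rho\le1$ comes from exactly the same toolbox (\cite{RuzhanskyWirth2014}) that the paper is already using; just be explicit about that citation. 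The larger divergence concerns condition (4): the paper proves $(3)\Leftrightarrow(4)$ via the functional-calculus expansion $\sigma_{|A|^{r/2}}(x,[\xi])=|\sigma_A(x,[\xi])|^{r/2}+r_A(x,[\xi])$ with remainder $r_A\in S^{mr/2-(\rho-\delta)}$, comparing $\Vert\sigma_{|A|^{r/2}}\Vert_{\textnormal{HS}}$ with $\Vert\sigma_A\Vert_{\mathscr{S}_r}^{r/2}$ and absorbing the remainder using $mr<-n$ and $\rho>\delta$; you instead tie (4) directly to (5) through the two-sided singular-value bounds $\langle\xi\rangle^{m}\lesssim s_j(\sigma_A(x,[\xi]))\lesssim\langle\xi\rangle^{m}$, the upper bound from the symbol estimates and the lower bound from ellipticity, which reduces (4) to the convergence of $\sum_{[\xi]}d_\xi^2\langle\xi\rangle^{mr}$. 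Your route is more elementary in that it never needs to know that $|A|^{r/2}$ is a pseudo-differential operator with an asymptotic expansion, while the paper's route yields a genuine symbol-level comparison between the two summability conditions (3) and (4) rather than passing through the order. Finally, the step you flag as the main obstacle is not one: the lower bound on \emph{every} singular value is immediate from $s_j(\sigma_A(x,[\xi]))\ge s_{\min}(\sigma_A(x,[\xi]))=\Vert\sigma_A(x,[\xi])^{-1}\Vert_{\textnormal{op}}^{-1}\gtrsim\langle\xi\rangle^{m}$, and the uniformity in $x$ and in $[\xi]$ is built into the definition of ellipticity for the global matrix-valued symbol classes, so the factor $d_\xi$ appears automatically from summing $d_\xi$ identical lower bounds.
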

\begin{proof}We start with the first equivalence $(1)\Longleftrightarrow(2):$

    \begin{itemize}
    \item $(1)\Longrightarrow (2).$ Assume that $A\in \mathscr{S}_r(L^2(G)).$  Let  $B_{s}:=(1+\mathcal{L}_G)^{\frac{s}{2}}\in \mathscr{S}_r(L^2(G))$ be the Bessel potential of order $s\in \mathbb{R}.$ The global functional calculus for elliptic operators  on compact Lie groups (see \cite{RuzhanskyWirth2014}) implies that
    \begin{equation}
        (1+|A|^{\frac{1}{|m|}})^{ \pm {m}}\in \Psi^{ \pm m}_{\rho,\delta}(G\times \widehat{G}).
    \end{equation} In view of the composition properties of the pseudo-differential calculus we have that $$  B_{m}(1+|A|^{\frac{1}{|m|}})^{ -m }\in \Psi^{0}_{\rho,\delta}(G\times \widehat{G}).$$ The Calder\'on-Vaillancourt theorem (see Theorem 5.2 in \cite{RuzhanskyWirth2014}) implies that $B_{m}(1+|A|^{\frac{1}{|m|}})^{ -m }$ is a bounded operator on $L^2(G).$ Since $\mathscr{S}_{r}(L^2(G))$ is an ideal of operators on the algebra $\mathscr{B}(L^2(G)),$ we have that
    \begin{equation}
        B_{m}= B_{m}(1+|A|^{\frac{1}{|m|}})^{ -m }(1+|A|^{\frac{1}{|m|}})^{ m },
    \end{equation}belongs to the Schatten class $\mathscr{S}_r(L^2(G))$ provided that $$ (1+|A|^{\frac{1}{|m|}})^{ m }\in \mathscr{S}_r(L^2(G)).$$  Now, we are going to prove this fact. Since $m\leq 0,$ if $s_{j}(A)$ is the $j$-singular value of $A,$ then the $j$-singular value  $s_{j}((1+|A|^{\frac{1}{|m|}})^{ m })$ of $(1+|A|^{\frac{1}{|m|}})^{ m }$ satisfies the inequality
    \begin{equation}
       s_{j}((1+|A|^{\frac{1}{|m|}})^{ m })\leq s_{j}(A),
    \end{equation}and then the membership of $(1+|A|^{\frac{1}{|m|}})^{ m }$ in $\mathscr{S}_r(L^2(G))$ follows from (1).
    \end{itemize}
    \begin{itemize}
        \item $(2)\Longleftrightarrow(5).$ Observe that 
\begin{align*}
    B_{m}\in\mathscr{S}_r(L^2(G))\Longleftrightarrow B_{mr/2}\in\mathscr{S}_2(L^2(G)). 
\end{align*}On the other hand, 
\begin{align*}
    \Vert B_{mr/2}\Vert_{\mathscr{S}_2(L^2(G))}^2=\sum_{[\xi]\in \widehat{G}}d_\xi^2\langle \xi \rangle^{mr}<\infty,
\end{align*}if and only if $mr<-n,$ (see Lemma 3.8 of \cite{DelRuzTrace11}) or equivalently, if $m<-n/r$ proving the statement.
    \end{itemize}
    \begin{itemize}
    \item $(2)\Longrightarrow (1).$ Let us give a similar proof to the one given for the reverse statement. Because of $(2)$ we have that $m<-n/r<0.$ On the other hand,    by the pseudo-differential calculus, we have that
    \begin{equation}
            AB_{-m}\in \Psi^{0}_{\rho,\delta}(G\times \widehat{G}).
        \end{equation}  Moreover, the Calder\'on-Vaillancourt theorem (see Theorem 5.2 in \cite{RuzhanskyWirth2014})  gives the boundedness of $AB_{-m}$ on $L^2(G).$ If $B_{m}\in\mathscr{S}_r(L^2(G)),$ then using that $\mathscr{S}_{r}(L^2(G))$ is an ideal in the algebra of the bounded operators on $L^2(G),$ we have that
        \begin{equation*}
            A=AB_{-m}B_{m}\in\mathscr{S}_r(L^2(G)).
        \end{equation*}
\item $(1)\Longleftrightarrow(3).$ Note that
\begin{equation}
    A\in \mathscr{S}_{r}(L^2(G))\Longleftrightarrow |A|^{\frac{r}{2}}\in\mathscr{S}_2(L^2(G)).
\end{equation}Let $R_{|A|^{\frac{r}{2}}}(x,y) $ and $K_{|A|^{\frac{r}{2}}}(x,y) $ be the right-convolution kernel and the Schwartz kernel  of the operator $|A|^{\frac{r}{2}},$ respectively. We have the identity $$ \forall x,y\in G,\,\, R_{A}(x,xy^{-1})=K_A(x,y).$$   By the Plancherel theorem we have the equivalences
\begin{align*}
    |A|^{\frac{r}{2}}\in\mathscr{S}_2(L^2(G))  &\Longleftrightarrow K_{|A|^{\frac{r}{2}}}(x,y)\in L^2(G\times G)\\
    &\Longleftrightarrow \smallint\limits_{G}\smallint\limits_{G}|R_{A}(x,y)|^2\textnormal{d}y\textnormal{d}x<\infty\\
    &\Longleftrightarrow \smallint\limits_{G}\sum_{[\xi]\in \widehat{G}} d_\xi\Vert\sigma_{|A|^{\frac{r}{2}}} (x,[\xi])\|^2_{\textnormal{HS}}\textnormal{d}x<\infty.
\end{align*}

\item $(3)\Longleftrightarrow(4).$ In order to prove the equivalence of these summability conditions we are going to apply the global functional calculus (see \cite{RuzhanskyWirth2014}). We start with the identity
\begin{equation}
    \sigma_{|A|^{\frac{r}{2}}}(x,[\xi])=|\sigma_A(x,[\xi])|^{\frac{r}{2}}+r_{A}(x,[\xi]),
\end{equation}where the lower term $r_{A}\in S^{\frac{mr}{2}-(\rho-\delta)}(G\times \widehat{G})$ because of the asymptotic expansions. First, note that
\begin{align*}
    \Vert\sigma_{|A|^{\frac{r}{2}}}(x,[\xi])\Vert_{\textnormal{HS}} &\leq  \Vert  |\sigma_A(x,[\xi])|^{\frac{r}{2}} \Vert_{\textnormal{HS}}+ \Vert r_{A}(x,[\xi])\Vert_{\textnormal{HS}}\\
    &=\Vert  \sigma_A(x,[\xi]) \Vert_{\mathscr{S}_r}^{\frac{r}{2}}+ \Vert r_{A}(x,[\xi])\Vert_{\textnormal{HS}}.
\end{align*}Similarly, since $|\sigma_A(x,[\xi])|^{\frac{r}{2}}=\sigma_{|A|^{\frac{r}{2}}}(x,[\xi])-r_{A}(x,[\xi]),$ we have that 
        \begin{align*}
           \Vert  \sigma_A(x,[\xi]) \Vert_{\mathscr{S}_r}^{\frac{r}{2}}= \Vert  |\sigma_A(x,[\xi])|^{\frac{r}{2}} \Vert_{\textnormal{HS}}\leq \Vert\sigma_{|A|^{\frac{r}{2}}}(x,[\xi])\Vert_{\textnormal{HS}} + \Vert r_{A}(x,[\xi])\Vert_{\textnormal{HS}}.
        \end{align*}
        To estimate the remainder $\Vert r_{A}(x,[\xi])\Vert_{\textnormal{HS}}$ let us use that $r_A\in S^{ \frac{mr}{2}-(\rho-\delta)}(G\times \widehat{G}). $ Then,
   \begin{equation}
      \Vert r_{A}(x,[\xi]) \Vert_{\textnormal{HS}}\leq \Vert r_{A}(x,[\xi])\langle\xi\rangle^{-mr/2+(\rho-\delta)}\Vert_{\textnormal{op}}\Vert \langle\xi\rangle^{mr/2-(\rho-\delta)}I_{d_\xi} \Vert_{\textnormal{HS}} \lesssim d_{\xi}^{\frac{1}{2}}\langle\xi\rangle^{\frac{mr}{2}-(\rho-\delta)}.
   \end{equation} To prove that $(3)\Longrightarrow(4),$ observe that the condition $(3)$ is equivalent to the fact that $A\in \mathscr{S}_{r}(L^2(G)),$ from which we deduce that $B_{m}\in \mathscr{S}_{r}(L^2(G)).$ In terms of the order $m$ we have that $m<-n/r.$ To prove that $(4)$ holds, note that
   \begin{align*}
      \smallint\limits_G\sum_{[\xi]\in \widehat{G}}d_\xi \Vert  \sigma_A(x,[\xi]) \Vert_{\mathscr{S}_r}^{r}\textnormal{d}x &\leq  \smallint\limits_G\sum_{[\xi]\in \widehat{G}}d_\xi \max\{ 2\Vert\sigma_{|A|^{\frac{r}{2}}}(x,[\xi])\Vert_{\textnormal{HS}}, \Vert r_{A}(x,[\xi])\Vert_{\textnormal{HS}}\}^2\\
     &= 4\max\left\{\smallint\limits_G\sum_{[\xi]\in \widehat{G}}d_\xi  \Vert\sigma_{|A|^{\frac{r}{2}}}(x,[\xi])\Vert_{\textnormal{HS}}^2\textnormal{d}x,\smallint\limits_G\sum_{[\xi]\in \widehat{G}}d_\xi \Vert r_{A}(x,[\xi])\Vert_{\textnormal{HS}}^2\right\}\\
      &\lesssim  4\max\left\{\smallint\limits_G\sum_{[\xi]\in \widehat{G}}d_\xi  \Vert\sigma_{|A|^{\frac{r}{2}}}(x,[\xi])\Vert_{\textnormal{HS}}^2\textnormal{d}x,\sum_{[\xi]\in \widehat{G}}d_\xi^2 \langle \xi\rangle^{mr-2(\rho-\delta)}\right\}.
   \end{align*}Since $\rho>\delta,$ we can compare $\langle \xi\rangle^{mr-2(\rho-\delta)}\leq \langle \xi\rangle^{mr},$ and then
   \begin{equation}\label{Aux:convergence}
       \sum_{[\xi]\in \widehat{G}}d_\xi^2 \langle \xi\rangle^{mr-2(\rho-\delta)}\leq \sum_{[\xi]\in \widehat{G}}d_\xi^2 \langle \xi\rangle^{mr}<\infty
   \end{equation}because $mr<-n.$ Then, $(3)$ together with the convergence of the series in \eqref{Aux:convergence} implies $(4)$. Now, to finish the proof, let us assume $(4)$ and from it let us deduce $(3).$ To do so we are going to use the inequality
   \begin{equation}
     \Vert\sigma_{|A|^{\frac{r}{2}}}(x,[\xi])\Vert_{\textnormal{HS}}    \leq \Vert  \sigma_A(x,[\xi]) \Vert_{\mathscr{S}_r}^{\frac{r}{2}}+ \Vert r_{A}(x,[\xi])\Vert_{\textnormal{HS}}.
   \end{equation} Then, we argue as follows:
   \begin{align*}
      \smallint\limits_G\sum_{[\xi]\in \widehat{G}}d_\xi \Vert\sigma_{|A|^{\frac{r}{2}}}(x,[\xi])\Vert_{\textnormal{HS}}^2 \textnormal{d}x &\leq  \smallint\limits_G\sum_{[\xi]\in \widehat{G}}d_\xi (2\max\{ \Vert  \sigma_A(x,[\xi]) \Vert_{\mathscr{S}_r}^{\frac{r}{2}}, \Vert r_{A}(x,[\xi])\Vert_{\textnormal{HS}}\})^2\\
     &= 4\max\left\{\smallint\limits_G\sum_{[\xi]\in \widehat{G}}d_\xi  \Vert  \sigma_A(x,[\xi]) \Vert_{\mathscr{S}_r}^{r}\textnormal{d}x,\smallint\limits_G\sum_{[\xi]\in \widehat{G}}d_\xi \Vert r_{A}(x,[\xi])\Vert_{\textnormal{HS}}^2\right\}\\
      &\lesssim  4\max\left\{\smallint\limits_G\sum_{[\xi]\in \widehat{G}}d_\xi  \Vert\sigma_{|A|^{\frac{r}{2}}}(x,[\xi])\Vert_{\textnormal{HS}}^2\textnormal{d}x,\sum_{[\xi]\in \widehat{G}}d_\xi^2 \langle \xi\rangle^{mr-2(\rho-\delta)}\right\}
   \end{align*}from which we deduce the convergence of the series in (3).
     \end{itemize} In view of the analysis above the proof of Theorem \ref{Schatten:properties:} is complete.
\end{proof}

\subsection{Schatten properties of non-elliptic operators. Classical symbols}\label{ClassicalSection}

We start this subsection, by showing that the order of an operator can be used as a sufficient condition  for  deducing its Schatten-von-Neumann properties, even if the operator is non-elliptic. 

The next corollary is a consequence of Theorem \ref{Schatten:properties:}.

\begin{corollary}\label{Schatten:properties:cor}  Let $m<-n/r,$  $r>0, $ and let $0\leq \delta<\rho\leq 1.$ Consider a pseudo-differential operator $A\in \Psi^m_{\rho,\delta}(G\times \widehat{G})$. Then, $A\in \mathscr{S}_{r}(L^2(G)).$
\end{corollary}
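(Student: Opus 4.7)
The plan is to reduce the non-elliptic case to the elliptic case already established in Theorem \ref{Schatten:properties:}, using the Bessel potential $B_m=(1+\mathcal{L}_G)^{m/2}$ as an elliptic reference. The key observation is that, while $A$ itself need not be elliptic, the Bessel potential is always elliptic of order $m$, so the equivalence $(2)\Longleftrightarrow (5)$ in Theorem \ref{Schatten:properties:} applies to it.

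First I would note that, since $S^{-m}_{1,0}(G\times\widehat G)\subseteq S^{-m}_{\rho,\delta}(G\times\widehat G)$ (the pointwise estimates improve because $\rho\le 1$ and $\delta\ge 0$), the Bessel potential $B_{-m}$ of order $-m$ lies in $\Psi^{-m}_{\rho,\delta}(G\times\widehat G)$. By the composition property in Theorem \ref{RTcalculus:Group}, the product
\begin{equation*}
AB_{-m}\in \Psi^{m}_{\rho,\delta}(G\times\widehat G)\circ \Psi^{-m}_{\rho,\delta}(G\times\widehat G)\subseteq \Psi^{0}_{\rho,\delta}(G\times\widehat G),
\end{equation*}
and the Calder\'on-Vaillancourt theorem (the $L^2$-boundedness statement in Theorem \ref{RTcalculus:Group}) yields that $AB_{-m}$ extends to a bounded operator on $L^2(G)$.

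Next, I apply Theorem \ref{Schatten:properties:} to the elliptic operator $B_m$: the hypothesis $m<-n/r$ is precisely condition $(5)$ there, hence by $(5)\Longrightarrow(2)$ we conclude $B_m\in \mathscr{S}_r(L^2(G))$. Using the functional-calculus identity $B_{-m}B_{m}=\mathrm{Id}$ on $C^\infty(G)$, we factor
\begin{equation*}
A=(AB_{-m})\,B_m.
\end{equation*}
Since $\mathscr{S}_r(L^2(G))$ is a two-sided ideal in $\mathscr{B}(L^2(G))$, the product of the bounded operator $AB_{-m}$ with the Schatten-class operator $B_m$ belongs to $\mathscr{S}_r(L^2(G))$, which is the desired conclusion.

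There is no substantive obstacle: the argument is a direct transposition of the proof of $(2)\Longrightarrow(1)$ from Theorem \ref{Schatten:properties:}, where ellipticity of $A$ was used to invert $A$ by $B_{-m}$ on the left; here we do not need to invert $A$ because $B_{-m}$ already inverts $B_m$, and the ellipticity of $B_m$ (used via Theorem \ref{Schatten:properties:}) replaces the ellipticity of $A$. The only point requiring care is the applicability of the Calder\'on-Vaillancourt theorem and of the composition formula in the full range $0\le \delta<\rho\le 1$, both of which are ensured by Theorem \ref{RTcalculus:Group}.
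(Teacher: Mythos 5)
Your argument is correct and is essentially the paper's own proof: the paper likewise factors $A=(AB_{-m})B_m$, invokes the Calder\'on--Vaillancourt theorem for the boundedness of $AB_{-m}\in\Psi^0_{\rho,\delta}(G\times\widehat{G})$, obtains $B_m\in\mathscr{S}_r(L^2(G))$ from $m<-n/r$, and concludes by the two-sided ideal property of $\mathscr{S}_r(L^2(G))$. The extra details you supply (the inclusion $\Psi^{-m}_{1,0}\subseteq\Psi^{-m}_{\rho,\delta}$ and the appeal to the equivalence $(2)\Longleftrightarrow(5)$ for the elliptic reference operator $B_m$) are exactly the steps the paper leaves implicit.
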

\begin{proof}
    Let us give an algebraic argument. Since $m<-n/r,$ then the Bessel potential satisfies $B_{m}=(1+\mathcal{L}_G)^{\frac{m}{2}}\in \mathscr{S}_{r}(L^2(G)).$ Using again that the class $\mathscr{S}_{r}(L^2(G))$ is an ideal on the algebra of all bounded operators on $L^2(G),$ and that $AB_{-m}$ is bounded on $L^2(G)$ (in view of the Calder\'on-Vaillancourt theorem), we have that $A=AB_{-m} B_{m}$ belongs to the Schatten class $\mathscr{S}_{r}(L^2(G)).$
\end{proof}
Now, using the local Weyl formula (see  \cite[Theorem 1.1]{CardonaDelgadoRuzhanksyLOcalWeyl}) for elliptic operators we have the following improvement of the equivalence $\textnormal{(1)}\Longleftrightarrow \textnormal{(5)} $ in Theorem \ref{Schatten:properties:} for trace class operators in the Kohn-Nirenberg algebra. Here,  let us consider  the norm $|| \cdot ||_{g}$ on the Lie algebra $\mathfrak{g}$ induced by the Killing form $B(X,Y)=\textnormal{Tr}[\textnormal{ad}(X)\textnormal{ad}(Y)],$ $X,Y\in\mathfrak{g},  $ defined by  $|| X ||_{g}:=\sqrt{-B(X,X)}.$
\begin{proposition}\label{Invariance:trace} Let $A\in \Psi^{m}_{1,0}(G)$ be a classical  pseudo-differential operator of order $m\leq 0$. Assume that the average of the principal symbol $\sigma_{loc,A}\in C^\infty(T^*G)$ of $A$ on the co-sphere is non-zero, that is
\begin{equation}\label{Average:symbol}
    \textnormal{Av}[\sigma_{loc,A}]:=\smallint\limits_{ T^{*}\mathbb{S}(G)}\sigma_{loc,A}(x,\eta)d\mu_{L}(x,\eta)\neq 0,
\end{equation}
with $\mu_{L}$ denoting the Liouville measure on the spherical vector bundle $T^*\mathbb{S}(G)=\{(x,\eta)\in T^*G:||\eta||_g=1\}.$    Then, $A\in \mathscr{S}_1(L^2(G))$ if and only if $m<-n.$    
\end{proposition}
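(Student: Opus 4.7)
The implication $m<-n\Rightarrow A\in\mathscr{S}_{1}(L^{2}(G))$ is immediate from Corollary \ref{Schatten:properties:cor} applied with $r=1$, and does not use the hypothesis on the cosphere average. The interesting direction is the converse, which I would prove by contradiction: assume $A\in \mathscr{S}_{1}(L^{2}(G))$ and suppose, to the contrary, that $-n\le m\le 0$.

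By the Peter--Weyl theorem, the set $\{\sqrt{d_{\xi}}\,\xi_{ij}:[\xi]\in\widehat{G},\,1\le i,j\le d_{\xi}\}$ is an orthonormal basis of $L^{2}(G)$ consisting of eigenfunctions of $\mathcal{L}_{G}$ with eigenvalue $\lambda_{[\xi]}$. Using the quantisation formula and the identity $K_{A}(x,x)=R_{A}(x,e_{G})$ for the Schwartz kernel evaluated on the diagonal, the trace class assumption translates into the convergence of the partial sums
\begin{equation*}
S(\lambda):=\smallint\limits_{G}\sum_{\lambda_{[\xi]}\le\lambda}d_{\xi}\,\textnormal{Tr}[\sigma_{A}(x,[\xi])]\,\textnormal{d}x\longrightarrow\textnormal{Tr}(A),\qquad \lambda\to\infty.
\end{equation*}
In particular $S(\lambda)$ is uniformly bounded in $\lambda$.

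The core step is to invoke the local Weyl formula for classical pseudo-differential operators established in Theorem 1.1 of \cite{CardonaDelgadoRuzhanksyLOcalWeyl}, applied with $\mathcal{L}_{G}$ as the reference elliptic positive operator. Because $\sigma_{loc,A}$ is positively homogeneous of degree $m$ in the cotangent fibre, passing to polar coordinates $\eta=r\omega$ with $\omega\in T^{*}\mathbb{S}(G)$ produces a radial integral $\int^{\sqrt{\lambda}}r^{n+m-1}\textnormal{d}r$, so the local Weyl expansion takes the form
\begin{equation*}
S(\lambda)\sim C_{n,m}\,\lambda^{(n+m)/2}\,\textnormal{Av}[\sigma_{loc,A}]\quad\text{as }\lambda\to\infty,
\end{equation*}
with a non-zero universal constant $C_{n,m}$ depending only on the normalisation of $\mu_{L}$ (and a logarithmic factor $\log\lambda$ in the critical case $m=-n$). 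Since $\textnormal{Av}[\sigma_{loc,A}]\ne 0$ by hypothesis, the right-hand side is unbounded whenever $n+m\ge 0$, contradicting the boundedness of $S(\lambda)$. Therefore $m<-n$, as desired.

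The main obstacle will be the borderline exponent $m=-n$, where one needs a sufficiently sharp form of the local Weyl asymptotic to exhibit the logarithmic divergence rather than just the leading polynomial term. If the cited theorem only provides the generic $\lambda^{(n+m)/2}$ term for $m>-n$, I would instead run the argument on a family: apply the non-critical local Weyl law to the classical operators $A_{\varepsilon}=A\,(1+\mathcal{L}_{G})^{\varepsilon/2}\in \Psi^{m+\varepsilon}_{1,0}(G)$ for small $\varepsilon>0$, whose cosphere averages differ from $\textnormal{Av}[\sigma_{loc,A}]$ by a positive rescaling and hence remain non-zero. If $m=-n$, then $m+\varepsilon>-n$, and the partial sums of $\textnormal{Tr}(A_{\varepsilon})$ diverge by the non-critical local Weyl formula; on the other hand, trace class of $A$ together with $\mathscr{S}_{1}$-ideal stability and boundedness of $(1+\mathcal{L}_{G})^{\varepsilon/2}A^{-1}$-type compositions (on appropriate eigenpackets of $\mathcal{L}_{G}$) should rule out this divergence, yielding the strict inequality. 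A secondary delicate point, worth isolating from the local Weyl input, is verifying that the non-vanishing cosphere average effectively replaces the ellipticity hypothesis customary in Weyl-type asymptotics, which is the very reason this average condition appears in the statement.
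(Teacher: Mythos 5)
Your overall strategy---deduce the easy direction from Corollary \ref{Schatten:properties:cor} with $r=1$, then argue by contradiction that if $m\ge -n$ the partial sums $S(\lambda)=\sum_{|\xi|\le\lambda}d_\xi\smallint_G\textnormal{Tr}[\sigma_A(x,\xi)]\,\textnormal{d}x$ must diverge by a local Weyl asymptotic weighted by the non-zero cosphere average---is exactly the strategy of the paper. The gap is in how you obtain the asymptotics. The local Weyl formula of \cite{CardonaDelgadoRuzhanksyLOcalWeyl} that the paper invokes is stated for operators of \emph{order zero}; the $\lambda^{(n+m)/2}$ expansion for an order-$m$ operator (let alone the logarithmic refinement at $m=-n$) is not the cited result, and your ``polar coordinates'' derivation of it is a heuristic rather than a proof. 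The paper supplies precisely this missing step: it splits $A$ into its self-adjoint real and imaginary parts (so that at least one of the two cosphere averages is non-zero), applies the order-zero formula to $\tilde A=A\mathcal{L}_G^{-m/2}\in\Psi^0(G)$, and recovers the trace of $A$ by summing the resulting Weyl counts over dyadic spectral shells $2^{k-1}<|\xi|\le 2^k$ weighted by $2^{km}$. Each shell contributes $\asymp \tilde C\,2^{k(n+m)}+O(2^{k(n+m-1)})$ with $\tilde C\neq 0$, and the resulting series diverges for every $m\ge -n$, \emph{including} the borderline $m=-n$, where the terms are bounded away from zero. No separate treatment of the critical exponent is needed.

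Your proposed fallback for $m=-n$ does not work as stated. You set $A_\varepsilon=A(1+\mathcal{L}_G)^{\varepsilon/2}$ and want to contradict its divergent Weyl sums using ``trace class of $A$ together with $\mathscr{S}_1$-ideal stability''; but $(1+\mathcal{L}_G)^{\varepsilon/2}$ is an unbounded operator, so $A\in\mathscr{S}_1(L^2(G))$ gives no control whatsoever on $A_\varepsilon$---the ideal property of $\mathscr{S}_1$ only absorbs compositions with \emph{bounded} operators. (Compare Theorem \ref{Invariance:trace2}, where the paper perturbs by the \emph{additive} term $(\mathcal{L}_G)^{-n-\varepsilon}$, which is itself trace class, rather than composing with an unbounded factor.) To close the argument you should either carry out the dyadic summation against the order-zero Weyl law as the paper does, or prove the order-$m$ local Weyl asymptotic for all $m\in[-n,0]$, including the logarithmic term at $m=-n$, from scratch; as written, the ``only if'' direction is not established.
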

\begin{proof} From Corollary \ref{Schatten:properties:cor} it follows that for $m<-n,$ we have $A\in \mathscr{S}_1(L^2(G)).$ Now, let us prove the converse statement, that is if $A\in \mathscr{S}_1(L^2(G))$ then $m<-n.$ Let us prove this by assuming that $m\geq -n,$   and let us get a contradiction.

Note that if $A\in \mathscr{S}_1(L^2(G))$ then for any orthonormal basis $(\phi_k)_{k}$ of $L^2(G),$ the series $\sum_{k}(A\phi_k,\phi_k)_{L^2(G)}$ is absolutely convergent  and this sum is independent of the choice of the orthonormal basis $(\phi_k).$ Then, the trace of $A$ is given by 
\begin{equation}
    \textnormal{Tr}(A):=\sum_{k}(A\phi_k,\phi_k)_{L^2(G)}.
\end{equation}
For our purposes, we will consider the orthonormal basis
\begin{equation}\label{Peter:Weyl}
 \{d_{\xi}^{\frac{1}{2}}\xi_{i,j}:{[\xi]\in \widehat{G}:1\leq i,j\leq d_\xi}\},   
\end{equation}
 of $L^2(G)$ provided by the Peter-Weyl theorem. Consider the spectrum of the positive Laplacian $$ \textnormal{Spect}(\mathcal{L}_G)=\{|\xi|:=\lambda_{[\xi]}:[\xi]\in \widehat{G}\}.$$  For any $\lambda>0,$ we will consider the partial sum
\begin{equation}
    S_{\lambda}:=  \sum_{|\xi|\leq \lambda}  \sum_{i,j=1}^{d_\xi}(\tilde{A}(d_\xi^{\frac{1}{2}}\xi_{ij}),d_\xi^{\frac{1}{2}}\xi_{ij}),
\end{equation} and the discussion above allows us to use the identity
\begin{equation}\label{TraceofA}
    \textnormal{Tr}(A)=\lim_{\lambda\rightarrow \infty}S_{\lambda}=\lim_{\lambda\rightarrow \infty}\sum_{|\xi|\leq \lambda}  \sum_{i,j=1}^{d_\xi}(\tilde{A}(d_\xi^{\frac{1}{2}}\xi_{ij}),d_\xi^{\frac{1}{2}}\xi_{ij}).
\end{equation}Next, we  will prove that if $m\geq -n,$ then the series in \eqref{TraceofA} is not absolutely convergent and then, that $A$ is not of trace class which would contradict our hypothesis.

To simplify the proof, let us consider the real part $\textnormal{Re}(A)$ and the imaginary part $\textnormal{Im}(A)$ of $A,$ respectively. We have the identities
 \begin{equation}
        \textnormal{Re}(A)=(A+A^{*})/2,\,\,\textnormal{Re}(A)=(A-A^{*})/2i.
    \end{equation}To simplify the notation we will write $A_{0}=\textnormal{Re}(A),$ and  $A_{1}=\textnormal{Im}(A).$ 
The following facts hold:
\begin{itemize}
    \item $A_{i}\in \Psi^m(G),$ $i=0,1,$ in view of the pseudo-differential calculus properties for sums operators.
    \item $A_{i},$ $i=0,1$, are self-adjoint operators.
\end{itemize}Note that any entry $(A_{i}(\xi_{ij}),\xi_{ij})\in \mathbb{R
}$ is a real number because of the self-adjointness of $A.$ Now, let us use a specific property of the Peter-Weyl basis \eqref{Peter:Weyl}. 
In \cite{CardonaDelgadoRuzhanksyLOcalWeyl}
the following Local Weyl-formula was obtained for any pseudo-differential operator $\tilde{A}\in \Psi^0(G):$
\begin{equation}\label{Preliminary:formula}
  \sum_{|\xi|\leq \lambda}  \sum_{i,j=1}^{d_\xi}(\tilde{A}(d_\xi^{\frac{1}{2}}\xi_{ij}),d_\xi^{\frac{1}{2}}\xi_{ij})= (2\pi)^{-n}C_{n,\tilde A}\lambda^n+O(\lambda^{n-1}),
\end{equation}for any $\lambda>0.$ Moreover, in \cite{CardonaDelgadoRuzhanksyLOcalWeyl} the left-hand side of this identity was simplified as follows
\begin{equation}\label{equality}
    \sum_{|\xi|\leq \lambda}  \sum_{i,j=1}^{d_\xi}d_\xi(\tilde A\xi_{ij},\xi_{ij})=  \sum_{|\xi|\leq \lambda}d_\xi\smallint\limits_{G}\textnormal{Tr}[\sigma_{\tilde A}(x,\xi)]\textnormal{d}x.
\end{equation}Let \begin{equation}
       \tilde A_{0}:= \textnormal{Re}(\tilde A)=(\tilde A+{\tilde A}^{*})/2,\,\,\tilde A_{1}:=\textnormal{Im}(\tilde A)=(\tilde A-{\tilde A}^{*})/2i.
    \end{equation}

Because these identities are valid for general operators of order $m>0,$ for $k=0,1$ we also have that
\begin{equation}\label{Preliminary:formula:2}
  \sum_{|\xi|\leq \lambda}  \sum_{i,j=1}^{d_\xi}(\tilde A_k(d_\xi^{\frac{1}{2}}\xi_{ij}),d_\xi^{\frac{1}{2}}\xi_{ij})= (2\pi)^{-n}C_{n,\tilde A_k}\lambda^n+O(\lambda^{n-1}),
\end{equation}for any $\lambda>0,$ where  the constant $C_{n,\tilde{A}_k} $ is given by
\begin{equation}\label{The:Constant}
  C_{n,\tilde{A}_k}=   \textnormal{Av}[\sigma_{loc,\tilde A_k}]:=\smallint\limits_{ T^{*}\mathbb{S}(G)}\sigma_{loc,\tilde A_k}(x,\eta)d\mu_{L}(x,\eta).
\end{equation}
As before we can write
\begin{equation}\label{eqref:symmetry}
    \sum_{|\xi|\leq \lambda}  \sum_{i,j=1}^{d_\xi}d_\xi(\tilde A_k\xi_{ij},\xi_{ij})=  \sum_{|\xi|\leq \lambda}d_\xi\smallint\limits_{G}\textnormal{Tr}[\sigma_{\tilde A_k}(x,\xi)]\textnormal{d}x,
\end{equation}where that the left hand side of \eqref{eqref:symmetry} is real valued. Note that in view of \eqref{Preliminary:formula:2} and \eqref{eqref:symmetry}  we have the identity
\begin{equation}\label{eqref:symmetry:22}
      \sum_{|\xi|\leq \lambda}d_\xi\smallint\limits_{G}\textnormal{Tr}[\sigma_{\tilde A_k}(x,\xi)]\textnormal{d}x= (2\pi)^{-n}C_{n,\tilde A_k}\lambda^n+O(\lambda^{n-1}).
\end{equation} Note that
\begin{align*}
    \sum_{\frac{\lambda}{2}<|\xi|\leq \lambda}d_\xi\smallint\limits_{G}\textnormal{Tr}[\sigma_{\tilde A_k}(x,\xi)]\textnormal{d}x&=\sum_{|\xi|\leq \lambda}d_\xi\smallint\limits_{G}\textnormal{Tr}[\sigma_{\tilde A_k}(x,\xi)]\textnormal{d}x-\sum_{|\xi|\leq\frac{\lambda}{2} }d_\xi\smallint\limits_{G}\textnormal{Tr}[\sigma_{\tilde A_k}(x,\xi)]\textnormal{d}x\\
    &=(2\pi)^{-n}C_{n,A_k}(1-\frac{1}{2^n})\lambda^n+O(\lambda^{n-1})\\
    &=(2\pi)^{-n}\tilde{C}_{n,\tilde A_k}\lambda^n+O(\lambda^{n-1}).
\end{align*}Let us apply the local Weyl formula above to the operator $\tilde A=A\mathcal{L}_G^{-\frac{m}{2}}\in \Psi^{0}(G).$  The matrix-valued symbol of $\tilde A$ and $\tilde A_k,$ $k=0,1,$ are given by $$\sigma_{\tilde A}(x,[\xi])=\sigma_{A}(x,[\xi])| \xi |^{-m},\,\,\sigma_{\tilde A_k}(x,[\xi])=\sigma_{A_k}(x,[\xi])|\xi|^{-m},\,\, (x,[\xi])\in G\times \widehat{G},$$ where 
$$|\xi|^{-m}:=\sqrt{\lambda_{[\xi]}}^{-m},\,\textnormal{ if }\xi\neq 1_{\widehat{G}}, \,\,|1_{\widehat{G}}|^{-m}:=0,$$ where $1_{\widehat{G}}$ is the trivial representation. 
Because of the self-adjointness of $\tilde A_k,$ the left-hand side (and the right-hand side) of \eqref{eqref:symmetry:22} is real-valued. 
Note that the linearity of the trace gives the identity $$ \textnormal{Tr}(A)= \textnormal{Tr}(A_0)+i\textnormal{Tr}(A_1).$$ So, let us  estimates the traces $\textnormal{Tr}(A_k),$ $k=0,1.$
Indeed,
\begin{align*}
    \textnormal{Tr}(A_k)&=\lim_{\lambda\rightarrow \infty} \sum_{i,j=1}^{d_\xi}d_\xi( A_k\xi_{ij},\xi_{ij})= \lim_{\lambda\rightarrow \infty}  \sum_{|\xi|\leq \lambda}d_\xi\smallint\limits_{G}\textnormal{Tr}[\sigma_{ A_k}(x,\xi)]\textnormal{d}x\\
    &=\sum_{k=0}^\infty  \sum_{[\xi]\in \widehat{G}:2^{k-1}<\langle \xi\rangle \leq 2^{k}}d_\xi\smallint\limits_{G}\textnormal{Tr}[\sigma_{ A_k}(x,\xi)]\textnormal{d}x\\
    &=\sum_{k=0}^\infty 2^{km}  \sum_{[\xi]\in \widehat{G}:2^{k-1}<\langle \xi\rangle \leq 2^{k}}d_\xi  2^{-km} \smallint\limits_{G}\textnormal{Tr}[\sigma_{ A_k}(x,\xi)]\textnormal{d}x\\
    &\asymp \sum_{k=0}^\infty 2^{km}  \sum_{[\xi]\in \widehat{G}:2^{k-1}<\langle \xi\rangle \leq 2^{k}}d_\xi \langle \xi\rangle^{-m} \smallint\limits_{G}\textnormal{Tr}[\sigma_{ A_k}(x,\xi)]\textnormal{d}x\\
     &=\sum_{k=0}^\infty 2^{km}  \sum_{[\xi]\in \widehat{G}:2^{k-1}<\langle \xi\rangle \leq 2^{k}}d_\xi  \smallint\limits_{G}\textnormal{Tr}[\sigma_{ A_k}(x,\xi)\langle \xi\rangle^{-m}]\textnormal{d}x\\
     &\asymp \sum_{k=0}^\infty 2^{km}  \sum_{[\xi]\in \widehat{G}:2^{k-1}<\langle \xi\rangle \leq 2^{k}}d_\xi  \smallint\limits_{G}\textnormal{Tr}[\sigma_{ A_k}(x,\xi)|\xi|^{-m}]\textnormal{d}x\\
     &\asymp \sum_{k=0}^\infty 2^{km}  \sum_{[\xi]\in \widehat{G}:2^{k-1}<| \xi| \leq 2^{k}}d_\xi  \smallint\limits_{G}\textnormal{Tr}[\sigma_{ \tilde A_k}(x,\xi)]\textnormal{d}x.
\end{align*}Now, using the local Weyl formula we get 
\begin{align*}
  \textnormal{Tr}(A_k)&\asymp \sum_{k=0}^\infty 2^{km}  \sum_{[\xi]\in \widehat{G}:2^{k-1}<| \xi| \leq 2^{k}}d_\xi  \smallint\limits_{G}\textnormal{Tr}[\sigma_{ \tilde A_k}(x,\xi)]\textnormal{d}x\\
  &\asymp  \sum_{k=0}^\infty 2^{km} \left( (2\pi)^{-n}\tilde{C}_{n,\tilde A_k}2^{kn}+O(2^{k(n-1)}\right)\\
  &\asymp  \sum_{k=0}^\infty \left( \tilde{C}_{n,\tilde A_k}2^{k(n+m)}+O(2^{k(n+m-1)}\right).
\end{align*}Now, it is clear that if $m\geq -n,$then  the geometric series $$  \sum_{k=0}^\infty \left( \tilde{C}_{n,\tilde A_k}2^{k(n+m)}+O(2^{k(n+m-1)}\right)$$  diverges for $k=0$ or for $k=1,$ which also implies that the trace of $T$ diverges. To see this observe that the constant in \eqref{The:Constant} cannot be equal to zero simultaneously for $k=0,1.$ Indeed, from \eqref{Average:symbol} we have that
\begin{equation}
    \textnormal{Av}[\sigma_{loc,A}]=\tilde{C}_{n,\tilde A_0}+i\tilde{C}_{n,\tilde A_1}\neq 0.
\end{equation}
We illustrate this geometric fact in Figure  \ref{Average} below.
\begin{figure}[h]
\includegraphics[width=8.5cm]{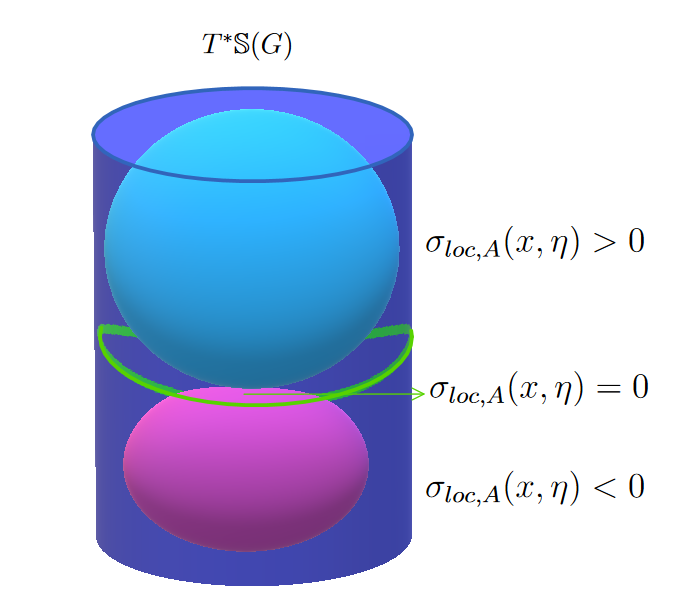}\\
\caption{The average condition on the co-sphere in the case of a real-valued principal symbol $\sigma_A.$ In this case the positive part of the symbol dominates the region where the symbol is negative. The green curve represents the level curve at height zero (that occurs along the zero section).}
 \label{Average}
\centering
\end{figure} 
Note that we have used that on the co-sphere $T^{*}\mathbb{S}(G)$ one has the identity
\begin{equation}
    \forall(x,\eta)\in T^{*}\mathbb{S}(G),\,\sigma_{\tilde A}(x,\eta)=\sigma_{ A}(x,\eta)\| \eta\|^{-m}=\sigma_{ A}(x,\eta),
\end{equation}because the fact that $(x,\eta)\in T^*\mathbb{S}(G)$ implies that $x\in M$ and that the norm $\|\eta\|$ of $\eta$ on the corresponding fiber is equal to one.

Thus we have proved that $T$ is not in the ideal $\mathscr{S}_1(L^2(G))$ which contradicts our initial hypothesis. The proof of Proposition \ref{Invariance:trace} is complete.
\end{proof}

As a consequence of the previous corollary we have the following characterisation of the trace class pseudo-differential operators.

\begin{theorem}\label{Invariance:trace2} Let $A\in \Psi^{m}_{1,0}(G)$ be a classical  pseudo-differential operator of order $m\leq 0$. 
Then, $A\in \mathscr{S}_1(L^2(G))$ if and only if $m<-n.$    
\end{theorem}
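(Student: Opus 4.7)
The plan is to reduce to Proposition \ref{Invariance:trace} via a multiplier trick that forces the average of the principal symbol on the co-sphere to become strictly positive, while keeping the operator in $\mathscr{S}_1(L^2(G))$. The ``if'' direction $m<-n \Rightarrow A\in\mathscr{S}_1(L^2(G))$ is immediate from Corollary \ref{Schatten:properties:cor} with $r=1$, so I focus on the ``only if'' direction: given $A \in \Psi^m_{1,0}(G)$ classical of order $m\leq 0$ with $A \in \mathscr{S}_1(L^2(G))$, deduce $m < -n$. One may assume the principal symbol $\sigma_{loc,A}$ does not vanish identically on the co-sphere $T^*\mathbb{S}(G)$; otherwise $A$ actually lies in $\Psi^{m-1}_{1,0}(G)$ and the argument can be iterated, eventually either producing a non-trivial principal symbol or certifying that $A$ is smoothing (in which case the conclusion holds vacuously).

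The key step is to replace $A$ by the auxiliary operator
\[
D := F A, \qquad F := (1+\mathcal{L}_G)^{-m/2} A^{*}.
\]
Here $F$ is a composition of classical pseudo-differential operators of orders $-m$ and $m$ respectively, hence $F$ is classical of order zero. By Theorem \ref{RTcalculus:Group} (Calder\'on-Vaillancourt), $F$ is bounded on $L^2(G)$. Since $\mathscr{S}_1(L^2(G))$ is a two-sided ideal of $\mathscr{B}(L^2(G))$ and $A\in \mathscr{S}_1(L^2(G))$, we get $D = FA \in \mathscr{S}_1(L^2(G))$. Moreover $D$ is classical of order $m$, and using the standard formulas that the principal symbol of the adjoint is the complex conjugate and the principal symbol of $(1+\mathcal{L}_G)^{-m/2}$ equals $\|\xi\|_g^{-m}$, the local principal symbol of $D$ is
\[
\sigma_{loc,D}(x,\xi) = \|\xi\|_g^{-m}\,|\sigma_{loc,A}(x,\xi)|^2.
\]
Restricting to $T^*\mathbb{S}(G) = \{\|\eta\|_g = 1\}$ kills the factor $\|\xi\|_g^{-m}$, so $\sigma_{loc,D}|_{T^*\mathbb{S}(G)} = |\sigma_{loc,A}|^2 \geq 0$ and
\[
\textnormal{Av}[\sigma_{loc,D}] = \smallint\limits_{T^*\mathbb{S}(G)} |\sigma_{loc,A}(x,\eta)|^2\, d\mu_L(x,\eta) > 0,
\]
since $\sigma_{loc,A}$ is a nonzero continuous function on the co-sphere. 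Proposition \ref{Invariance:trace} applied to $D$ then delivers the desired inequality $m < -n$.

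The main conceptual point in this strategy is the choice of the multiplier $F$: its principal symbol $\|\xi\|_g^{-m}\,\overline{\sigma_{loc,A}}$ is designed precisely so that the product $\sigma_{loc,F}\,\sigma_{loc,A}$ collapses to a non-negative quantity with strictly positive co-sphere average, thereby bypassing the non-vanishing average hypothesis of Proposition \ref{Invariance:trace}. I expect no essential obstacle beyond the routine bookkeeping of identifying the principal symbol of $F$ and of the composition $FA$ within the classical/polyhomogeneous calculus on $G$; this follows from the pseudo-differential calculus stated in Theorem \ref{RTcalculus:Group} together with the standard behavior of principal symbols under taking adjoints and composing with powers of $(1+\mathcal{L}_G)$.
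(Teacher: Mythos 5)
Your argument is correct, and it reaches the conclusion by a genuinely different route from the paper. Both proofs dispose of the ``if'' direction via Corollary \ref{Schatten:properties:cor} and both reduce the ``only if'' direction to Proposition \ref{Invariance:trace}, but they remove the non-vanishing-average hypothesis in different ways. The paper splits into cases according to whether $\textnormal{Av}[\sigma_{loc,A}]$ vanishes and, in the vanishing case, perturbs \emph{additively}, setting $\tilde A = A+(\mathcal{L}_G)^{-n-\varepsilon}$ so that the trace-class property is preserved while the co-sphere average becomes $\textnormal{Vol}(T^*\mathbb{S}(G))\neq 0$. You instead perturb \emph{multiplicatively}: $D=(1+\mathcal{L}_G)^{-m/2}A^{*}A$ is trace class by the ideal property, is classical of the same order $m$, and has principal symbol $\|\xi\|_g^{-m}\,|\sigma_{loc,A}|^2$, whose restriction to $T^{*}\mathbb{S}(G)$ is non-negative and hence has strictly positive average as soon as $\sigma_{loc,A}\not\equiv 0$ there. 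Your version treats the ``nonzero symbol but zero average'' case and the ``nonzero average'' case uniformly, and it sidesteps a delicate point in the additive argument (when $m\geq -n$ the added term is of strictly lower order than $A$, so identifying the displayed integral with the average of the \emph{principal} symbol of $\tilde A$ requires care). The one soft spot in your write-up is the degenerate case $\sigma_{loc,A}\equiv 0$ on the co-sphere: if the iteration terminates with a smoothing operator, the conclusion $m<-n$ is not ``vacuously'' true, since a smoothing operator is trace class while its nominal order $m$ may well be $\geq -n$. But this only reflects the implicit convention in the statement of the theorem that $A$ has order \emph{exactly} $m$, i.e.\ a non-trivial degree-$m$ principal part; under that reading your reduction is automatic, and the paper's own proof relies on the same convention.
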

\begin{proof} Note that if the average of the principal symbol $\sigma_{loc,A}\in C^\infty(T^*G)$ of $A$ on the co-sphere is non-zero, that is
\begin{equation}\label{Average:symbol:traceclass:trace}
    \textnormal{Av}[\sigma_{loc,A}]:=\smallint\limits_{ T^{*}\mathbb{S}(G)}\sigma_{loc,A}(x,\eta)d\mu_{L}(x,\eta)\neq 0,
\end{equation}then the statement follows from Proposition \ref{Invariance:trace}. On the other hand, assume that $A\in \mathscr{S}_1(L^2(G))$ and that 
\begin{equation}
    \textnormal{Av}[\sigma_{loc,A}]:=\smallint\limits_{ T^{*}\mathbb{S}(G)}\sigma_{loc,A}(x,\eta)d\mu_{L}(x,\eta)= 0.
\end{equation}Define the operator $\tilde{A}=A+(\mathcal{L}_G)^{-n-\varepsilon},$ $\varepsilon>0.$ Note that $\tilde A\in  \mathscr{S}_1(L^2(G)) $ since $\Vert \tilde A\Vert_{\mathscr{S}_1}\leq \Vert  A\Vert_{\mathscr{S}_1}+\Vert \tilde (\mathcal{L}_G)^{-n-\varepsilon}\Vert_{\mathscr{S}_1}<\infty. $ Moreover by definition of $\tilde{A}$ we have
\begin{equation}\label{average}
    \textnormal{Av}[\sigma_{loc,\tilde A}]:=\smallint\limits_{ T^{*}\mathbb{S}(G)}||\eta||^{-n-\varepsilon}_gd\mu_{L}(x,\eta)=\textnormal{Vol}(T^*\mathbb{S}(G))\neq 0.
\end{equation}Note that if $m\geq -n,$ then the order of $\tilde A$ would be larger than $-n$. However,  the condition in  \eqref{average} together with the fact that $\tilde A\in \mathscr{S}_1(L^2(G))$ would imply that $m<-n$ in view of Proposition \ref{Invariance:trace}, which is a contradiction. Thus, one has that $m<-n$ as expected. The proof is complete.   
\end{proof}

Now, we will use the ideal properties of the Schatten classes to derive the extension of Proposition \ref{Invariance:trace} to the case $r>1.$ 
\begin{proposition}\label{Invariance:trace:2} Let $A\in \Psi^{m}_{1,0}(G)$ be a classical pseudo-differential operator of order $m\in \mathbb{R}$. Assume that the average of the principal symbol $\sigma_{loc,A}\in C^\infty(T^*G)$ of $A$ on the co-sphere is non-zero, that is
\begin{equation}\label{Average:symbol:22}
    \textnormal{Av}[\sigma_{loc,A}]:=\smallint\limits_{ T^{*}\mathbb{S}(G)}\sigma_{loc,A}(x,\eta)d\mu_{L}(x,\eta)\neq 0.
\end{equation}
For any $1<r< \infty,$ if $A\in \mathscr{S}_r(L^2(G))$ then $m\leq -n/r.$ Moreover, for $r=\infty,$     $A\in \mathscr{S}_\infty(L^2(G))=\mathscr{B}(L^2(G))$ if and only if $m\leq 0.$
\end{proposition}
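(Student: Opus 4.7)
The plan is to reduce the statement for $1<r<\infty$ to the trace-class case already established in Theorem \ref{Invariance:trace2} by composing $A$ on the right with a suitably chosen Bessel potential and then invoking H\"older's inequality for Schatten classes. More precisely, I fix $r\in(1,\infty)$, let $r'=r/(r-1)$ denote the conjugate exponent, and for any $s<-n/r'$ I set $B_{s}=(1+\mathcal{L}_G)^{s/2}$. By the equivalence $\textnormal{(2)}\Leftrightarrow\textnormal{(5)}$ in Theorem \ref{Schatten:properties:} (applied to the elliptic operator $B_{s}$), one has $B_{s}\in\mathscr{S}_{r'}(L^2(G))$. Since $\mathscr{S}_{r}$ and $\mathscr{S}_{r'}$ are H\"older-dual Schatten ideals with $1/r+1/r'=1$, the ideal property of the Schatten classes yields
\begin{equation*}
AB_{s}\in\mathscr{S}_{1}(L^2(G)).
\end{equation*}

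On the other hand, by the composition properties of the classical Kohn--Nirenberg calculus, $AB_{s}\in\Psi^{m+s}_{1,0}(G)$ is classical of order $m+s$ with principal symbol $\sigma_{loc,AB_{s}}(x,\eta)=\sigma_{loc,A}(x,\eta)\,\|\eta\|_{g}^{s}$. Since the co-sphere is $\{\|\eta\|_{g}=1\}$, this restricts on $T^{*}\mathbb{S}(G)$ to $\sigma_{loc,A}$ itself, so
\begin{equation*}
\textnormal{Av}[\sigma_{loc,AB_{s}}]=\textnormal{Av}[\sigma_{loc,A}]\neq 0.
\end{equation*}
I would then apply Theorem \ref{Invariance:trace2} to the trace-class classical operator $AB_{s}$ to conclude that its order satisfies $m+s<-n$, i.e.\ $m<-n-s$. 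Optimising over admissible $s$ by letting $s\nearrow -n/r'$ produces
\begin{equation*}
m\le -n+\frac{n}{r'}=-n\!\left(1-\frac{1}{r'}\right)=-\frac{n}{r},
\end{equation*}
as claimed.

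For the remaining case $r=\infty$, the implication $m\leq 0\Rightarrow A\in\mathscr{B}(L^2(G))$ is the Calder\'on--Vaillancourt theorem (Theorem \ref{RTcalculus:Group}). For the converse I would repeat the argument above with $B_{s}$, $s<-n$, which lies in $\mathscr{S}_{1}(L^2(G))$ by Theorem \ref{Schatten:properties:}; the ideal property gives $AB_{s}\in\mathscr{S}_{1}(L^2(G))$, and the previous reasoning applied to the classical operator $AB_{s}$ of order $m+s$ with the same (non-zero) co-sphere average yields $m+s<-n$ for every such $s$, so $m\leq 0$ after letting $s\nearrow -n$.

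The main obstacle is not conceptual but lies in making sure the supremum argument is genuinely sharp: one has to verify that the principal symbol calculus composes as stated (so the non-zero average of $\sigma_{loc,A}$ is transferred verbatim to $\sigma_{loc,AB_{s}}$ via the restriction to $T^{*}\mathbb{S}(G)$), and that the one-parameter family of witnesses $B_{s}$ with $s<-n/r'$ exhausts the critical threshold. This method does not seem to yield the strict inequality $m<-n/r$ when $r\notin\mathbb{Z}$, which is exactly the content of Open Problem 2 stated above.
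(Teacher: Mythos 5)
Your argument is correct and is essentially the paper's own proof: both compose $A$ on the right with a negative power of the Laplacian of order just below the dual-exponent threshold $-n/r'$, use the H\"older/ideal property of Schatten classes to land in $\mathscr{S}_1(L^2(G))$, observe that the principal symbol restricted to $T^{*}\mathbb{S}(G)$ (and hence its average) is unchanged, invoke the trace-class result, and pass to the limit in the auxiliary parameter, with the same scheme for $r=\infty$. The only cosmetic differences are your use of the Bessel potential $(1+\mathcal{L}_G)^{s/2}$ in place of $\mathcal{L}_G^{s/2}$ and a supremum over $s$ in place of the paper's explicit $\varepsilon\to 0^{+}$ (and a contradiction argument for $r=\infty$).
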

\begin{proof}
 Let us consider first, the case where $1<r<\infty.$ Let $\varepsilon>0.$ Consider the operator $\mathcal{L}_G^{s/2}$ where $s$ satisfies
$$ s:=-\frac{n}{q}-\varepsilon<-\frac{n}{q},$$ and where $q:=r/(r-1)$ is the conjugate exponent of $r.$ Then, $q$ is given by the identity 
$1=\frac{1}{q}+\frac{1}{r}.$ Note that since $r>1,$ one has that $q>0.$ Proposition \ref{Invariance:trace} implies that $\mathcal{L}_G^{s/2}\in \mathscr{S}_q(L^2(G)).$ Since $q$ and $r$ and conjugate exponents, we have that
\begin{equation*}
    \tilde{A}:=A\mathcal{L}_G^{s/2}\in \mathscr{S}_1(G).
\end{equation*}Note that the principal symbol $\sigma_{loc,  \tilde{A}}$ satisfies the identity
\begin{equation}\label{symbol:prin}
   \forall(x,\eta)\in T^{*}G\setminus \{0\},\,\, \sigma_{loc,  \tilde{A}}(x,\eta)=\sigma_{loc,  {A}}(x,\eta)\|\eta \|^{s}.
\end{equation}In particular, on the co-sphere we have
\begin{equation}
   \forall(x,\eta)\in T^{*}\mathbb{S},\,\,\,\, \sigma_{loc,  \tilde{A}}(x,\eta)=\sigma_{loc,  {A}}(x,\eta).
\end{equation}Then, \eqref{Average:symbol:22} allows to deduce the non-vanishing property
\begin{equation*}
    \textnormal{Av}[\sigma_{loc,A}]:=\smallint\limits_{ T^{*}\mathbb{S}(G)}\sigma_{loc,\tilde A}(x,\eta)d\mu_{L}(x,\eta)\neq 0,
\end{equation*}allowing the use of the Proposition \ref{Invariance:trace} in order to conclude that the order $m+s$ of $\tilde A\in \mathscr{S}_1$ satisfies the inequality 
$m+s<-n.$ Then, we have that 
$$  m<-s-n=\frac{n}{q}+\varepsilon -n=n\left(\frac{1}{q}-1\right)+\varepsilon =-\frac{n}{r}+\varepsilon\,, $$
where the last hold tru for any $\epsilon>0$.
Taking $\varepsilon\rightarrow 0^+$ we have that $m\leq -n/r.$

Now, let us assume that $r=\infty.$   For $m\leq 0,$ it follows from the Calder\'on-Vaillancourt theorem that $A\in \mathscr{S}_{\infty}(L^2(G))=\mathscr{B}(L^2(G)).$  Now, let us assume that $A\in \mathscr{S}_\infty(L^2(G))=\mathscr{B}(L^2(G))$ and let us give an argument proving that its order satisfies $m\leq 0.$ Proceeding by contradiction suppose that  $m>0.$ Note that the conjugate exponent to $r=\infty$ is $q=1.$ Let $\varepsilon\in (0,m)$ and let $s=-n-\varepsilon.$ Consider the operator $\mathcal{L}_G^{s/2}\in \mathscr{S}_1(L^2(G))$ where $s<-n$. Then  the operator
\begin{align*}
   \tilde{A}:=A\mathcal{L}_G^{s/2}\in \mathscr{S}_\infty(L^2(G))\circ \mathscr{S}_1(L^2(G))\subseteq \mathscr{S}_1(L^2(G)),
\end{align*} is of trace class. Since the principal symbol is given by \eqref{symbol:prin} the co-sphere condition in \eqref{Average:symbol:22} is satisfied and Proposition  \ref{Invariance:trace} implies that the order of $\tilde A$ satisfies the inequality $m+s<-n.$ We have that $$ m+s= m-n-\varepsilon= (m-\varepsilon)-n\geq -n, $$ which contradicts the analysis above. So, we must have that $m\leq 0. $  
 The proof of Proposition \ref{Invariance:trace:2} is complete.
\end{proof}
\begin{corollary}\label{Classificaton:r}
    Assume that $r\in (1,\infty)\cap \mathbb{Z}.$ Let $A\in \Psi^{m}_{1,0}(G)$ be a classical pseudo-differential operator of order $m$. Assume that the average of the principal symbol $\sigma_{loc,A}\in C^\infty(T^*G)$ of $A$ on the co-sphere is non-zero, that is
\begin{equation}\label{Average:symbol:22:cor}
    \textnormal{Av}[\sigma_{loc,A}]:=\smallint\limits_{ T^{*}\mathbb{S}(G)}\sigma_{loc,A}(x,\eta)d\mu_{L}(x,\eta)\neq 0.
\end{equation}
Then, $A\in \mathscr{S}_r(L^2(G))$  if and only if    $m<-n/r.$ 
\end{corollary}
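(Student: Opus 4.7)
The sufficiency $m<-n/r\Rightarrow A\in\mathscr{S}_r(L^2(G))$ is immediate from Corollary \ref{Schatten:properties:cor} and does not use either the classicality of $A$ or the hypothesis on the co-sphere average. Hence the content of the corollary lies entirely in the necessity direction, where Proposition \ref{Invariance:trace:2} already provides the non-strict estimate $m\leq -n/r$; the role of the hypothesis $r\in\mathbb{Z}$ will be to upgrade this to the strict inequality $m<-n/r$.

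The plan is to manufacture from $A$ a classical pseudo-differential operator $B$ that is simultaneously trace class and of operator order exactly $rm$, and then to invoke Theorem \ref{Invariance:trace2} to force $rm<-n$. Since $r\in(1,\infty)\cap\mathbb{Z}$, I would write $r=2k$ or $r=2k+1$ according to parity, with $k\geq 1$, and set
\begin{equation*}
B:=\begin{cases} (A^*A)^k, & \textnormal{if } r=2k,\\ A(A^*A)^k, & \textnormal{if } r=2k+1. \end{cases}
\end{equation*}
H\"older's inequality for Schatten ideals, combined with $\|A^*\|_{\mathscr{S}_r}=\|A\|_{\mathscr{S}_r}$, yields $B\in\mathscr{S}_1(L^2(G))$: one has $A^*A\in\mathscr{S}_{r/2}$, hence $(A^*A)^k\in\mathscr{S}_{r/(2k)}$, which equals $\mathscr{S}_1$ in the even case; in the odd case, one further multiplication by $A\in\mathscr{S}_r$ lands $A(A^*A)^k$ in $\mathscr{S}_{1/(1/r+2k/r)}=\mathscr{S}_1$. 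Moreover, $B$ is a composition of classical operators in $\Psi^m_{1,0}(G)$ and is therefore itself classical, with principal symbol $|\sigma_{loc,A}|^{2k}$ if $r=2k$ and $\sigma_{loc,A}\,|\sigma_{loc,A}|^{2k}$ if $r=2k+1$.

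The hypothesis $\textnormal{Av}[\sigma_{loc,A}]\neq 0$ rules out that $\sigma_{loc,A}$ vanishes identically on $T^*\mathbb{S}(G)$, and by homogeneity $\sigma_{loc,A}\not\equiv 0$ on $T^{*}G\setminus 0$; hence the principal symbol of $B$ does not vanish identically either, so the operator order of $B$ is exactly $rm$. Since Proposition \ref{Invariance:trace:2} gives $m\leq -n/r<0$, this order satisfies $rm\leq -n<0$, and Theorem \ref{Invariance:trace2} applied to $B\in\Psi^{rm}_{1,0}(G)\cap\mathscr{S}_1(L^2(G))$ forces $rm<-n$, equivalently $m<-n/r$, completing the proof. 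The delicate point, and the place where $r\in\mathbb{Z}$ is essential, is that only for integer $r$ can one build $B$ inside the classical calculus: for non-integer $r$ the natural candidate $(A^*A)^{r/2}$ involves a fractional power of a possibly non-elliptic operator and is not a priori a classical pseudo-differential operator. A secondary, more routine, subtlety to check is that no symbolic cancellation drops the order of $B$ strictly below $rm$, but this is precisely what the non-vanishing-average hypothesis prevents.
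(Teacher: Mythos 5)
Your proof is correct and follows essentially the same route as the paper: the paper also reduces to the trace-class case by forming an $r$-fold product that lands in $\mathscr{S}_1(L^2(G))$ with order $rm$ (it uses $A^r=A\circ\cdots\circ A$ where you use $(A^*A)^k$ or $A(A^*A)^k$) and then invokes the $r=1$ characterisation to exclude the borderline $m=-n/r$. Your variant via $A^*A$ is, if anything, marginally cleaner in the even case, since the principal symbol $|\sigma_{loc,A}|^{2k}$ is non-negative and not identically zero, so its co-sphere average is automatically non-zero, whereas for $A^r$ (and for your odd case) one must appeal to the average-free Theorem \ref{Invariance:trace2} because the average of the $r$-th power of the principal symbol need not survive.
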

\begin{proof}From Corollary \ref{Schatten:properties:cor} it follows that for $m<-n/r,$ $1<r<\infty,$ we have $A\in \mathscr{S}_r(L^2(G)).$ Now, let us prove the converse statement, that is if $A\in \mathscr{S}_r(L^2(G))$, then $m<-n/r.$ To do this, let us show that the borderline $m=-n/r$ in Proposition \ref{Invariance:trace:2} is not possible. To do this, observe that if $A\in \mathscr{S}_r,$ then
\begin{equation}
    A^r=A\circ A\circ\cdots A\in \mathscr{S}_r(L^2(G))\circ \mathscr{S}_r(L^2(G))\circ\cdots \mathscr{S}_r(L^2(G))\subseteq \mathscr{S}_{1}(L^2(G)),
\end{equation} where the composition is taken $r$-times. However, the order of the trace class operator $A^r$ is $-n$ and this contradicts the conclusion in Proposition \ref{Invariance:trace}. So, necessarily $m<-n/r.$     
\end{proof}

\begin{theorem}\label{Invariance:trace2r} Let $A\in \Psi^{m}_{1,0}(G)$ be a classical  pseudo-differential operator of order $m$.
\begin{itemize}
    \item[(A)] If $r\in [1,\infty)\cap \mathbb{Z},$ then
$A\in \mathscr{S}_r(L^2(G))$ if and only if $m<-n/r.$ For $r=\infty,$     $A\in \mathscr{B}(L^2(G))$ if and only if $m\leq 0.$ 
\item[(B)] If $r\in (1,\infty)\setminus \mathbb{Z},$ and   $A\in \mathscr{S}_{r}(L^2(G)),$ then $m\leq -n/r.$ Moreover, if $A$ is elliptic then one has the strict inequality $m<-n/r.$
\end{itemize}
\end{theorem}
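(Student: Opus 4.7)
The plan is to reduce both parts to the trace class case established in Theorem~\ref{Invariance:trace2}, via the ideal structure of the Schatten classes and the H\"older inequality $\mathscr{S}_{p_1}\circ\mathscr{S}_{p_2}\subseteq\mathscr{S}_{p}$ with $1/p=1/p_1+1/p_2$. The sufficiency directions are immediate: $m<-n/r\Longrightarrow A\in\mathscr{S}_r(L^2(G))$ is Corollary~\ref{Schatten:properties:cor}, and $m\leq 0\Longrightarrow A\in\mathscr{B}(L^2(G))$ is the Calder\'on--Vaillancourt bound contained in Theorem~\ref{RTcalculus:Group}. All the content therefore sits in the necessity arguments, and the main objective is to bypass the nonzero-average hypothesis required by Proposition~\ref{Invariance:trace:2}.

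I would start with the boundedness case $r=\infty$ of (A). Suppose $A\in\mathscr{B}(L^2(G))$ is classical of order $m>0$, and pick any $s\in(n,m+n)$. By Theorem~\ref{Schatten:properties:} the Bessel potential $B_{-s}:=(1+\mathcal{L}_G)^{-s/2}$ lies in $\mathscr{S}_1(L^2(G))$, and the ideal property gives $A\,B_{-s}\in\mathscr{S}_1(L^2(G))$, a classical operator of order $m-s$. Theorem~\ref{Invariance:trace2} then forces $m-s<-n$, i.e., $s>m+n$, contradicting the choice of $s$; hence $m\leq 0$. For integer $r\in(1,\infty)$, the inclusion $\mathscr{S}_r\subseteq\mathscr{B}(L^2(G))$ and the previous step already yield $m\leq 0$. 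Iterated H\"older then produces $A^r\in\mathscr{S}_1(L^2(G))$, which is classical of order $mr\leq 0$, so Theorem~\ref{Invariance:trace2} gives $mr<-n$, i.e., $m<-n/r$. The case $r=1$ is Theorem~\ref{Invariance:trace2} itself, completing part (A).

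For part (B) I would use the conjugate-exponent trick. Let $r\in(1,\infty)\setminus\mathbb{Z}$ and set $q:=r/(r-1)$. Again $A\in\mathscr{S}_r\subseteq\mathscr{B}(L^2(G))$ gives $m\leq 0$. For every $s>n/q=n(r-1)/r$, Theorem~\ref{Schatten:properties:} gives $B_{-s}\in\mathscr{S}_q(L^2(G))$, whence H\"older yields
\begin{equation*}
A\,B_{-s}\in\mathscr{S}_r\circ\mathscr{S}_q\subseteq\mathscr{S}_1(L^2(G)),
\end{equation*}
a classical operator of order $m-s<0$. Theorem~\ref{Invariance:trace2} then forces $m-s<-n$ for every $s>n(r-1)/r$; letting $s\downarrow n(r-1)/r$ gives $m+n\leq n(r-1)/r$, i.e., $m\leq -n/r$. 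The strengthening to the strict inequality $m<-n/r$ under ellipticity is exactly the equivalence $(1)\Longleftrightarrow(5)$ of Theorem~\ref{Schatten:properties:}.

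The main technical point I expect to verify carefully is that compositions such as $A^r$ and $A\,B_{-s}$ remain classical polyhomogeneous operators inside the Ruzhansky--Turunen matrix-valued calculus, so that Theorem~\ref{Invariance:trace2}, which is stated for classical operators, genuinely applies. This reduces to the standard asymptotic expansion for products of polyhomogeneous symbols, combined with the composition stability in Theorem~\ref{RTcalculus:Group}; the delicate part is executing it in the global framework on $G$ in terms of the matrix-valued symbols rather than appealing to the local coordinate picture.
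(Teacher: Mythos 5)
Your argument is correct and proves the theorem, but it is organised quite differently from the paper's proof, and in a way that is arguably cleaner. The paper's route for part (A) with $1<r\le\infty$ integer goes through the average-dependent results (Proposition \ref{Invariance:trace:2} and Corollary \ref{Classificaton:r}, which require $\textnormal{Av}[\sigma_{loc,A}]\neq 0$) and then disposes of the zero-average case by perturbing $A$ to $A+(\mathcal{L}_G)^{-n/r-\varepsilon}$ so as to restore the nonvanishing average; for part (B) it reduces to the integer exponent $[r]$ via an auxiliary H\"older factor $(1+\mathcal{L}_G)^{-s/2}\in\mathscr{S}_{n/s+\varepsilon_0'}$ whose existence is settled by an intermediate-value argument. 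You instead observe that once Theorem \ref{Invariance:trace2} (the average-free trace-class characterisation) is available, every case can be sent directly to $r=1$: the power trick $A^r\in\mathscr{S}_1$ for integer $r$ and the conjugate-exponent factorisation $AB_{-s}\in\mathscr{S}_r\circ\mathscr{S}_q\subseteq\mathscr{S}_1$ for non-integer $r$. This bypasses the entire case analysis on $\textnormal{Av}[\sigma_{loc,A}]$ (it is absorbed once and for all in Theorem \ref{Invariance:trace2}) and also sidesteps a point the paper glosses over, namely that the average of the principal symbol of $A^r$ is $\textnormal{Av}[(\sigma_{loc,A})^r]$ rather than a power of $\textnormal{Av}[\sigma_{loc,A}]$; your reduction only needs $(\sigma_{loc,A})^r\not\equiv 0$, which is automatic. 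Your (B) is in substance the conjugate-exponent argument of the paper's own Proposition \ref{Invariance:trace:2}, upgraded by replacing Proposition \ref{Invariance:trace} with Theorem \ref{Invariance:trace2}.

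One small point to tighten: Theorem \ref{Invariance:trace2} is stated for classical operators of order $\le 0$, so when you apply it to $AB_{-s}$ in the $r=\infty$ step you must ensure $m-s\le 0$; for large $m$ the choice ``any $s\in(n,m+n)$'' does not guarantee this, but taking $s\in(\max(n,m),m+n)$ (a nonempty interval when $m>0$) does, and the contradiction $s>m+n$ goes through unchanged. The remaining technical point you flag --- that $A^r$ and $AB_{-s}$ stay polyhomogeneous of the expected exact order, since the principal symbol of a composition is the product of principal symbols and hence nonvanishing --- is indeed what makes the reduction legitimate, and it is used implicitly by the paper as well.
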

\begin{proof}
Let us prove the statement in (A).
For $r=1$ this statement  has been proved in Theorem \ref{Invariance:trace2}. At this stage of the manuscript we only have to prove that if  $A\in  \mathscr{S}_r(L^2(G)) $ then $m\leq -n/r,$ where $n/r:=0$ for $r=\infty.$  Now, let us consider the case where $1<r\leq \infty,$ with $r\in \mathbb{Z}.$ 

Note that if the average of the principal symbol $\sigma_{loc,A}\in C^\infty(T^*G)$ of $A$ on the co-sphere is non-zero, that is
\begin{equation}
    \textnormal{Av}[\sigma_{loc,A}]:=\smallint\limits_{ T^{*}\mathbb{S}(G)}\sigma_{loc,A}(x,\eta)d\mu_{L}(x,\eta)\neq 0,
\end{equation}then the statement follows from Corollary \ref{Classificaton:r}. On the other hand, assume that $A\in \mathscr{S}_1(L^2(G))$ and that 
\begin{equation}
    \textnormal{Av}[\sigma_{loc,A}]:=\smallint\limits_{ T^{*}\mathbb{S}(G)}\sigma_{loc,A}(x,\eta)d\mu_{L}(x,\eta)= 0.
\end{equation}Define the operator $\tilde{A}=A+(\mathcal{L}_G)^{-n/r-\varepsilon},$ $\varepsilon>0,$ where $n/r:=0$ whenever $r=\infty.$ Note that $\tilde A\in  \mathscr{S}_r(L^2(G)) $ since $\Vert \tilde A\Vert_{\mathscr{S}_r}\leq \Vert \tilde A\Vert_{\mathscr{S}_r}+\Vert \tilde (\mathcal{L}_G)^{-n/r-\varepsilon}\Vert_{\mathscr{S}_r}<\infty. $ On the other hand note that 
\begin{equation}\label{averageP}
    \textnormal{Av}[\sigma_{loc,\tilde A}]:=\smallint\limits_{ T^{*}\mathbb{S}(G)}||\eta||^{-n/r-\varepsilon}_gd\mu_{L}(x,\eta)=\textnormal{Vol}(T^*\mathbb{S}(G))\neq 0.
\end{equation}Now that if $m\geq -n/r,$ then the order of $\tilde A$ must be larger than $-n/r$ (or strictly larger than zero for $r=\infty$) but the condition in  \eqref{averageP} together with the fact that $\tilde A\in \mathscr{S}_r(L^2(G))$  implies that $m<-n/r$ in view of Proposition \ref{Invariance:trace}, which is a contradiction. Thus, one has that $m<-n/r$ for $1<r<\infty$ and $m\geq 0$ for $r=\infty,$ as expected.

Now, let us prove the statement in (B). It can be done following  the proof of Proposition \ref{Invariance:trace:2}. However, let us give another argument. For this, consider $r\in (1,\infty)\setminus \mathbb{Z}.
$ Then, the integer part $[r]$ of $r$ satisfies the strict inequalities
\begin{equation}
    [r]<r<[r]+1.
\end{equation} Let $\varepsilon>0$ and define the parameter
\begin{equation}\label{S:p}
    s=n\left(\frac{1}{[r]}-\frac{1}{r}\right)+\varepsilon.
\end{equation}Since $A\in \mathscr{S}_r
$ and  for any $\varepsilon'>0,$ $(1+\mathcal{L}_G)^{-\frac{s}{2}}\in \mathscr{S}_{\frac{n}{s}+\varepsilon'},$ we will prove that there exists $\varepsilon_0'>0$ such that $A_{s}:=A(1+\mathcal{L}_G)^{-\frac{s}{2}}\in \mathscr{S}_{[r]}.$ To do this, we have to guarantee, that there exists $\varepsilon_0'>0$ such that, the H\"older property 
\begin{equation}\label{Equation:e}
    \frac{1}{[r]}=\frac{1}{r}+\frac{1}{\frac{n}{s}+\varepsilon_0'}
\end{equation}holds because its validity would imply that $A_{s}\in \mathscr{S}_{r}\circ \mathscr{S}_{\frac{n}{s}+\varepsilon_0'}\subseteq \mathscr{S}_{[r]}. $ To prove that the equation in \eqref{Equation:e} admits a solution, define the continuous function 
\begin{equation}
    g(\varepsilon'):=\frac{1}{\frac{n}{s}+\varepsilon'}.
\end{equation} Note that 
\begin{equation}
    g(0)=\frac{s}{n}>\frac{1}{[r]}-\frac{1}{r}>0=g(\infty):=\lim_{\varepsilon'\rightarrow \infty}g(\varepsilon').
\end{equation}By Bolzano's theorem, there exists $\varepsilon_0'\in (0,\infty)$ such that $g(\varepsilon_0')= \frac{1}{[r]}-\frac{1}{r}$ as desired. We have now proved the existence of $\varepsilon_0'$ in in \eqref{Equation:e} and its implication $A_{s}\in \mathscr{S}_{[r]}.$ Hence by the result in Part (A), the order $m-s$ of $A_s,$ satisfies the inequality $m-s<-{n}/{[r]},$ or equivalently $m<s-({n}/{[r]}).$ In view of \eqref{S:p}, we have proved that
\begin{equation}
    \forall \varepsilon>0,\,\,\, m<
    s=n\left(\frac{1}{[r]}-\frac{1}{r}\right)+\varepsilon-\frac{n}{[r]}=-\frac{n}{r}+\varepsilon,
\end{equation}which certainly implies that $m\leq -n/r.$ Note that if $A$ is elliptic, this inequality can be improved to an strict inequality, namely we would have $m<-n/r,$ as proved in Theorem \ref{Schatten:properties:}. The proof is complete.   
\end{proof}

We end this subsection with the following atypical construction in the case of the classes $\Psi^{m}_{0,0}(\mathbb{T}^n
)$ on the $n$-dimensional torus.

\begin{theorem}\label{The:atypical:case}
    For any $\varkappa>0,$ there exists a {non-elliptic} pseudo-differential operator $A$ in the exotic class   $\Psi^{-\varkappa}_{0,0}(\mathbb{T}^n
)\setminus \Psi^{-\varkappa-\varepsilon}_{0,0}(\mathbb{T}^n
), $ for all $\varepsilon>0,$ that belongs to all the Schatten ideals $\mathscr{S}_r(L^2(\mathbb{T}^n)),$ where $r>0.$
\end{theorem}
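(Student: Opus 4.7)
The plan is to construct a pure Fourier multiplier $A = \operatorname{Op}(a)$ on $\mathbb{T}^n$ whose symbol $a : \mathbb{Z}^n \to \mathbb{C}$ is supported on a very thin (lacunary) subset of $\mathbb{Z}^n$. Concretely, fix a unit vector $e_1 \in \mathbb{Z}^n$ and the lacunary sequence $\xi_k := 2^k e_1$ for $k \geq 1$, and set
\begin{equation*}
    a(\xi) := \langle \xi \rangle^{-\varkappa} \, \chi_{S}(\xi), \qquad S := \{\xi_k : k \geq 1\}.
\end{equation*}
Since $a$ does not depend on $x$, both the construction and its verification reduce to statements about the sequence $a : \mathbb{Z}^n \to \mathbb{C}$.

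\textbf{Membership in $\Psi^{-\varkappa}_{0,0}(\mathbb{T}^n)$ and sharpness.} On the torus, the standard difference operators $\mathbb{D}_j$ reduce to forward differences $\mathbb{D}_j a(\xi) = a(\xi + e_j) - a(\xi)$, so $\mathbb{D}^\alpha a(\xi)$ is a finite linear combination (with bounded coefficients depending only on $\alpha$) of the values $a(\xi+\beta)$ with $|\beta| \leq |\alpha|$. By the lacunarity of $S$, for each fixed $\alpha$ and every $\xi \in \mathbb{Z}^n$ with $|\xi|$ sufficiently large, the ball $\xi + [0,|\alpha|]^n$ contains at most one element of $S$; hence at most one term $a(\xi+\beta)$ is nonzero, and that term is bounded by $\langle \xi + \beta \rangle^{-\varkappa} \lesssim \langle \xi \rangle^{-\varkappa}$. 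Thus $|\mathbb{D}^\alpha a(\xi)| \lesssim_\alpha \langle \xi \rangle^{-\varkappa}$, which means $a \in S^{-\varkappa}_{0,0}(\mathbb{T}^n \times \mathbb{Z}^n)$. Conversely, $a(\xi_k) = \langle \xi_k \rangle^{-\varkappa}$, while $\langle \xi_k \rangle^\varepsilon \to \infty$ as $k \to \infty$; this prevents the estimate $|a(\xi)| \lesssim \langle \xi \rangle^{-\varkappa - \varepsilon}$ for any $\varepsilon > 0$, so $a \notin S^{-\varkappa - \varepsilon}_{0,0}$. Non-ellipticity is immediate: $a$ vanishes on $\mathbb{Z}^n \setminus S$, which has positive density, so $|a(\xi)| \geq c \langle \xi \rangle^{-\varkappa}$ fails for $|\xi|$ large.

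\textbf{Schatten membership for all $r > 0$.} Since $A$ is a Fourier multiplier and the characters $e^{2\pi i \xi \cdot x}$ form an orthonormal eigenbasis of $A$, the singular values of $A$ are exactly $\{|a(\xi)|\}_{\xi \in \mathbb{Z}^n}$, so for any $r \in (0,\infty)$,
\begin{equation*}
    \|A\|_{\mathscr{S}_r}^r = \sum_{\xi \in \mathbb{Z}^n} |a(\xi)|^r = \sum_{k=1}^{\infty} \langle 2^k \rangle^{-\varkappa r} \lesssim \sum_{k=1}^{\infty} 2^{-k\varkappa r} < \infty,
\end{equation*}
because $\varkappa r > 0$. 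Therefore $A \in \mathscr{S}_r(L^2(\mathbb{T}^n))$ for all $r > 0$ simultaneously, concluding the construction.

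\textbf{Expected difficulty.} The only mildly delicate point is the verification that the differences $\mathbb{D}^\alpha a$ do not blow up faster than $\langle \xi \rangle^{-\varkappa}$; this is handled cleanly by the lacunarity, which ensures the ``separation'' $|\xi_{k+1} - \xi_k| = 2^k \to \infty$, so a fixed $|\alpha|$-neighbourhood of $S$ eventually isolates each $\xi_k$. Everything else is a direct consequence of the diagonal nature of Fourier multipliers on $\mathbb{T}^n$, so I do not anticipate a serious obstacle.
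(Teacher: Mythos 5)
Your construction is exactly the one used in the paper: the lacunary multiplier $a(\xi)=\langle\xi\rangle^{-\varkappa}$ supported on $\{2^k e_1\}$, with the symbol estimates checked by bounding each term $|a(\xi+\beta)|\leq\langle\xi+\beta\rangle^{-\varkappa}\lesssim_\alpha\langle\xi\rangle^{-\varkappa}$, non-ellipticity from the vanishing off the lacunary set, and Schatten membership from $\sum_k 2^{-k\varkappa r}<\infty$ via the diagonal nature of the multiplier. The only cosmetic difference is that you certify $a\notin S^{-\varkappa-\varepsilon}_{0,0}$ from the zeroth-order estimate while the paper evaluates a first difference at $\xi_k$; both are valid, so the proposal is correct and essentially identical to the paper's proof.
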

\begin{proof}Let $\varkappa>0,$ and let $(e_{j})_{j=1}^n$ be the canonical orthonormal basis of $\mathbb{R}^n,$  and consider the set 
\begin{equation}
    \mathbb{D}=\{ 2^{k}e_{1}=(2^k,0,\cdots, 0):k\in \mathbb{N}
    \}.
\end{equation}Define the symbol
\begin{equation}
    a(\xi)=\langle\xi\rangle^{-\varkappa},\,\,\xi\in \mathbb{D};\,\,a(\xi)=0,\,\xi\in \mathbb{Z}^n\setminus \mathbb{D}.
\end{equation}Observe that for any $\alpha\in \mathbb{N}_0^n,$ we have
\begin{align*}
    |\Delta^\alpha a(\xi)|\leq \sum_{\beta\leq \alpha}{{\alpha}\choose{\beta}}|a(\xi+\beta)|\leq \sum_{\beta\leq \alpha}{{\alpha}\choose{\beta}}\langle \xi+\beta\rangle^{-\varkappa}\lesssim_{\alpha}(1+|\xi|)^{-\varkappa },\,\,\xi\in \mathbb{Z}^n. 
\end{align*}To see that the right-hand side of this inequality  is the best possible, we consider the case where $\alpha=e_1$ and when $\xi_k=2^{k}e_{1}\in \mathbb{D}.$ Indeed, note that
\begin{align*}
    |\Delta^{e_1}a(\xi_k)| &=|a(\xi_k+e_1)-a(\xi_k)|=|\langle 2^ke_1 \rangle^{-\varkappa}|
    = \langle \xi_k\rangle^{-\varkappa}  .
\end{align*} This argument proves that the estimate $|\Delta^{e_1}a(\xi)| \leq \langle \xi\rangle^{-\varkappa}  ,$ is sharp and even that it is an equality when $\xi_k\in \mathbb{D}.$    

To prove that the operator $A=\textnormal{Op}(a)$ associated to the symbol $a$ belongs to the Schatten class $\mathscr{S}_{r}(L^2(\mathbb{T}^n))$ note that the sequence of singular values of $A$ is determined by the values of its symbol, that is
\begin{equation}
    s_{\xi}(A)=|a(\xi)|,\,\,\,\xi\in \mathbb{Z}^n,
\end{equation} are all the singular values of $A.$  Consequently
\begin{align*}
    \sum_{\xi\in \mathbb{Z}^n
    }s_\xi(A)^r &=\sum_{\xi_k\in \mathbb{D}} s_\xi(A)^r
\asymp \sum_{k=1}^\infty 2^{-k\varkappa r}<\infty.
\end{align*}Now, to prove that $A$ is not elliptic and that it is of order $m=-\varkappa,$ observe that the best estimate from below that $a$ satisfies is the following
\begin{equation}
    |a(\xi)|\geq 0,
\end{equation}with equality when $\xi\in\mathbb{Z}^n\setminus \mathbb{D}. $ The proof of this atypical case is complete.    
\end{proof}

\subsection{Classical operators in Schatten classes and their principal symbols}\label{Classical:symbols:section}

We consider the subclass $\Psi_0^m(G)$ in  $\Psi_{cl}^{m/2}(G)$  of
operators with homogeneous symbols of
order $m.$  First we have
the following result where we show that only the  principal  homogeneous component of the principal symbols contains the information about the membership of a classical operator to any Schatten class $\mathscr{S}(L^2(G)).$

\begin{proposition}\label{Prop:RedToHom}
Let $r\in (0,\infty ]$ and $m\in \mathbb{R}$. Then
the following conditions are equivalent:
\begin{enumerate}
\item $\Psi ^m_{cl}(G)\subseteq \mathscr{S}_r(L^2(G))$;

\item $\Psi ^m_0(G)\subseteq \mathscr{S}_r(L^2(G))$.
\end{enumerate}
\end{proposition}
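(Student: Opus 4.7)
The direction $(1)\Rightarrow (2)$ is immediate from the inclusion $\Psi^m_0(G)\subseteq \Psi^m_{cl}(G)$. The content lies in the converse, and my plan is to reduce Schatten membership for the entire classical class to an order condition forced by a single elliptic reference operator.

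First I would produce an elliptic reference operator $B\in \Psi^m_0(G)$ of exact order $m$. A convenient choice is, in local coordinates, a symbol of the form $b(x,\xi)=\chi(\xi)\,|\xi|_{g}^{m}$, where $\chi$ is an excision function that vanishes near $\xi=0$ and is identically $1$ for $|\xi|\ge 1$; patching via a partition of unity produces an operator $B$ on $G$ whose polyhomogeneous expansion consists of only the single degree-$m$ term $|\xi|_{g}^{m}$ (so $B\in \Psi^m_0(G)$) and whose principal symbol is non-vanishing on $T^{*}G\setminus 0$ (so $B$ is elliptic).

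Next I would read off the order constraint from $B$. By hypothesis (2), $B\in \mathscr{S}_r(L^2(G))$. Since $B\in \Psi^m_{1,0}(G)$ is elliptic and classical, Theorem~\ref{Schatten:properties:} (applied with $\rho=1$, $\delta=0$) yields the equivalence (1)$\Leftrightarrow$(5), forcing
\[
m<-\tfrac{n}{r}\quad\text{for}\ r\in(0,\infty),
\]
while for $r=\infty$ part (A) of Theorem~\ref{Invariance:trace2r} gives $m\le 0$. Thus the hypothesis (2) on the subclass $\Psi^m_0(G)$ is enough to pin down the critical order relation.

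Finally I would conclude (1) by bootstrapping through the sharp order criterion. For $r\in(0,\infty)$, Corollary~\ref{Schatten:properties:cor}, which requires no ellipticity, gives $\Psi^m_{cl}(G)\subseteq \Psi^m_{1,0}(G)\subseteq \mathscr{S}_r(L^2(G))$ whenever $m<-n/r$. For $r=\infty$ the Calder\'on--Vaillancourt statement in Theorem~\ref{RTcalculus:Group} gives $\Psi^m_{1,0}(G)\subseteq \mathscr{B}(L^2(G))=\mathscr{S}_\infty(L^2(G))$ whenever $m\le 0$. Combining these with the order bound obtained from $B$ yields (1).

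The only delicate point is the construction of an elliptic element of $\Psi^m_0(G)$ for every $m\in \mathbb{R}$; once that routine local construction is in place, the rest of the argument is a direct invocation of previously established results of the paper and involves no further computation.
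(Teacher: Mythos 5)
Your argument is correct in substance but follows a genuinely different route from the paper's. The paper's proof never touches the numerical threshold $-n/r$: it decomposes an arbitrary classical operator into its polyhomogeneous components, writes the degree-$(m-k)$ component as an element of $\Psi^m_0(G)$ composed with the bounded operator $B^k=\mathcal{L}_G^{-k/2}$, and then concludes by the ideal property of $\mathscr{S}_r$ together with the fact that $\Psi^{m-N}\subseteq \mathscr{S}_r(L^2(G))$ for $N$ large. That argument is ``soft'': it would work verbatim for any quasi-normed operator ideal containing all operators of sufficiently negative order, and it treats $r\in(0,\infty]$ uniformly. Your argument instead extracts the sharp order condition from a single elliptic test operator in $\Psi^m_0(G)$ via Theorem~\ref{Schatten:properties:} (resp.\ Theorem~\ref{Invariance:trace2r}(A) for $r=\infty$) and then re-imports the sufficiency of that condition through Corollary~\ref{Schatten:properties:cor} and the Calder\'on--Vaillancourt theorem. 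This is ``harder'' machinery, but it buys more: it shows that each of (1) and (2) is equivalent to $m<-n/r$ (resp.\ $m\le 0$ for $r=\infty$), which the paper only records afterwards as Corollary~\ref{Invariance:trace2r:cor}. There is no circularity, since Theorem~\ref{Schatten:properties:} and Corollary~\ref{Schatten:properties:cor} are established before this proposition.

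Two points need patching. First, condition (1) of Theorem~\ref{Schatten:properties:} is the conjunction ``$m<0$ \emph{and} $A\in\mathscr{S}_r$'', so from $B\in\mathscr{S}_r$ alone you cannot invoke the equivalence $(1)\Leftrightarrow(5)$; you must first rule out $m\ge 0$. This is one line: if $m\ge 0$, an elliptic operator of nonnegative order is not compact (a parametrix $P$ gives $PB=I+R$ with $R$ smoothing, so compactness of $B$ would force compactness of the identity), hence hypothesis (2) already fails and there is nothing to prove; otherwise $m<0$ and the theorem applies. Second, your local construction of the reference operator is shakier than you suggest: patching $\chi(\xi)|\xi|_g^m$ through a partition of unity produces lower-order homogeneous terms in the full symbol, so the result lies in $\Psi^m_{cl}(G)$ but not obviously in the subclass $\Psi^m_0(G)$ of operators with a single homogeneous component. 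The cleaner choice, consistent with the paper's own conventions (it treats $\mathcal{L}_G^{-k/2}$ as the prototype of a homogeneous operator of order $-k$), is the left-invariant operator $\mathcal{L}_G^{m/2}$, suitably corrected on the constants, whose global symbol is $\lambda_{[\xi]}^{m/2}I_{d_\xi}$ and whose principal symbol $\|\eta\|_g^m$ is elliptic and exactly homogeneous. With these two adjustments your proof is complete.
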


\begin{proof}
Evidently, (1) implies (2). Suppose that (2) holds.
We shall prove that (1) is true. If $N$ is large enough,
then $\Psi ^{m-N}\subseteq \mathscr{S}_r(L^2(G))$. Observing that the operator $B=\sqrt{\mathcal{L}}_G^{-1}$ is continuous on $L^2(G)$, 
we get that $\Psi _0^mB^k\in \mathscr{S}_r(L^2(G))$ for any $k\in \mathbb{N}$. Since, the principal symbol of $B^{k},$ which is given by $$\sigma_{B^k}(x,\eta)=\Vert \eta\Vert_{g}^{-k},\,\,\forall x\in G,\,\forall \eta\in \mathfrak{g}\setminus \{0\},$$   is homogeneous of order $-k,$ we have that
$$
\Psi _0^mB^k = \Psi _0^{m-k}\, 
\operatorname{mod}\, \Psi^{-\infty}
(G\times \widehat G).
$$
Note that $\textnormal{Op}(\Psi^{-\infty} (G\times \widehat G))\subseteq \mathscr{S}_r(L^2(G)).$ Then, we get
$$
\Psi _{cl}^m = \sum _{k=0}^{N-1}\Psi _0^mB^k
+ \Psi _{cl}^{m-N}\, 
\operatorname{mod}\, \Psi^{-\infty}
(G\times \widehat G)
\subseteq \mathscr{S}_r(L^2(G)),
$$ in view of our hypothesis  $\Psi ^m_0\subseteq \mathscr{S}_r(L^2(G)),$
which shows that (1) holds when (2) holds. This
gives the result.
\end{proof}

In view of Theorem \ref{Invariance:trace2r} we have the following characterisation.
\begin{corollary}\label{Invariance:trace2r:cor} Let $A\in \Psi^{m}_{0}(G)$ be of order $m\in \mathbb{R}$.
\begin{itemize}
    \item[(A).] If $r\in [1,\infty)\cap \mathbb{Z},$ then
$A\in \mathscr{S}_r(L^2(G))$ if and only if $m<-n/r.$ For $r=\infty,$     $A\in \mathscr{B}(L^2(G))$ if and only if $m\leq 0.$ 
\item[(B).] If $r\in (1,\infty)\setminus \mathbb{Z},$ and   $A\in \mathscr{S}_{r}(L^2(G)),$ then $m\leq -n/r.$ Moreover, if $A$ is elliptic then one has the strict inequality $m<-n/r.$
\end{itemize}
\end{corollary}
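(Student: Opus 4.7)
The plan is to obtain Corollary \ref{Invariance:trace2r:cor} as a direct specialization of Theorem \ref{Invariance:trace2r} to the subclass $\Psi_0^m(G) \subseteq \Psi_{cl}^m(G)$ of operators with purely homogeneous principal symbols. First I would record the trivial inclusion $\Psi_0^m(G)\subseteq \Psi_{cl}^m(G)\subseteq \Psi_{1,0}^m(G)$: an operator whose full symbol is homogeneous of order $m$ has a polyhomogeneous expansion reduced to its leading term, so it is in particular a classical pseudo-differential operator of order $m$ in the sense used in Theorem \ref{Invariance:trace2r}. This single observation transports all hypotheses from the corollary to the theorem.

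With that reduction, both parts follow by quoting Theorem \ref{Invariance:trace2r}. For (A), if $r\in[1,\infty)\cap\mathbb{Z}$, the sufficiency $m<-n/r \Rightarrow A\in\mathscr{S}_r(L^2(G))$ is already contained in Corollary \ref{Schatten:properties:cor} (which does not even require classicality), whereas the necessity $A\in\mathscr{S}_r(L^2(G))\Rightarrow m<-n/r$ is exactly the content of part (A) of Theorem \ref{Invariance:trace2r}, whose proof proceeds through the local Weyl formula and the averaging argument of Proposition \ref{Invariance:trace}, upgraded to integer $r>1$ by the ideal composition property $\mathscr{S}_r\circ\cdots\circ\mathscr{S}_r\subseteq\mathscr{S}_1$ ($r$-fold composition). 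The endpoint case $r=\infty$ is also inherited verbatim from Theorem \ref{Invariance:trace2r}, using Calder\'on--Vaillancourt for $L^2$-boundedness in one direction and the perturbation $\tilde A=A+\mathcal{L}_G^{-s/2}$ (with the average condition \eqref{Average:symbol:22} forced by the Laplacian term) in the other.

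For (B), when $r\in(1,\infty)\setminus\mathbb{Z}$, the bound $m\leq -n/r$ follows from part (B) of Theorem \ref{Invariance:trace2r} via the interpolation-style trick used there: compose with $(1+\mathcal{L}_G)^{-s/2}$ to shift the order so the new operator lies in $\mathscr{S}_{[r]}$, then apply the already-proved integer case and let $\varepsilon\to 0^+$. The strict inequality $m<-n/r$ in the elliptic case is inherited from the equivalence $\textnormal{(1)}\Longleftrightarrow\textnormal{(5)}$ in Theorem \ref{Schatten:properties:}, since an elliptic element of $\Psi_0^m(G)$ is a fortiori an elliptic element of $\Psi_{\rho,\delta}^m(G\times\widehat{G})$ with $\rho=1$, $\delta=0$.

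I do not anticipate a real obstacle: this corollary is a restriction of previously established statements, and the only conceptual step to spell out is the embedding $\Psi_0^m(G)\subseteq\Psi_{cl}^m(G)$, which is immediate. If one wanted to avoid even this trivial observation, an alternative route is to invoke Proposition \ref{Prop:RedToHom} (the equivalence $\Psi_{cl}^m\subseteq\mathscr{S}_r \Leftrightarrow \Psi_0^m\subseteq\mathscr{S}_r$) in the converse direction, but for individual operators the direct inclusion is already sufficient.
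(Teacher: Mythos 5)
Your proposal is correct and matches the paper's own treatment: the paper derives Corollary \ref{Invariance:trace2r:cor} directly from Theorem \ref{Invariance:trace2r} via the inclusion $\Psi^m_0(G)\subseteq \Psi^m_{cl}(G)$, exactly as you do. The additional remarks you make (citing Corollary \ref{Schatten:properties:cor} for sufficiency and Theorem \ref{Schatten:properties:} for the elliptic strict inequality) are consistent with how those ingredients already enter the proof of Theorem \ref{Invariance:trace2r} itself.
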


\subsection{Order of classical operators vs order of global symbols}\label{Last:Subsection}
As an application of the methods developed in this manuscript we prove that the order of a symbol characterises the order of the operator.

\begin{proposition}\label{Toft:criterion}
    Let $\mu,t\in \mathbb{R}$ and let $A\in \Psi^{\mu}_{1,0}(G)$ be a classical pseudo-differential operator such that its global symbols satisfies
    \begin{equation}\label{growth:t2}
      \forall(x,[\xi])\in G\times \widehat{G},\,  \Vert \sigma_A(x,[\xi])\Vert_{\textnormal{op}}\leq C\langle\xi\rangle^{t}.
    \end{equation}Then, $A\in \Psi^{t}_{1,0}(G),$ provided that   the average of its principal symbol $\sigma_{loc,A}\in C^\infty(T^*G)$ on the co-sphere is non-zero:
\begin{equation}\label{Average:symbol:22:2}
    \textnormal{Av}[\sigma_{loc,A}]:=\smallint\limits_{ T^{*}\mathbb{S}(G)}\sigma_{loc,A}(x,\eta)d\mu_{L}(x,\eta)\neq 0.
\end{equation}
\end{proposition}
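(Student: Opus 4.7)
Since $\Psi^\mu_{1,0}(G) \subseteq \Psi^t_{1,0}(G)$ whenever $\mu \leq t$, it suffices to prove $\mu \leq t$. My strategy is to convert the zero-order symbol bound \eqref{growth:t2} into a Hilbert--Schmidt estimate for a Bessel regularisation of $A$, and then run the Schatten characterisation of Theorem \ref{Invariance:trace2r}(A) backwards to extract a bound on the classical order.

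For each $s>0$, I would form
$$
A_s := A \circ (1+\mathcal{L}_G)^{-s/2}.
$$
Because the Bessel potential $(1+\mathcal{L}_G)^{-s/2}$ is a left-invariant Fourier multiplier whose matrix-valued symbol $\langle \xi\rangle^{-s} I_{d_\xi}$ is scalar, the composition of symbols is \emph{exact}: $\sigma_{A_s}(x,[\xi]) = \sigma_A(x,[\xi])\,\langle\xi\rangle^{-s}$. Consequently $A_s$ is classical of order $\mu-s$, and its principal symbol $\sigma_{loc,A}(x,\eta)\,\|\eta\|_g^{-s}$ (homogeneous of order $\mu-s$) reduces on the cosphere $T^*\mathbb{S}(G)$, where $\|\eta\|_g=1$, to $\sigma_{loc,A}$ itself. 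Hence by \eqref{Average:symbol:22:2} the cosphere average of the principal symbol of $A_s$ is again non-zero.

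Next, I invoke the Plancherel identity $\|T\|_{\mathscr{S}_2}^2 = \int_G \sum_{[\xi]} d_\xi \|\sigma_T(x,[\xi])\|_{\textnormal{HS}}^2\,\textnormal{d}x$ (as in the equivalence $(1)\Leftrightarrow(3)$ of Theorem \ref{Schatten:properties:}), together with the elementary bound $\|M\|_{\textnormal{HS}}^2 \leq d_\xi \|M\|_{\textnormal{op}}^2$ and the hypothesis \eqref{growth:t2}, to estimate
$$
\|A_s\|_{\mathscr{S}_2}^2 \;\leq\; C \sum_{[\xi]\in \widehat G} d_\xi^{\,2}\,\langle\xi\rangle^{2(t-s)}.
$$
By Lemma 3.8 of \cite{DelRuzTrace11} the right-hand side is finite precisely when $s>t+n/2$, so $A_s \in \mathscr{S}_2(L^2(G))$ for every such $s$. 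Applying Theorem \ref{Invariance:trace2r}(A) with $r=2$ to the classical operator $A_s$ of order $\mu-s$ then gives the equivalence $A_s\in\mathscr{S}_2 \Longleftrightarrow \mu-s<-n/2$, i.e.\ $s>\mu+n/2$. Letting $s\searrow t+n/2$ forces $\mu+n/2\leq t+n/2$, and so $\mu\leq t$.

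The main obstacle is conceptual rather than technical: a pointwise bound on $\|\sigma_A(x,[\xi])\|_{\textnormal{op}}$ alone contains no information about $x$-derivatives or difference operators, so it cannot by itself yield the symbolic estimates defining $\Psi^t_{1,0}(G)$. The crux is that, once composed with a Bessel potential, the classical structure of $A$ combined with this zero-order bound is already strong enough to force membership in $\mathscr{S}_2$ through the plain Plancherel identity, at which point the sharp Schatten classification of classical operators --- whose proof genuinely requires the cosphere average condition \eqref{Average:symbol:22:2} to be preserved under the composition with $(1+\mathcal{L}_G)^{-s/2}$ --- converts this membership back into the quantitative order bound $\mu\leq t$.
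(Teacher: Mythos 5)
Your argument is correct, but it reaches the conclusion by a genuinely different route from the paper. The paper first reduces to the case $t=0$ by composing with $(1+\mathcal{L}_G)^{-t/2}$ and then runs an induction on the integer part of $\mu$: at each step it composes with one more factor of $(1+\mathcal{L}_G)^{-1/2}$, invokes an $L^2$-boundedness theorem (Corollary 2.2 of \cite{RuzhanskyTurunen2011}, i.e.\ a Calder\'on--Vaillancourt-type result) to pass from the bounded symbol to $A\in\mathscr{B}(L^2(G))$, and then uses the $r=\infty$ endpoint of Proposition \ref{Invariance:trace:2} to force the order down. You instead make a single composition $A_s=A(1+\mathcal{L}_G)^{-s/2}$ with $s$ just above $t+n/2$, observe that the symbol composition is exact because the Bessel potential is a scalar left-invariant multiplier, and land in $\mathscr{S}_2$ via the elementary Plancherel identity for Hilbert--Schmidt operators together with $\Vert M\Vert_{\textnormal{HS}}^2\leq d_\xi\Vert M\Vert_{\textnormal{op}}^2$ and Lemma 3.8 of \cite{DelRuzTrace11}; the $r=2$ case of the Schatten classification of classical operators then converts $A_s\in\mathscr{S}_2$ into $\mu-s<-n/2$, and letting $s\searrow t+n/2$ gives $\mu\leq t$. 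Your route avoids both the induction and any $L^2$-boundedness theorem, using only the $\mathscr{S}_2$ Plancherel identity, which is arguably more elementary; the paper's route works at the $\mathscr{S}_\infty$ endpoint and so never needs to count dimensions $d_\xi$. Both ultimately rest on the same local Weyl formula machinery (Proposition \ref{Invariance:trace} and its corollaries). One citation should be sharpened: you appeal to Theorem \ref{Invariance:trace2r}(A), whose statement carries no average hypothesis, but its ``only if'' direction is only meaningful when $\mu-s$ is the exact order of $A_s$; the correct reference for your step is Corollary \ref{Classificaton:r}, whose cosphere-average hypothesis you have in fact verified for $A_s$ (the principal symbols of $A$ and $A_s$ coincide on $T^*\mathbb{S}(G)$), so this is a labelling issue rather than a gap.
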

\begin{proof}
   Let us consider two cases: $t=0$ and $t\neq 0.$ 
    \begin{itemize}
        \item {\bf Case 1: $t=0.$} Assume that $ A\in \Psi^{\mu}_{1,0}(G).$ It is clear that if $\mu\leq 0$ we have nothing to prove. So, the relevant case is when $\mu>0.$ First, let us prove the statement in the case where $\mu \in \mathbb{N}.$ To do it, we will proceed using induction. So, let us start by proving that the result is true if $\mu=1.$ Then, assuming that $A\in \Psi^{1}_{1,0}(G)$ and with the additional condition  
   \begin{equation}\label{bounded:symbol}
      \forall(x,[\xi])\in G\times \widehat{G},\,  \Vert \sigma_A(x,[\xi])\Vert_{\textnormal{op}}\leq C,
    \end{equation} we have to deduce that $A\in \Psi^0_{1,0}(G).$ To prove this, we will use Corollary 2.2 of \cite{RuzhanskyTurunen2011} to deduce that $A$ is bounded on $L^2(G),$ and then from   Proposition \ref{Invariance:trace:2} we use the $L^2(G)$-boundedness of $A$ and the average condition in \eqref{Average:symbol:22:2} to deduce that its order $\mu\leq 0.$ This proves the case $\mu=1.$ Now, let us state our induction hypothesis:
    \begin{itemize}
        \item Let us assume that $\mu_0\in \mathbb{N}$ is such that
    any classical pseudo-differential operator $\tilde A\in \Psi^{\mu_0}_{1,0}(G)$ with a bounded operator norm of its matrix-valued symbol
    \begin{equation}\label{bounded:symbol:2}
      \forall(x,[\xi])\in G\times \widehat{G},\,  \Vert \sigma_{\tilde A}(x,[\xi])\Vert_{\textnormal{op}}\leq C,
    \end{equation} and with its principal symbol satisfying the average condition \eqref{Average:symbol:22:2} belongs to the class $\Psi^{0}_{1,0}(G).$
    \end{itemize}
    Now, let $A\in \Psi^{\mu_0+1}_{1,0}(G)$ be a classical pseudo-differential operator whose symbol satisfies \eqref{Average:symbol:22:2} and \eqref{bounded:symbol}.  Define $\tilde{A}=A(1+\mathcal{L}_G)^{-\frac{1}{2}}.$ Note that $\tilde A\in \Psi^{r_0}_{1,0}(G)$ and that its symbol satisfies \eqref{bounded:symbol:2} since
    \begin{align*}
        \Vert \sigma_{ \tilde{A}}(x,[\xi])\Vert_{\textnormal{op}}=\Vert \sigma_{{A}}(x,[\xi])\langle\xi\rangle^{-1}\Vert_{\textnormal{op}}\leq  \Vert \sigma_{A}(x,[\xi])\Vert_{\textnormal{op}}\leq C.
    \end{align*}Our induction hypothesis implies that $\tilde{A}$ belongs to the class $\Psi^0_{1,0}(G).$ But then, the Calder\'on-Vaillancourt theorem implies that $\tilde A$ is bounded on $L^2(G).$ Since on the co-sphere $T^*\mathbb{S}(G)$ the principal symbols of $\tilde A$ and of $A$ agree, $\tilde A$ satisfies the average condition \eqref{Average:symbol:22:2}.  Then, Proposition \ref{Invariance:trace:2} implies that $\mu_0\leq 0.$ Since
    $$ A(1+\mathcal{L}_G)^{-\frac{1}{2}}\in \Psi^{\mu_0}_{1,0}(G)\subseteq \Psi^{0}_{1,0}(G), $$ then $$ A\in \Psi^{\mu_0+1}_{1,0}(G)\subseteq \Psi^{1}_{1,0}(G), $$
    and together with the hypothesis \eqref{bounded:symbol} we can use the base case $\mu=1$ in the inductive process to establish that $A\in \Psi^0_{1,0}(G).$ So, by the mathematical induction we have proved the statement in Theorem \ref{Toft:criterion} in the case where $\mu\in \mathbb{N}.$ Now, in the general case  if $[\mu]$ is the integer part of every $\mu>0,$ and if $A\in \Psi^{\mu}_{1,0}(G)$ is a classical pseudo-differential operator satisfying \eqref{bounded:symbol}, we have that $A\in \Psi^{[\mu]+1}_{1,0}(G)$ and then the better conclusion $A\in \Psi^{0}(G)$ follows from the statement of Theorem \ref{Toft:criterion}  for integer orders. Thus, we have proved Theorem \ref{Toft:criterion} in the case $t=0.$
\item {\bf Case 2. $t\neq 0.$} Assume that $A\in \Psi^{\mu}_{1,0}(G)$ has a matrix-valued symbol satisfying \eqref{growth:t} and the average condition in \eqref{Average:symbol:22:2}. Then, $\tilde{A}=A(1+\mathcal{L}_G)^{-\frac{t}{2}}\in \Psi^{r-t}_{1,0}(G)$ satisfies \eqref{Average:symbol:22:2} and its matrix-valued symbol satisfies the inequality \eqref{bounded:symbol}. From the first part of the poof, we deduce that $\tilde{A}=A(1+\mathcal{L}_G)^{-\frac{t}{2}}\in \Psi^{0}_{1,0}(G).$ Then, the pseudo-differential calculus implies that $A\in \Psi^{t}_{1,0}(G).$
 \end{itemize}
 The proof of Proposition \ref{Toft:criterion} is complete.  
\end{proof}
 Now, we will remove the geometric average condition in  Proposition \ref{Toft:criterion} to prove a general statement on compact Lie groups.

\begin{theorem}\label{Toft:criterion:II}
    Let $\mu,t\in \mathbb{R}$ and let $A\in \Psi^{\mu}_{1,0}(G)$ be a classical pseudo-differential operator such that its global symbol satisfies
    \begin{equation}\label{growth:t}
      \forall(x,[\xi])\in G\times \widehat{G},\,  \Vert \sigma_A(x,[\xi])\Vert_{\textnormal{op}}\leq C\langle\xi\rangle^{t}.
    \end{equation}Then $A\in \Psi^{t}_{1,0}(G).$ 
\end{theorem}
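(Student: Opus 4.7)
The plan is to bootstrap Proposition \ref{Toft:criterion} by composing $A$ with a carefully chosen operator $P$ such that the composition $T=PA$ inherits both a non-vanishing average condition on its principal symbol and a global symbol of strictly negative order. Since a classical operator of order zero whose local symbol decays like $\langle\xi\rangle^\tau$ with $\tau<0$ must have a vanishing degree-zero homogeneous component, this will force the principal symbol of $A$ to vanish, allowing us to descend the classical order of $A$ by one. Iterating gives $A\in\Psi^t_{1,0}(G)$.

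If $\mu\leq t$ there is nothing to prove, so assume $\mu>t$. The crux is the following claim: the principal symbol $\sigma_{loc,A}=a_\mu^A$ of $A$ on $T^*G\setminus 0$ vanishes identically. Suppose for contradiction that $a_\mu^A\not\equiv 0$. Let $P\in\Psi^{-\mu}_{1,0}(G)$ be any classical operator whose principal symbol, viewed as a homogeneous function of degree $-\mu$ on $T^*G\setminus 0$, equals $p(x,\eta):=\overline{a_\mu^A(x,\eta)}\,\|\eta\|_g^{-2\mu}$ (such $P$ exists by surjectivity of the principal symbol map applied to the classical class after multiplication by a standard cutoff vanishing near the zero section). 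Then $T:=PA$ is classical of order $0$, and its principal symbol restricted to the co-sphere $T^*\mathbb{S}(G)$ equals $p\cdot a_\mu^A=|a_\mu^A|^2\geq 0$, which is strictly positive on a non-empty open set by continuity. Consequently,
\begin{equation*}
\textnormal{Av}[\sigma_{loc,T}]=\smallint\limits_{T^*\mathbb{S}(G)}|a_\mu^A(x,\eta)|^2\,d\mu_L(x,\eta)>0.
\end{equation*}

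For the global symbol of $T$, the composition formula for pseudo-differential operators on $G$ (see Theorem \ref{RTcalculus:Group} and \cite{Ruz}) gives $\sigma_T=\sigma_P\sigma_A+R$ with remainder $R\in S^{-\mu+\mu-1}_{1,0}(G\times\widehat G)=S^{-1}_{1,0}$. Using $\|\sigma_P(x,[\xi])\|_{\textnormal{op}}\lesssim\langle\xi\rangle^{-\mu}$ together with the hypothesis $\|\sigma_A(x,[\xi])\|_{\textnormal{op}}\leq C\langle\xi\rangle^t$, we obtain
\begin{equation*}
\|\sigma_T(x,[\xi])\|_{\textnormal{op}}\lesssim\langle\xi\rangle^{t-\mu}+\langle\xi\rangle^{-1}\lesssim\langle\xi\rangle^{\tau},\qquad\tau:=\max(t-\mu,-1)<0.
\end{equation*}
Applying Proposition \ref{Toft:criterion} to $T$ (classical, of order $0$, symbol bound exponent $\tau<0$, non-zero average) yields $T\in\Psi^\tau_{1,0}(G)$. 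But a classical operator of order $0$ whose local symbol is bounded by $\langle\xi\rangle^\tau$ with $\tau<0$ must have its degree-zero homogeneous component vanish identically (evaluate the local symbol along a ray $\xi=\lambda\xi_0$ and let $\lambda\to\infty$), contradicting the strict positivity just computed. Hence $a_\mu^A\equiv 0$, so $A$ is in fact classical of order $\mu-1$. Since the bound $\|\sigma_A\|_{\textnormal{op}}\leq C\langle\xi\rangle^t$ is preserved under this reinterpretation, iterating the argument $\lceil\mu-t\rceil$ times yields $A\in\Psi^{\mu-k}_{1,0}(G)\subseteq\Psi^t_{1,0}(G)$.

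The main obstacle is the presence of the lower-order error $R$ in the composition formula: naively one would expect $\sigma_T$ to be bounded purely by $\langle\xi\rangle^{t-\mu}$, but since the pseudo-differential calculus on $G$ only yields $R\in S^{-1}_{1,0}$ (gain of one order, not of $\mu-t$ orders), we must settle for the exponent $\max(t-\mu,-1)$. Luckily both $t-\mu$ and $-1$ are negative, and any strictly negative bound suffices to annihilate the classical top symbol of $T$, which is all the bootstrap requires. A secondary technical point is that $P$ is built from the homogeneous function $\overline{a_\mu^A}\|\eta\|_g^{-2\mu}$, which is smooth on $T^*G\setminus 0$ and therefore gives rise to an honest classical operator of order $-\mu$ by the standard quantization procedure.
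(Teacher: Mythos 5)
Your proof is correct, but it takes a genuinely different route from the paper's. The paper argues by a dichotomy on $\textnormal{Av}[\sigma_{loc,A}]$: when the average vanishes it perturbs \emph{additively}, setting $\tilde A=A+(\mathcal{L}_G)^{t/2}$, checks that the bound \eqref{growth:t} and a non-zero co-sphere average hold for $\tilde A$, applies Proposition \ref{Toft:criterion} once, and removes $(\mathcal{L}_G)^{t/2}\in\Psi^{t}_{1,0}(G)$ at the end. You normalise \emph{multiplicatively}: composing with an auxiliary classical operator $P$ of order $-\mu$ whose principal symbol is $\overline{a_\mu^A}\,\|\eta\|_{g}^{-2\mu}$ makes the principal symbol of $T=PA$ equal to $|a_\mu^A|^{2}$ on $T^{*}\mathbb{S}(G)$, so the average hypothesis of Proposition \ref{Toft:criterion} is automatic unless $a_\mu^A\equiv 0$; the resulting membership $T\in\Psi^{\tau}_{1,0}(G)$ with $\tau<0$ then kills the degree-zero homogeneous component of $T$, hence $a_\mu^A\equiv 0$, and you descend the classical order one unit at a time. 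Your route costs more: it needs the first-order remainder estimate $\sigma_{PA}-\sigma_P\sigma_A\in S^{-1}_{1,0}(G\times\widehat{G})$ from the global composition calculus, the surjectivity of the principal symbol map, and an induction of length $\lceil\mu-t\rceil$. What it buys is that the non-vanishing average is attached to the \emph{actual} top-degree homogeneous term of the operator fed into Proposition \ref{Toft:criterion} (since $|a_\mu^A|^{2}\ge 0$ cannot average to zero without vanishing identically), whereas the paper's additive perturbation places the non-zero average on the added degree-$t$ term $\|\eta\|_{g}^{t}$, which is not the leading homogeneous component of $\tilde A$ when $\mu>t$; your version is the more careful of the two on this point. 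Both arguments ultimately reduce to Proposition \ref{Toft:criterion}.
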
\begin{proof}
    Note that if  the average of the principal symbol $\sigma_{loc,A}\in C^\infty(T^*G)$ of $A$ on the co-sphere is non-zero:
\begin{equation}
    \textnormal{Av}[\sigma_{loc,A}]:=\smallint\limits_{ T^{*}\mathbb{S}(G)}\sigma_{loc,A}(x,\eta)d\mu_{L}(x,\eta)\neq 0,
\end{equation} the statement follows from  Proposition \ref{Toft:criterion}. On the other hand, if the principal symbol of $A$ has average zero on $T^*\mathbb{S}(G),$ that is, \begin{equation}
    \textnormal{Av}[\sigma_{loc,A}]:=\smallint\limits_{ T^{*}\mathbb{S}(G)}\sigma_{loc,A}(x,\eta)d\mu_{L}(x,\eta)= 0,
\end{equation}we define the operator $\tilde A:=A+(\mathcal{L}_G)^{\frac{t}{2}}.$ Is it clear that the principal symbol of $\tilde{A}$ is given by
$$  \sigma_{loc, \tilde A}:= \sigma_{loc,  A}+\Vert \xi\Vert_{g}^{t}. $$ Note that the matrix-valued symbol of $\tilde A$ satisfies also 
\begin{equation}
  \sigma_{\tilde A}(x,[\xi])=  \sigma_{ A}(x,[\xi])+|\xi|^tI_{d_\xi},\,(x,[\xi])\in G\times \widehat{G}.
\end{equation}From our hypothesis we have  the estimate
\begin{equation*}
    \Vert\sigma_{\tilde A}(x,[\xi])\Vert_{\textnormal{op}}\leq \tilde C\langle\xi \rangle^t, \,(x,[\xi])\in G\times \widehat{G},
\end{equation*}and the average condition 
\begin{equation}
    \textnormal{Av}[\sigma_{loc,\tilde A}]:=\smallint\limits_{ T^{*}\mathbb{S}(G)}\sigma_{loc,A}(x,\eta)d\mu_{L}(x,\eta)+\smallint\limits_{ T^{*}\mathbb{S}(G)}||\eta||_g^{t}d\mu_{L}(x,\eta)=\textnormal{Vol}(T^*G)\neq  0.
\end{equation}From Proposition  \ref{Toft:criterion} follows that $\tilde A \in \Psi^{t}_{1,0}(G)$ and using the property $ (\mathcal{L}_G)^{\frac{t}{2}} \in \Psi^{t}_{1,0}(G),$ one obtains that $A=\tilde A-(\mathcal{L}_G)^{\frac{t}{2}} \in \Psi^{t}_{1,0}(G).$ The proof is complete.
\end{proof}

\bibliographystyle{amsplain}

\begin{thebibliography}{99}


\bibitem{BuzanoNicola2004} Buzano, E., Nicola, N.: Pseudo-differential operators and schatten-von Neumann classes. In: Boggiatto, P., Ashino, R., Wong, M.W. (eds.) Advances in Pseudo-Differential Operators, Proceedings of the Fourth ISAAC Congress, Operator Theory: Advances and Applications. Birkh\"auser Verlag, Basel (2004).

\bibitem{ToftBuzano2007} Buzano, E. Toft, J. Continuity and compactness properties of pseudo-differential operators. Pseudo-differential operators: partial differential equations and time-frequency analysis, 239--253, Fields Inst. Commun., 52, Amer. Math. Soc., Providence, RI, 2007.

\bibitem{BuzanoToft2010}  Buzano, E. Toft, J. Schatten-von Neumann properties in the Weyl calculus. J. Funct. Anal. 259 (2010), no. 12, 3080--3114. 


\bibitem{Cardona2017} Cardona, D. Nuclear pseudo-differential operators in Besov spaces on compact Lie groups. J. Fourier Anal. Appl. 23 (2017), no. 5, 1238--1262. 

\bibitem{Cardona2019} Cardona, D. On the index of pseudo-differential operators on compact Lie groups. J. Pseudo-Differ. Oper. Appl. 10 (2019), no. 2, 285--305.



\bibitem{CardonaDelCorral2020}  Cardona, D. Del Corral, C. The Dixmier trace and the noncommutative residue for multipliers on compact manifolds. Advances in harmonic analysis and partial differential equations, 121--163, Trends Math., Birkh\"auser/Springer, Cham, 2020.

\bibitem{CardonaDelCorral2020K} Cardona, D. Del Corral, C. Kumar, V. Dixmier traces for discrete pseudo-differential operators. J. Pseudo-Differ. Oper. Appl. 11 (2020), no. 2, 647--656.



\bibitem{CardonaDelgadoRuzhanksyLOcalWeyl}  Cardona, D., Delgado, J., Ruzhansky, M. A note on the local Weyl formula on compact Lie groups, submitted, arXiv:2210.00311.

\bibitem{CardonaDelgadoRuzhansky2022} Cardona, D., Delgado, J., Ruzhansky, M. Determinants and Plemelj-Smithies formulas, Monatsh. Math., Vol. (199), 459–482, (2022).

\bibitem{CardonaKumar2019} Cardona, D. Kumar, V. Lp-boundedness and Lp-nuclearity of multilinear pseudo-differential operators on Zn and the torus Tn. J. Fourier Anal. Appl. 25 (2019), no. 6, 2973--3017. 



\bibitem{CardonaKumar2021}  Cardona, D. Kumar, V. The nuclear trace of periodic vector-valued pseudo-differential operators with applications to index theory. Math. Nachr. 294 (2021), no. 9, 1657--1683. 

\bibitem{CardonaRuzhanksySubellipticCalculus} Cardona, D., Ruzhansky, M. Subelliptic pseudo-differential operators and Fourier integral operators on compact Lie groups, arXiv:2008.09651.


\bibitem{Chatzakou2022} Chatzakou, M. A note on spectral multipliers on Engel and Cartan groups. Proc. Amer. Math. Soc. 150 (2022), no. 5, 2259–2270.

\bibitem{ChatzakouDelgadoRuzhansky2021} Chatzakou, M. Delgado, J. Ruzhansky, M. On a class of anharmonic oscillators. J. Math. Pures Appl. (9) 153 (2021), 1--29. 

\bibitem{ChatzakouDelgadoRuzhansky2022} Chatzakou, M. Delgado, J. Ruzhansky, M. On a class of anharmonic oscillators II. General case. Bull. Sci. Math. 180 (2022), Paper No. 103196, 22 pp.




\bibitem{CoriascoToft2014} Coriasco, S. Toft, J. Asymptotic expansions for Hörmander symbol classes in the calculus of pseudo-differential operators. J. Pseudo-Differ. Oper. Appl. 5 (2014), no. 1, 27--41.

\bibitem{Cowling} Cowling, M. G., Martini, A., M\"uller, D., Parcet, J. The Hausdorff--Young inequality on Lie groups. Math. Ann. 375 (2019), 93--131.

\bibitem{Daubechis1980} Daubechies L., On the distributions corresponding to bounded operators in the Weyl
quantization, Comm. Math. Phys. 75 (1980), 229--238.


\bibitem{Delgado2013}  Delgado, J. Trace formulas for nuclear operators in spaces of Bochner integrable functions. Monatsh. Math. 172 (2013), no. 3-4, 259--275.




\bibitem{Delgado2015} Delgado, J. On the r-nuclearity of some integral operators on Lebesgue spaces. Tohoku Math. J. (2) 67 (2015), no. 1, 125--135.


\bibitem{Delgado2022} Delgado, J. A Poincaré determinant on the torus. J. Pseudo-Differ. Oper. Appl. 13 (2022), no. 3, Paper No. 29, 13 pp. 

\bibitem{DelgadoRuzhansky2014} Delgado, J. Ruzhansky, M. Schatten classes on compact manifolds: kernel conditions. J. Funct. Anal. 267 (2014), no. 3, 772--798.

\bibitem{DelgadoRuzhanskyWang}  Delgado, J. Ruzhansky, M. Wang, B. Approximation property and nuclearity on mixed-norm Lp, modulation and Wiener amalgam spaces. J. Lond. Math. Soc. (2) 94 (2016), no. 2, 391--408.

\bibitem{DelgadoRuzhanskyWang2}  Delgado, J. Ruzhansky, M. Wang, B. Grothendieck-Lidskii trace formula for mixed-norm and variable Lebesgue spaces. J. Spectr. Theory 6 (2016), no. 4, 781–791


\bibitem{DelgadoRuzhanskyTokmagambetov2017}  Delgado, J. Ruzhansky, M. Tokmagambetov, N. Schatten classes, nuclearity and nonharmonic analysis on compact manifolds with boundary. J. Math. Pures Appl. (9) 107 (2017), no. 6, 758–783.

\bibitem{DelgadoRuzhansky2018MS} Delgado, J. Ruzhansky, M. The bounded approximation property of variable Lebesgue spaces and nuclearity. Math. Scand. 122 (2018), no. 2, 299--319.

\bibitem{DelRuzTrace1} Delgado, J., Ruzhansky, M.  Fourier multipliers, symbols, and nuclearity on compact manifolds. J. Anal. Math. 135, no. 2, pp. 757--800, (2018).

\bibitem{DelgadoRuzhansky2021}  Delgado, J. Ruzhansky, M. Schatten-von Neumann classes of integral operators. J. Math. Pures Appl. (9) 154 (2021), 1--29.

\bibitem{DelRuzTrace11}  Delgado, J., Ruzhansky, M.  Schatten classes and traces on compact groups. Math. Res. Lett. 24, no. 4, pp. 979--1003, (2017).


\bibitem{DelRuzTrace111}  Delgado, J., Ruzhansky, M.   Kernel and symbol criteria for Schatten classes and r-nuclearity on compact manifolds. C. R. Math. Acad. Sci. Paris 352, no. 10, pp. 779--784, (2014).

\bibitem{DelRuzTrace1111}   Delgado, J., Ruzhansky, M. $L^p$-nuclearity, traces, and Grothendieck-Lidskii formula on compact Lie groups. J. Math. Pures Appl. (9) 102, no. 1, pp. 153--172, (2014).



\bibitem{FischerRuzhanskyBook} Fischer, V., Ruzhansky, M. Quantization on nilpotent Lie groups, Progress in Mathematics, Vol. 314, Birkhauser, 2016. xiii+557pp.

\bibitem{GrochenigHeil} Gr\"ochenig, K. H.,  Heil C. Modulation spaces and pseudo-differential operators,
Integr. Equ. Op. Theory 34 (1999), no. 4, 439--457.


\bibitem{Hormander1979}  H\"ormander, L.  On the asymptotic distribution of the eigenvalues of pseudodifferential operators in $\mathbb{R}^n.$ Ark. Mat.17 (1979), 297--313.


\bibitem{Hormander1979B} H\"ormander, L. The Weyl calculus of pseudo-differential operators, Comm. Pure Appl. Math.
32 (1979), 359--443.


\bibitem{Hormander1985III}   H\"ormander, L. { The analysis of the linear partial differential operators,} Vol. III-IV. Springer-Verlag, (1985).


\bibitem{Robert1978} Robert, D., Propri\'et\'es spectrales d'op\'erateurs pseudo-diff\'erentiels, Commun. Partial. Differ. Equ.  3 (1978), 755--826. 


\bibitem{Russo77} Russo, B. On the Hausdorff-Young theorem for integral operators, Pac. J. Math. 68 (1) (1977) 241–253.

\bibitem{Ruz}  Ruzhansky, M.,  Turunen, V. {Pseudo-differential Operators and Symmetries: Background Analysis and Advanced Topics } Birkh\"auser-Verlag, Basel, 2010.

\bibitem{RuzhanskyTurunen2011} Ruzhansky, M., Turunen, V. { Sharp G\r{a}rding inequality on compact Lie groups,} J. Funct. Anal. 260 (2011), 2881--2901.

\bibitem{RuzhanskyTurunenWirth2014} Ruzhansky, M.,  Turunen, V., Wirth, J.  H\"ormander class of pseudo-differential operators on compact Lie groups and global hypoellipticity, J. Fourier Anal. Appl. 20  (2014), 476--499.

\bibitem{RuzhanskyWirth2014} Ruzhansky M., Wirth J., Global functional calculus for operators on compact Lie groups, J. Funct. Anal., 267 (2014), 144-172. 


\bibitem{RuzhanskyWirth2015} Ruzhansky, M., Wirth, J. $L^p$ Fourier multipliers on compact Lie groups, Math. Z., 280, pp. 621--642, (2015).


\bibitem{seeley}  Seeley, R. T. Complex powers of an elliptic operator. 1967 Singular Integrals (Proc. Sympos. Pure Math., Chicago, Ill., 1966) pp. 288–307 Amer. Math. Soc., Providence, R.I.

\bibitem{Simon1992} Simon, B. The Weyl transform and Lp functions on phase space, Proc. Amer. Math.
Soc. 116 (1992), 1045--1047.

\bibitem{Sjostrand1994} Sj\"ostrand J., Wiener type algebras of pseudodifferential operators, Seminaire Equations aux Derivees Partielles, Ecole Polytechnique, 1994/1995, Expose no IV.

\bibitem{Toft2006} Toft, J. Schatten-von Neumann properties in the Weyl calculus, and calculus of metrics on symplectic vector spaces. Ann. Global Anal. Geom. 30 (2006), no. 2, 169--209.

\bibitem{Toft2007}  Toft, J. Continuity and Schatten properties for pseudo-differential operators on modulation spaces. Modern trends in pseudo-differential operators, 173--206, Oper. Theory Adv. Appl., 172, Birkhäuser, Basel, 2007.

\bibitem{Toft2007II}  Toft, J. Continuity and Schatten properties for Toeplitz operators on modulation spaces. Modern trends in pseudo-differential operators, 313–328, Oper. Theory Adv. Appl., 172, Birkhäuser, Basel, 2007. 

\bibitem{Toft2008} Toft, J. Schatten properties for pseudo-differential operators on modulation spaces. Pseudo-differential operators, 175--202, Lecture Notes in Math., 1949, Springer, Berlin, 2008. 

\bibitem{Toft2019} Toft, J. Schatten properties, nuclearity and minimality of phase shift invariant spaces. Appl. Comput. Harmon. Anal. 46 (2019), no. 1, 154--176.


\bibitem{seeley} Seeley,  R. T. Complex powers of an elliptic operator. 1967 Singular Integrals (Proc. Sympos. Pure Math., Chicago, Ill., 1966) pp. 288--307, Amer. Math. Soc., Providence, R. I.


\end{thebibliography}

\end{document}